\definecolor{black}{rgb}{0,0,0}\color{black}
\icmltitlerunning{Exploiting Strong Convexity from Data with Primal-Dual First-Order Algorithms}
\newcommand{\argmin}{\mathop{\mathrm{arg\,min}{}}}
\newcommand{\argmax}{\mathop{\mathrm{arg\,max}{}}}
\newcommand{\prox}{\mathrm{prox}}
\newcommand{\eqdef}{\stackrel{\mathrm{def}}{=}}
\newcommand{\reals}{\mathbb{R}}
\newcommand{\R}{\mathbb{R}}
\newcommand{\E}{\mathbb{E}}
\newcommand{\lambdamin}{\lambda_\mathrm{min}}
\newcommand{\lambdamax}{\lambda_\mathrm{max}}
\newcommand{\Romannum}[1]{\uppercase\expandafter{\romannumeral#1}}
\newcommand{\Dini}{{\Delta^{(0)}}}
\newcommand{\Dt}{{\Delta^{(t)}}}
\newcommand{\Dtp}{{\Delta^{(t+1)}}}
\newcommand{\xini}{{x^{(0)}}}
\newcommand{\xopt}{{x^\star}}
\newcommand{\xhat}{\hat{x}}
\newcommand{\xtld}{\tilde{x}}
\newcommand{\xbar}{\bar{x}}
\newcommand{\xt}{{x^{(t)}}}
\newcommand{\xtp}{{x^{(t+1)}}}
\newcommand{\xtm}{{x^{(t-1)}}}
\newcommand{\xtldini}{\tilde{x}^{(0)}}
\newcommand{\xtldt}{\tilde{x}^{(t)}}
\newcommand{\xtldtp}{\tilde{x}^{(t+1)}}
\newcommand{\yini}{{y^{(0)}}}
\newcommand{\yopt}{{y^\star}}
\newcommand{\yhat}{\hat{y}}
\newcommand{\ytld}{\tilde{y}}
\newcommand{\ybar}{\bar{y}}
\newcommand{\yt}{{y^{(t)}}}
\newcommand{\ytp}{{y^{(t+1)}}}
\newcommand{\uopt}{{u^\star}}
\newcommand{\ut}{{u^{(t)}}}
\newcommand{\utp}{{u^{(t+1)}}}
\newcommand{\vini}{{v^{(0)}}}
\newcommand{\vt}{{v^{(t)}}}
\newcommand{\vtp}{{v^{(t+1)}}}
\newcommand{\Lagr}{\mathcal{L}}
\newcommand{\Bdiv}{\mathcal{D}}
\newcommand{\Bdivi}{\mathcal{D}_i}
\newcommand{\yiini}{{y_i^{(0)}}}
\newcommand{\yiopt}{{y_i^\star}}
\newcommand{\yitld}{\tilde{y}_i}
\newcommand{\yit}{{y_i^{(t)}}}
\newcommand{\yitp}{{y_i^{(t+1)}}}
\newcommand{\ykt}{{y_k^{(t)}}}
\newcommand{\yktp}{{y_k^{(t+1)}}}
\newcommand{\viini}{{v_i^{(0)}}}
\newcommand{\vit}{{v_i^{(t)}}}
\newcommand{\vitp}{{v_i^{(t+1)}}}
\newcommand{\vkt}{{v_k^{(t)}}}
\newcommand{\vktp}{{v_k^{(t+1)}}}
\newtheorem{lemma}{Lemma}
\newtheorem{theorem}{Theorem}
\newtheorem{assump}{Assumption}
\begin{document} 

\twocolumn[
\icmltitle{Exploiting Strong Convexity from Data with Primal-Dual\\ 
           First-Order Algorithms}

% It is OKAY to include author information, even for blind
% submissions: the style file will automatically remove it for you
% unless you've provided the [accepted] option to the icml2017
% package.

% list of affiliations. the first argument should be a (short)
% identifier you will use later to specify author affiliations
% Academic affiliations should list Department, University, City, Region, Country
% Industry affiliations should list Company, City, Region, Country

% you can specify symbols, otherwise they are numbered in order
% ideally, you should not use this facility. affiliations will be numbered
% in order of appearance and this is the preferred way.
%\icmlsetsymbol{equal}{*}

\begin{icmlauthorlist}
\icmlauthor{Jialei Wang}{uch}
\icmlauthor{Lin Xiao}{msr}
\end{icmlauthorlist}

\icmlaffiliation{uch}{Department of Computer Science, 
                      The University of Chicago, 
                      Chicago, Illinois 60637, USA.} 
\icmlaffiliation{msr}{Microsoft Research, Redmond, Washington 98052, USA}

\icmlcorrespondingauthor{Jialei Wang}{jialei@uchicago.edu}
\icmlcorrespondingauthor{Lin Xiao}{lin.xiao@microsoft.com}

% You may provide any keywords that you 
% find helpful for describing your paper; these are used to populate 
% the "keywords" metadata in the PDF but will not be shown in the document
\icmlkeywords{boring formatting information, machine learning, ICML}

\vskip 0.3in
]

% this must go after the closing bracket ] following \twocolumn[ ...

% This command actually creates the footnote in the first column
% listing the affiliations and the copyright notice.
% The command takes one argument, which is text to display at the start of the footnote.
% The \icmlEqualContribution command is standard text for equal contribution.
% Remove it (just {}) if you do not need this facility.

\printAffiliationsAndNotice{}  % leave blank if no need to mention equal contribution
%\printAffiliationsAndNotice{\icmlEqualContribution} % otherwise use the standard text.
%\footnotetext{hi}

\begin{abstract} 
We consider empirical risk minimization of linear predictors 
with convex loss functions.
Such problems can be reformulated as convex-concave saddle point problems,
and thus are well suitable for primal-dual first-order algorithms.
However, primal-dual algorithms often require explicit strongly convex 
regularization in order to obtain fast linear convergence, and the required
dual proximal mapping may not admit closed-form or efficient solution. 
In this paper, we develop both batch and randomized primal-dual algorithms 
that can exploit strong convexity from data adaptively
and are capable of achieving linear convergence even without regularization.
We also present dual-free variants of the adaptive primal-dual algorithms 
that do not require computing the dual proximal mapping, 
which are especially suitable for logistic regression.
\end{abstract} 

\section{Introduction}

We consider the problem of regularized empirical risk minimization (ERM) 
of linear predictors.
Let $a_1, \ldots, a_n\in\R^d$ be the feature vectors of~$n$ data samples,
$\phi_i:\R\to\R$ be a convex loss function associated with 
the linear prediction $a_i^T x$, for $i=1,\ldots,n$,
and~$g:\R^d\to\R$ be a convex regularization function 
for the predictor~$x\in\R^d$.
ERM amounts to solving the following convex optimization problem:
\begin{align}\label{eqn:erm-primal}
  \min_{x\in\R^d} \quad 
  \textstyle
  \left\{ P(x) \eqdef \frac{1}{n} \sum_{i=1}^n \phi_i(a_i^{T}x)+ g(x) \right\} .
\end{align}
Examples of the above formulation 
include many well-known classification and regression problems.
For binary classification, 
each feature vector $a_i$ is associated with a label $b_i\in\{\pm 1\}$.
%We obtain the linear SVM (support vector machine)
%by setting $\phi_i(z)=\max\{0, 1-b_i z\}$ (the hinge loss) 
%and $g(x) = (\lambda/2)\|x\|_2^2$,
%where $\lambda>0$ is a regularization parameter.
In particular logistic regression is obtained by setting 
$\phi_i(z)=\log(1+\exp(-b_i z))$. 
For linear regression problems, each feature vector $a_i$ is associated with 
a dependent variable $b_i\in\R$, and $\phi_i(z)=(1/2)(z-b_i)^2$.
Then we get ridge regression with $g(x) = (\lambda/2)\|x\|_2^2$,
and elastic net with $g(x)=\lambda_1 \|x\|_1+(\lambda_2/2)\|x\|_2^2$.
%Further backgrounds on regularized ERM in machine learning and statistics 
%can be found, e.g., in the book by \citet{HTFbook}.

Let $A=[a_1,\ldots,a_n]^T$ be the data matrix.
Throughout this paper, we make the following assumptions:
\begin{assump}\label{asmp:erm}
The functions~$\phi_i$, $g$ and matrix~$A$ satisfy:
\vspace{-2ex}
\begin{itemize} \itemsep 0pt
  \item Each $\phi_i$ is $\delta$-strongly convex and $1/\gamma$-smooth
    where $\gamma>0$ and $\delta\geq 0$, and $\gamma\delta\leq 1$;
  \item $g$ is $\lambda$-strongly convex where $\lambda \geq 0$;
  \item $\lambda+\delta\mu^2>0$, where $\mu = \sqrt{\lambdamin(A^T A)}$.
\end{itemize}
\vspace{-2ex}
\end{assump}
The strong convexity and smoothness mentioned above are with respect to
the standard Euclidean norm, denoted as $\|x\| = \sqrt{x^T x}$.
(See, e.g., \citet[][Sections~2.1.1 and 2.1.3]{Nesterov2004book} for 
the exact definitions.)
Let $R=\max_i\{\|a_i\|\}$ and assuming $\lambda>0$, then $R^2/(\gamma\lambda)$
is a popular definition of condition number for analyzing complexities
of different algorithms.
The last condition above means that the primal objective function $P(x)$ 
is strongly convex, even if $\lambda=0$.

There have been extensive research activities in recent years on developing
efficiently algorithms for solving problem~\eqref{eqn:erm-primal}.
A broad class of randomized algorithms that exploit the finite sum 
structure in the ERM problem have emerged as very competitive
both in terms of theoretical complexity and practical performance.
They can be put into three categories: primal, dual, and primal-dual.

Primal randomized algorithms work with the ERM problem~\eqref{eqn:erm-primal}
directly. They are modern versions of randomized incremental gradient 
methods \citep[e.g.,][]{Bertsekas2012IncrementalChapter,Nedic} 
equipped with variance reduction
techniques. 
Each iteration of such algorithms only process one data point $a_i$
with complexity $O(d)$.
They includes 
SAG \citep{roux2012stochastic}, 
SAGA \citep{defazio2014saga}, and
SVRG \citep{johnson2013accelerating,xiaozhang2014proxsvrg},
which all achieve the iteration complexity 
$O\left( (n+R^2/(\gamma\lambda))\log(1/\epsilon) \right)$
to find an $\epsilon$-optimal solution.
In fact, they are capable of exploiting the strong convexity
from data, meaning that the condition number $R^2/(\gamma\lambda)$ in 
the complexity can be replaced by the more favorable one
$R^2/(\gamma(\lambda+\delta\mu^2/n))$.
This improvement can be achieved without explicit knowledge of $\mu$
from data.

Dual algorithms solve Fenchel dual of~\eqref{eqn:erm-primal}
by maximizing
\begin{equation}\label{eqn:erm-dual}
  \textstyle
  D(y) \eqdef 
    \frac{1}{n}\sum_{i=1}^n -\phi_i^*(y_i)
    - g^*\left(-\frac{1}{n} \sum_{i=1}^n y_i a_i\right)
\end{equation}
using randomized coordinate ascent algorithms.
(Here $\phi_i^*$ and $g^*$ denotes the conjugate functions of $\phi_i$ and $g$.)
They include SDCA \citep{SDCA}, \citet{Nesterov} and \citet{RichtarickTakac}.
They have the same complexity
$O\left( (n+R^2/(\gamma\lambda))\log(1/\epsilon) \right)$,
but are hard to exploit strong convexity from data.

Primal-dual algorithms solve the convex-concave saddle point problem
$\min_{x} \max_{y} \Lagr(x,y)$ where
\begin{align}\label{eqn:erm-saddle}
\textstyle
\Lagr(x,y) \eqdef 
\frac{1}{n}\sum_{i=1}^n 
\bigl( y_i \langle a_i, x \rangle - \phi^*_i(y_i) \bigr) + g(x) .
\end{align}
In particular, SPDC \citep{ZhangXiao2015SPDC} achieves an accelerated 
linear convergence rate with iteration complexity
$O\left((n+\sqrt{n}R/\sqrt{\gamma\lambda})\log(1/\epsilon)\right)$,
which is better than the aforementioned non-accelerated complexity
when $R^2/(\gamma\lambda)>n$.
\citet{LanZhou2015RPDG} developed dual-free variants of accelerated primal-dual
algorithms, but without considering the linear predictor structure in ERM. 
\citet{BalamuruganBach2016saddlepoint} extended SVRG and SAGA to solving 
saddle point problems.

Accelerated primal and dual randomized algorithms have also 
been developed.
\citet{Nesterov}, \citet{FercoqRichtarik2015} and \citet{LinLuXiao2015} 
developed accelerated coordinate gradient algorithms, 
which can be applied to solve the dual problem~\eqref{eqn:erm-dual}.
\citet{allen2016katyusha} developed an accelerated variant of SVRG.
Acceleration can also be obtained using the Catalyst framework
\citep{lin2015universal}.
They all achieve the same
$O\left((n+\sqrt{n}R/\sqrt{\gamma\lambda})\log(1/\epsilon)\right)$
complexity.
A common feature of accelerated algorithms 
is that they require good estimate of the strong convexity parameter.
This makes hard for them to exploit strong convexity from data because 
the minimum singular value~$\mu$ of the data matrix~$A$ is very hard to 
estimate in general.

In this paper, we show that primal-dual algorithms are capable of exploiting
strong convexity from data if the algorithm parameters (such as step sizes) 
are set appropriately. While these optimal setting depends on the knowledge 
of the convexity parameter~$\mu$ from the data, we develop adaptive variants
of primal-dual algorithms that can tune the parameter automatically.
Such adaptive schemes rely critically on the capability of evaluating
the primal-dual optimality gaps by primal-dual algorithms.

A major disadvantage of primal-dual algorithms is that the required
dual proximal mapping may not admit closed-form or efficient solution. 
We follow the approach of \citet{LanZhou2015RPDG} to
derive dual-free variants of the primal-dual algorithms customized for ERM
problems with the linear predictor structure, and show that they can also
exploit strong convexity from data with correct choices of parameters
or using an adaptation scheme.

\section{Batch primal-dual algorithms}
\label{sec:BPD}

Before diving into randomized primal-dual algorithms, we first consider
batch primal-dual algorithms, which exhibit similar properties as their
randomized variants.
To this end,
we consider a ``batch'' version of the ERM problem~\eqref{eqn:erm-primal},
\begin{equation}\label{eqn:batch-primal}
  \textstyle
  \min_{x\in\reals^d} ~ \bigl\{ P(x) \eqdef f(Ax) +  g(x) \bigr\}.
\end{equation}
where $A\in\reals^{n\times d}$, and make the following assumption: 
\begin{assump}\label{asmp:batch-PD}
The functions~$f$, $g$ and matrix~$A$ satisfy:
\vspace{-2ex}
\begin{itemize} \itemsep 0pt
  \item $f$ is $\delta$-strongly convex and $1/\gamma$-smooth
    where $\gamma>0$ and $\delta\geq 0$, and $\gamma\delta\leq 1$;
  \item $g$ is $\lambda$-strongly convex where $\lambda \geq 0$;
  \item $\lambda+\delta\mu^2>0$, where $\mu = \sqrt{\lambdamin(A^T A)}$.
\end{itemize}
\vspace{-2ex}
\end{assump}
For exact correspondence with problem~\eqref{eqn:erm-primal}, 
we have $f(z) = \frac{1}{n}\sum_{i=1}^n \phi_i(z_i)$ with $z_i=a_i^T x$.
Under Assumption~\ref{asmp:erm}, the function~$f(z)$ is  
$\delta/n$-strongly convex and $1/(n\gamma)$-smooth, 
and $f(Ax)$ is $\delta\mu^2/n$-strongly convex and $R^2/\gamma$-smooth.
However, such correspondences alone 
are not sufficient to exploit the structure of~\eqref{eqn:erm-primal},
i.e., substituting them into the batch algorithms of this section
will not produce the efficient algorithms for solving 
problem~\eqref{eqn:erm-primal} that we will present in 
Sections~\ref{sec:Ada-SPDC} and~\ref{sec:DF-SPDC}.
So we do not make such correspondences explicit in this section.
Rather, treat them as independent assumptions with the same notation.

\begin{algorithm}[tb]
   \caption{Batch Primal-Dual (BPD) Algorithm}
   \label{alg:BPD}
\begin{algorithmic}
   \REQUIRE parameters $\tau$, $\sigma$, $\theta$, 
          initial point $(\xtldini=\xini,\yini)$
   %\STATE Set $\xtldini=\xini$
   \FOR{$t=0,1,2,\ldots$}
   \STATE $\ytp = \prox_{\sigma f^*}\left(\yt+\sigma A \xtldt\right)$
   \STATE $\xtp = \prox_{\tau g}\left(\xt-\tau A^T \ytp\right)$
   \STATE $\xtldtp = \xtp + \theta(\xtp-\xt)$
   \ENDFOR
\end{algorithmic}
\end{algorithm}

Using conjugate functions, we can derive the dual of~\eqref{eqn:batch-primal} as
\begin{equation}\label{eqn:batch-dual}
  \textstyle
  \max_{y\in\reals^n} ~\bigl\{D(y) \eqdef -f^*(y) - g^*(-A^T y) \bigr\},
\end{equation}
and the convex-concave saddle point formulation is
\begin{equation} \label{eqn:batch-saddle}
  \min_{x\in\reals^d} \, \max_{y\in\reals^n} \,
  \bigl\{ \Lagr(x,y) \eqdef g(x) + y^T Ax -f^*(y) \bigr\}.
\end{equation}
We consider the primal-dual first-order algorithm proposed by
\citet{ChambollePock2011PrimalDual,ChambollePock2016Ergodic}
for solving the saddle point problem~\eqref{eqn:batch-saddle}, 
which is given as Algorithm~\ref{alg:BPD}.
Here we call it the batch primal-dual (BPD) algorithm.
Assuming that~$f$ is smooth and~$g$ is strongly convex,
\citet{ChambollePock2011PrimalDual,ChambollePock2016Ergodic}
showed that Algorithm~\ref{alg:BPD} achieves accelerated linear
convergence rate if $\lambda>0$.
%with appropriate choice of parameters $\tau$, $\sigma$ and $\theta$.
However, they did not consider the case where additional or the sole source of
strong convexity comes from $f(Ax)$.

In the following theorem, we show how to set the parameters  
$\tau$, $\sigma$ and $\theta$ to exploit both sources of strong convexity
to achieve fast linear convergence.
 
\begin{theorem}\label{thm:BPD-convergence}
  Suppose Assumption~\ref{asmp:batch-PD} holds and $(\xopt,\yopt)$ is
  the unique saddle point of~$\Lagr$ defined in~\eqref{eqn:batch-saddle}.
  Let $L = \|A\| = \sqrt{\lambdamax(A^T A)}$.
  If we set the parameters in Algorithm~\ref{alg:BPD} as
\begin{equation}\label{eqn:batch-tau-sigma}
\textstyle 
\sigma=\frac{1}{L}\sqrt{\frac{\lambda+\delta\mu^2}{\gamma}}, \quad
\tau=\frac{1}{L}\sqrt{\frac{\gamma}{\lambda+\delta\mu^2}}, 
\end{equation}
and $\theta=\max\{\theta_x, \theta_y\}$ where
\begin{equation}\label{eqn:batch-theta}
\textstyle
  \theta_x = \Bigl(1-\frac{\delta}{(\delta+2\sigma)}\frac{\mu^2}{L^2}\Bigr)
             \frac{1}{1+\tau\lambda}, \quad
  \theta_y = \frac{1}{1+\sigma\gamma/2},
\end{equation}
then we have
\begin{align*}
\textstyle
\left(\frac{1}{2\tau}+\frac{\lambda}{2}\right)\|\xt-\xopt\|^2
  +\frac{\gamma}{4}\|\yt-\yopt\|^2   &\leq \theta^t C, \\
\textstyle
  \Lagr(\xt,\yopt) - \Lagr(\xopt,\yt) &\leq \theta^t C,
\end{align*}
where
$
C = \left(\frac{1}{2\tau}\!+\!\frac{\lambda}{2}\right)\|\xini-\xopt\|^2
  +\left(\frac{1}{2\sigma}\!+\!\frac{\gamma}{4}\right)
  \|\yini-\yopt\|^2.
$
\end{theorem}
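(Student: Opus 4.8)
The plan is to follow the primal-dual analysis of Chambolle and Pock, but to inject the hidden strong convexity $\delta\mu^2$ through the smoothness of $f^*$ rather than through the objective gap, which cannot see it. First I would record the two first-order inequalities produced by the proximal steps. Since $f$ is $1/\gamma$-smooth, $f^*$ is $\gamma$-strongly convex, so the update $\ytp=\prox_{\sigma f^*}(\yt+\sigma A\xtldt)$, evaluated against $y=\yopt$, yields via the three-point identity an inequality whose left-hand side is $f^*(\ytp)-f^*(\yopt)+\langle\yopt-\ytp,A\xtldt\rangle$ and whose right-hand side collects the telescoping pair $\frac{1}{2\sigma}\|\yopt-\yt\|^2-\frac{1}{2\sigma}\|\yopt-\ytp\|^2$, the slack $-\frac{1}{2\sigma}\|\ytp-\yt\|^2$, and the strong-convexity gain $-\frac{\gamma}{2}\|\yopt-\ytp\|^2$. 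The $\lambda$-strong convexity of $g$ gives the analogous inequality for the $x$-update at $x=\xopt$. Adding the two and regrouping the bilinear terms isolates the nonnegative saddle-point gap $\Lagr(\xtp,\yopt)-\Lagr(\xopt,\ytp)$ together with a coupling remainder in which the extrapolated iterate $\xtldt=\xt+\theta(\xt-\xtm)$ appears in place of $\xtp$.

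The crucial and non-standard step is to surface $\delta\mu^2$: as a function of $x$, $\Lagr(\cdot,\yopt)$ is only $\lambda$-strongly convex, so the $\delta\mu^2$ strong convexity of $f(Ax)$ is entirely buried in the coupling. My plan is to exploit that $f^*$ is additionally $1/\delta$-smooth, hence $\nabla f^*$ is $\delta$-co-coercive: $\langle\nabla f^*(\ytp)-\nabla f^*(\yopt),\ytp-\yopt\rangle\ge\delta\|\nabla f^*(\ytp)-\nabla f^*(\yopt)\|^2$. The prox optimality condition gives $\nabla f^*(\ytp)=A\xtldt+\frac{1}{\sigma}(\yt-\ytp)$ while $\nabla f^*(\yopt)=A\xopt$, so the right-hand side contains $\|A(\xtldt-\xopt)+\frac{1}{\sigma}(\yt-\ytp)\|^2$. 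Lower-bounding this by a Young's-inequality split between the $A(\xtldt-\xopt)$ part and the $\frac{1}{\sigma}(\yt-\ytp)$ part, then using $\|Av\|^2\ge\mu^2\|v\|^2$, produces a term proportional to $\delta\mu^2\|\xtldt-\xopt\|^2$; the weight $\frac{\delta}{\delta+2\sigma}$ in $\theta_x$ is precisely the optimal constant in this split, so the co-coercivity inequality contributes an extra $\frac{\delta}{\delta+2\sigma}\frac{\mu^2}{L^2}$-sized contraction on the $x$-distance.

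The remaining work is the standard coupling/telescoping. I would substitute $\xtldt=\xt+\theta(\xt-\xtm)$ into the bilinear remainder, rewrite it as a telescoping difference of terms $\langle A(\xtp-\xopt),\ytp-\yopt\rangle$ minus $\theta$ times the previous one, plus a cross term $\theta\langle A(\xt-\xtm),\ytp-\yt\rangle$ bounded by Young's inequality. Using $\sigma\tau=1/L^2$ and $\|A\|=L$, this cross term is absorbed into the slacks $\frac{1}{2\tau}\|\xt-\xtm\|^2$ and $\frac{1}{2\sigma}\|\ytp-\yt\|^2$. Assembling everything gives a one-step recursion for the Lyapunov quantity $(\frac{1}{2\tau}+\frac{\lambda}{2})\|\xt-\xopt\|^2+(\frac{1}{2\sigma}+\frac{\gamma}{4})\|\yt-\yopt\|^2$, in which the $x$-coefficient contracts by $\frac{1}{1+\tau\lambda}$ times the data factor (yielding $\theta_x$) and the $y$-coefficient contracts by $\frac{1}{1+\sigma\gamma/2}$ (yielding $\theta_y$, using half of the $\gamma$-gain for the contraction and reserving the other half to absorb the co-coercivity leftover). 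Taking $\theta=\max\{\theta_x,\theta_y\}$ makes the recursion a factor $\le\theta$, and induction gives $\theta^t C$; the distance bound follows by discarding the nonnegative gap, and the gap bound by retaining it instead.

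The main obstacle I anticipate is the second paragraph: extracting $\delta\mu^2$ from the co-coercivity of $\nabla f^*$ while the extrapolated iterate and the dual increment $\frac{1}{\sigma}(\yt-\ytp)$ are entangled inside one squared norm, and performing the Young split so the leftover negative $y$-terms are still dominated by the available slack $\frac{1}{2\sigma}\|\ytp-\yt\|^2$ and the reserved strong convexity. Getting the constant $\frac{\delta}{\delta+2\sigma}$ and the asymmetric $\gamma/4$ weighting to match the claimed $\theta_x,\theta_y$ exactly — rather than merely up to constants — is the delicate bookkeeping, since the co-coercivity gain, the $\lambda$- and $\gamma$-contractions, and the coupling cross-term bound must all be balanced simultaneously against the single parameter $\theta$.
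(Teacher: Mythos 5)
Your plan is the paper's own proof: the three-point inequalities for the two prox steps with a $\rho=1/2$ split of the dual step between the $\gamma$-strong convexity of $f^*$ (keeping $\gamma/4$ for the contraction) and its $1/\delta$-smoothness (producing $\frac{\delta}{4}\|\nabla f^*(\yopt)-\nabla f^*(\ytp)\|^2$), the substitution of the prox optimality condition $\nabla f^*(\ytp)=A\xtldt+\frac{1}{\sigma}(\yt-\ytp)$ and $\nabla f^*(\yopt)=A\xopt$ to surface $\mu^2$, the telescoping cross term, the constraint $\tau\sigma=1/L^2$, and the Lyapunov recursion with $\theta=\max\{\theta_x,\theta_y\}$. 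All of that matches Appendices B and C.

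There is, however, one concrete flaw in your second paragraph. You propose to lower-bound $\bigl\|A(\xtldt-\xopt)+\frac{1}{\sigma}(\yt-\ytp)\bigr\|^2$ by a Young split between the $A(\xtldt-\xopt)$ part and the $\frac{1}{\sigma}(\yt-\ytp)$ part, obtaining a gain proportional to $\delta\mu^2\|\xtldt-\xopt\|^2$. That is the wrong quantity: the recursion needs a gain on $\|\xt-\xopt\|^2$, and $\|\xtldt-\xopt\|^2=\|(\xt-\xopt)+\theta(\xt-\xtm)\|^2$ is not bounded below by any multiple of $\|\xt-\xopt\|^2$ — it can be arbitrarily small while $\|\xt-\xopt\|$ is large. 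Converting it would cost a second Young split and would change the constant, so the claim that $\frac{\delta}{\delta+2\sigma}$ is ``precisely the optimal constant in this split'' does not hold for the split as you describe it. The paper instead writes $A(\xopt-\xtldt)=A(\xopt-\xt)-\theta A(\xt-\xtm)$ and splits between $A(\xopt-\xt)$ and the \emph{combined} residual $\theta A(\xt-\xtm)-\frac{1}{\sigma}(\ytp-\yt)$; the point of this grouping is that the resulting negative term $-(\alpha-1)\frac{\delta}{4}\bigl\|\theta A(\xt-\xtm)-\frac{1}{\sigma}(\ytp-\yt)\bigr\|^2$ is exactly the completed square $\frac{\sigma}{2}\bigl\|\theta A(\xt-\xtm)-\frac{1}{\sigma}(\ytp-\yt)\bigr\|^2$ that emerges from combining the dual slack $\frac{1}{2\sigma}\|\ytp-\yt\|^2$ with the cross term $-\theta(\ytp-\yt)^TA(\xt-\xtm)$ (at the cost of $\frac{\sigma\theta^2L^2}{2}\|\xt-\xtm\|^2$, absorbed by the $\frac{1}{2\tau}\|\xt-\xtm\|^2$ slack via $\tau\sigma=1/L^2$). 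Setting $\frac{\sigma}{2}-(\alpha-1)\frac{\delta}{4}=0$ gives $\alpha=1+2\sigma/\delta$, hence $1-1/\alpha=\frac{2\sigma}{2\sigma+\delta}$ and the stated $\theta_x$. With that regrouping your argument goes through; without it the constants do not close.
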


The proof of Theorem~\ref{thm:BPD-convergence} is given in 
Appendices~\ref{sec:convergence_general} and~\ref{sec:analysis-Euclidean}.
Here we give a detailed analysis of the convergence rate.
Substituting $\sigma$ and $\tau$ in~\eqref{eqn:batch-tau-sigma} 
into the expressions for $\theta_y$ and $\theta_x$ in~\eqref{eqn:batch-theta}, 
and assuming $\gamma(\lambda+\delta\mu^2)\ll L^2$,
we have
\begin{align*}
\theta_x &\approx 
\textstyle
1 - \frac{\gamma\delta\mu^2}{L^2}
\Bigl(2\frac{\sqrt{\gamma(\lambda+\delta\mu^2)}}{L} + \gamma\delta\Bigr)^{-1}
  - \frac{\lambda}{L}\sqrt{\frac{\gamma}{\lambda+\delta\mu^2}}, \\
\theta_y &= 
\textstyle
\frac{1}{1+\sqrt{\gamma(\lambda+\delta\mu^2)}/(2L)}
  \approx 1 - \frac{\sqrt{\gamma(\lambda+\delta\mu^2)}}{2L}.
\end{align*}
Since the overall condition number of the problem is 
$\frac{L^2}{\gamma(\lambda+\delta\mu^2)}$,
it is clear that $\theta_y$ is an accelerated convergence rate.
Next we examine $\theta_x$ in two special cases.

\vspace{-1ex}
\paragraph{The case of $\delta\mu^2=0$ but $\lambda>0$.}
In this case, we have
$\tau = \frac{1}{L}\sqrt{\frac{\gamma}{\lambda}}$ and
$\sigma = \frac{1}{L}\sqrt{\frac{\lambda}{\gamma}}$,
and thus
\begin{align*}
  \theta_x \textstyle 
  \!=\!\frac{1}{1+\sqrt{\gamma\lambda}/L} 
  \approx 1 \!-\! \frac{\sqrt{\gamma\lambda}}{L}, \quad
  \theta_y 
  \!=\!\frac{1}{1+\sqrt{\gamma\lambda}/(2L)} 
  \approx 1\! -\! \frac{\sqrt{\gamma\lambda}}{2L}.
\end{align*}
Therefore we have
$
  \theta = \max\{\theta_x, \theta_y\}
  \approx 1 - \frac{\sqrt{\lambda\gamma}}{2L} .
$
This indeed is an accelerated convergence rate, recovering the result of
\citet{ChambollePock2011PrimalDual,ChambollePock2016Ergodic}.

\vspace{-1ex}
\paragraph{The case of $\lambda=0$ but $\delta\mu^2>0$.}
In this case, we have
  $\tau = \frac{1}{L\mu}\sqrt{\frac{\gamma}{\delta}}$ and
  $\sigma = \frac{\mu}{L}\sqrt{\frac{\delta}{\gamma}}$,
and
\begin{align*}
  \theta_x = 
  \textstyle
  1 - \frac{\gamma\delta\mu^2}{L^2}\cdot
  \frac{1}{2\sqrt{\gamma\delta}\mu/L+\gamma\delta}, \quad
  \theta_y
  \approx 1 - \frac{\sqrt{\gamma\delta}\mu}{2L}.
\end{align*}
Notice that 
$\frac{1}{\gamma\delta}\frac{L^2}{\mu^2}$ is the condition number of
$f(Ax)$.
Next we assume $\mu\ll L$ and examine how $\theta_x$ varies with
$\gamma\delta$.
\begin{itemize}\itemsep 0pt
  \item If $\gamma\delta\approx\frac{\mu^2}{L^2}$, meaning $f$ is 
    badly conditioned, then 
    \[
      \textstyle
      \theta_x \approx 1 - \frac{\gamma\delta\mu^2}{L^2}\cdot
      \frac{1}{3\sqrt{\gamma\delta}\mu/L}
      = 1 - \frac{\sqrt{\gamma\delta}\mu}{3L}.
    \]
    Because
    the overall condition number is $\frac{1}{\gamma\delta}\frac{L^2}{\mu^2}$,
    this is an accelerated linear rate, and so is $\theta=\max\{\theta_x,\theta_y\}$.
  \item If $\gamma\delta\approx\frac{\mu}{L}$, meaning $f$ is mildly conditioned, then
    \[
      \textstyle
      \theta_x \approx 1 - \frac{\mu^3}{L^3}
      \frac{1}{2\left(\mu/L\right)^{3/2} + \mu/L}
      \approx 1 - \frac{\mu^2}{L^2} .
    \]
    This represents a half-accelerated rate, because
    the overall condition number is 
    $\frac{1}{\gamma\delta}\frac{L^2}{\mu^2}\approx\frac{L^3}{\mu^3}$.
  \item If $\gamma\delta=1$, i.e., $f$ is a simple quadratic function, then
    \[
      \textstyle
      \theta_x \approx 1 - \frac{\mu^2}{L^2}\frac{1}{2\mu/L + 1}
      \approx 1 - \frac{\mu^2}{L^2} .
    \]
    This rate does not have acceleration, because
    the overall condition number is 
    $\frac{1}{\gamma\delta}\frac{L^2}{\mu^2}\approx\frac{L^2}{\mu^2}$.
\end{itemize}
In summary, the extent of acceleration in the dominating factor $\theta_x$ 
(which determines $\theta$) depends on the relative size of $\gamma\delta$
and $\mu^2/L^2$, i.e., the relative conditioning between the function~$f$
and the matrix~$A$.
In general, we have full acceleration if $\gamma\delta\leq\mu^2/L^2$.
The theory predicts that the acceleration degrades as the function~$f$ gets
better conditioned.
However, in our numerical experiments, 
we often observe acceleration even if $\gamma\delta$ gets closer to 1.

As explained in \citet{ChambollePock2011PrimalDual}, Algorithm~\ref{alg:BPD}
is equivalent to a preconditioned ADMM.
\citet{deng2016global} characterized conditions for ADMM to obtain
linear convergence without assuming both parts of the objective
function being strongly convex, but they 
did not derive convergence rate for this case.

\subsection{Adaptive batch primal-dual algorithms}

\begin{algorithm}[tb]
   \caption{Adaptive Batch Primal-Dual (Ada-BPD)}
   \label{alg:Ada-BPD}
\begin{algorithmic}
  \REQUIRE problem constants $\lambda$, $\gamma$, $\delta$, $L$ 
           and $\hat{\mu}>0$, initial
   \STATE \qquad point $(\xini,\yini)$, and adaptation period $T$.
   \vspace{0.3ex}
   \STATE Compute $\sigma$, $\tau$, and $\theta$ as 
          in~\eqref{eqn:batch-tau-sigma} and~\eqref{eqn:batch-theta} 
          using $\mu=\hat{\mu}$
   \FOR{$t=0,1,2,\ldots$}
   \vspace{0.5ex}
   \STATE $\ytp = \prox_{\sigma f^*}\left(\yt+\sigma A \xtldt\right)$
   \STATE $\xtp = \prox_{\tau g}\left(\xt-\tau A^T \ytp\right)$
   \STATE $\xtldtp = \xtp + \theta(\xtp-\xt)$
   \vspace{0.5ex}
   \IF{$\mbox{mod}(t+1,T)==0$}
   \STATE $(\sigma,\tau,\theta)=\mbox{BPD-Adapt}
           \left(\{P^{(s)},D^{(s)}\}_{s=t-T}^{t+1}\right)$
   \ENDIF
   \ENDFOR
\end{algorithmic}
\end{algorithm}

In practice, it is often very hard to obtain good estimate of
the problem-dependent constants, especially 
$\mu=\sqrt{\lambdamin(A^T A)}$, in order to apply the algorithmic parameters
specified in Theorem~\ref{thm:BPD-convergence}.
Here we explore heuristics that can enable adaptive tuning of such parameters,
which often lead to much improved performance in practice.

A key observation is that the convergence rate of the BPD algorithm
changes monotonically with the overall strong convexity parameter 
$\lambda+\delta\mu^2$, regardless of the extent of acceleration.
In other words, the larger $\lambda+\delta\mu^2$ is, the faster the convergence.
Therefore, if we can monitor the progress of the convergence and compare it
with the predicted convergence rate in Theorem~\ref{thm:BPD-convergence},
then we can adjust the algorithmic parameters to exploit the fastest possible
convergence.
More specifically, if the observed convergence is slower than the predicted
convergence rate, then we should reduce the estimate of $\mu$;
if the observed convergence is better than the predicted rate, then we can
try to increase $\mu$ for even faster convergence.

\begin{algorithm}[tb]
   \caption{BPD-Adapt (simple heuristic)}
   \label{alg:BPD-Adapt-simple}
\begin{algorithmic}
  \REQUIRE previous estimate $\hat{\mu}$, adaption period $T$, primal and%
  \STATE \qquad dual objective values $\{P^{(s)},D^{(s)}\}_{s=t-T}^{t}$
   \vspace{0.5ex}
   \IF{$P^{(t)}-D^{(t)}<\theta^T(P^{(t-T)}-D^{(t-T)})$}
   \STATE $\hat{\mu} := \sqrt{2}\hat{\mu}$
   \ELSE
   \STATE $\hat{\mu} := \hat{\mu}/\sqrt{2}$
   \ENDIF
   \STATE Compute $\sigma$, $\tau$, and $\theta$ as 
          in~\eqref{eqn:batch-tau-sigma} and~\eqref{eqn:batch-theta} 
          using $\mu=\hat{\mu}$
   \ENSURE new parameters $(\sigma, \tau, \theta)$
\end{algorithmic}
\end{algorithm}

We formalize the above reasoning in an Adaptive BPD (Ada-BPD) algorithm
described in Algorithm~\ref{alg:Ada-BPD}.
This algorithm maintains an estimate $\hat{\mu}$ of the true constant~$\mu$,
and adjust it every~$T$ iterations.
We use $P^{(t)}$ and $D^{(t)}$ to represent the primal and dual objective
values at $P(\xt)$ and $D(\yt)$, respectively.
We give two implementations of the tuning procedure BPD-Adapt:
\vspace{-1ex}
\begin{itemize} \itemsep 0pt
  \item
Algorithm~\ref{alg:BPD-Adapt-simple} is a simple heuristic for
    tuning the estimate $\hat{\mu}$, where the increasing and decreasing
    factor $\sqrt{2}$ can be changed to other values larger than 1;
  \item 
Algorithm~\ref{alg:BPD-Adapt-robust} is a more robust heuristic.
    It does not rely on the specific convergence rate $\theta$ established
    in Theorem~\ref{thm:BPD-convergence}. Instead, it simply compares the 
    current estimate of objective reduction rate $\hat{\rho}$ with the 
    previous estimate $\rho$ ($\approx\theta^T$). 
    It also specifies a non-tuning range of changes in~$\rho$, specified
    by the interval $[\underline{c}, \overline{c}]$.
\end{itemize}
\vspace{-1ex}
One can also devise more sophisticated schemes; e.g., 
if we estimate that $\delta\mu^2 < \lambda$, then no more tuning is necessary.

\iffalse
% not this version
The capability of accessing both the primal and dual objective values
allows primal-dual algorithms to have good estimate of the convergence rate,
which enables effective tuning heuristics. 
This is in contrast with pure primal or pure dual algorithms, 
where automatic tuning of the strong convexity parameter may have to
resort to estimating the reduction of the norm of the gradient mapping
(e.g., \cite{LinXiao2015apgHomotopy}).

\fi

The capability of accessing both the primal and dual objective values
allows primal-dual algorithms to have good estimate of the convergence rate,
which enables effective tuning heuristics. 
Automatic tuning of primal-dual algorithms have also been studied by, e.g.,
\citet{malitsky2016first} and
\citet{goldstein2013adaptive},
but with different goals.

Finally, we note that Theorem~\ref{thm:BPD-convergence} only establishes
convergence rate for the distance to the optimal point and the quantity
$\Lagr(\xt,\yopt)-\Lagr(\xopt, \yt)$, which is not quite the duality
gap $P(\xt)-D(\yt)$.
Nevertheless, same convergence rate can also be established for the duality 
gap \citep[see][Section~2.2]{ZhangXiao2015SPDC}, which can be used to
better justify the adaption procedure.
%We leave such rigor for future work.

\begin{algorithm}[tb]
   \caption{BPD-Adapt (robust heuristic)}
   \label{alg:BPD-Adapt-robust}
\begin{algorithmic}
  \REQUIRE previous rate estimate $\rho>0$, $\Delta=\delta\hat{\mu}^2$, period $T$, 
  \STATE \qquad constants $\underline{c}<1$ and $\overline{c}>1$, and $\{P^{(s)},D^{(s)}\}_{s=t-T}^{t}$\!\!\!%
   \vspace{0.5ex}
   \STATE Compute new rate estimate $\hat{\rho} = \frac{P^{(t)}-D^{(t)}}{P^{(t-T)}-D^{(t-T)}}$
   \vspace{0.3ex}
   \IF{$\hat{\rho} \leq \underline{c}\,\rho$}
   \STATE $\Delta := 2\Delta$, \quad~ $\rho:=\hat{\rho}$
   \ELSIF {$\hat{\rho} \geq \overline{c}\,\rho$}
   \STATE $\Delta := \Delta/2$, \quad $\rho:=\hat{\rho}$
   \ELSE
   \STATE $\Delta:=\Delta$
   \ENDIF
   \vspace{0.5ex}
   \STATE $\sigma=\frac{1}{L}\sqrt{\frac{\lambda+\Delta}{\gamma}}$,
          \quad $\tau=\frac{1}{L}\sqrt{\frac{\gamma}{\lambda+\Delta}}$
   \vspace{0.3ex}
   \STATE Compute $\theta$ using~\eqref{eqn:batch-theta} or set $\theta=1$
   \vspace{0.5ex}
   \ENSURE new parameters $(\sigma, \tau, \theta)$
\end{algorithmic}
\end{algorithm}

\section{Randomized primal-dual algorithm}
\label{sec:Ada-SPDC}
 
In this section, we come back to the ERM problem~\eqref{eqn:erm-primal},
which have a finite sum structure that allows the development of
randomized primal-dual algorithms.
In particular, we extend the stochastic primal-dual coordinate (SPDC)
algorithm \citep{ZhangXiao2015SPDC} to exploit the strong convexity
from data in order to achieve faster convergence rate.

First, we show that, by setting algorithmic parameters appropriately, 
the original SPDC algorithm may directly benefit from strong convexity
from the loss function. 
We note that
the SPDC algorithm is a special case of the Adaptive SPDC (Ada-SPDC) algorithm
presented in Algorithm~\ref{alg:Ada-SPDC}, by setting the adaption period
$T=\infty$ (not performing any adaption).
The following theorem is proved in Appendix~\ref{sec:proof_spdc}.

\begin{theorem}\label{thm:SPDC-convergence}
  Suppose Assumption~\ref{asmp:erm} holds. Let $(\xopt,\yopt)$ be
  the saddle point of the function~$\Lagr$ defined in~\eqref{eqn:erm-saddle}, 
  and $R=\max\{\|a_1\|,\ldots,\|a_n\|\}$.
  If we set $T=\infty$ in Algorithm~\ref{alg:Ada-SPDC} (no adaption) 
  and let
\begin{equation}\label{eqn:SPDC-tau-sigma}
\textstyle 
\tau=\frac{1}{4R}\sqrt{\frac{\gamma}{n\lambda+\delta\mu^2}}, \quad 
\sigma=\frac{1}{4R}\sqrt{\frac{n\lambda+\delta\mu^2}{\gamma}},
\end{equation}
and $\theta=\max\{\theta_x, \theta_y\}$ where
\begin{equation}\label{eqn:SPDC-theta}
\textstyle
\!\!\! \theta_x = \Bigl(1\!-\!\frac{\tau\sigma\delta\mu^2}{2n(\sigma+4\delta)}\Bigr)
             \frac{1}{1+\tau\lambda}, ~~
             \theta_y = \frac{1+((n-1)/n)\sigma\gamma/2}{1+\sigma\gamma/2},
\end{equation}
then we have
\begin{align*}
\textstyle
\left(\frac{1}{2\tau}+\frac{\lambda}{2}\right)\E\bigl[\|\xt-\xopt\|^2\bigr]
+\frac{\gamma}{4}\E\bigl[\|\yt-\yopt\|^2\bigr]  &\leq \theta^t C, \\
\textstyle
\E\left[\Lagr(\xt,\yopt) - \Lagr(\xopt,\yt)\right] &\leq \theta^t C,
\end{align*}
where
$
C = \left(\frac{1}{2\tau}\!+\!\frac{\lambda}{2}\right)\|\xini-\xopt\|^2
  +\left(\frac{1}{2\sigma}\!+\!\frac{\gamma}{4}\right)
  \|\yini-\yopt\|^2.
$
The expectation $\E[\cdot]$ is taken
with respect to the history of random indices drawn at each iteration.
\end{theorem}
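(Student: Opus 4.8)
The plan is to follow the template behind the batch analysis of Theorem~\ref{thm:BPD-convergence}, replacing the deterministic one-step inequalities by conditional expectations over the randomly sampled coordinate and carefully tracking the extra $1/n$ factors this introduces. Throughout I would record the saddle point optimality conditions $\nabla\phi_i^*(\yiopt)=a_i^T\xopt$ for each $i$ and $-\tfrac{1}{n}A^T\yopt\in\partial g(\xopt)$, and use them to rewrite the iterates $\xt,\yt$ as deviations from $(\xopt,\yopt)$. First I would write the per-iteration prox inequalities. For the sampled dual coordinate, the update $\yitp=\prox_{\sigma\phi_i^*}(\cdot)$ together with the $\gamma$-strong convexity of $\phi_i^*$ (the conjugate of the $1/\gamma$-smooth $\phi_i$) yields a three-point inequality that bounds $\phi_i^*(\yitp)-\phi_i^*(\yiopt)$ plus a $\tfrac{1}{2\sigma}$-weighted contraction of $|\yit-\yiopt|^2$ against $|\yitp-\yiopt|^2$, up to the cross term $\inner{a_i}{\xtldt}(\yitp-\yiopt)$. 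For the primal step, $\xtp=\prox_{\tau g}(\cdot)$ with the $\lambda$-strong convexity of $g$ gives the analogous inequality, producing the factor $\tfrac{1}{1+\tau\lambda}$ that appears in $\theta_x$.

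Next I would take the conditional expectation given the history up to iteration $t$. Because only one of the $n$ dual coordinates is refreshed, the expected squared quantity $\E\nbr{\ytp-\yopt}^2$ contracts only on the sampled coordinate while the remaining $n-1$ are untouched, so the expectation splits as $\tfrac1n(\text{contracted})+\tfrac{n-1}{n}(\text{unchanged})$; this is exactly what produces the $((n-1)/n)$ factor inside $\theta_y$. At the same time the single-coordinate dual increment must be reconciled with the corrected primal direction built into the algorithm's primal step, so that in expectation the primal update matches the full-gradient direction $A^T\yt$. I would then assemble a Lyapunov function of the form $(\tfrac{1}{2\tau}+\tfrac{\lambda}{2})\nbr{\xt-\xopt}^2+(\tfrac{1}{2\sigma}+\tfrac{\gamma}{4})\E\nbr{\yt-\yopt}^2$ and show, using the extrapolation identity $\xtldt=\xt+\theta(\xt-\xtm)$, that the coupling terms telescope across consecutive iterations, mirroring the cancellation in the batch proof.

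The decisive and most delicate step is extracting the data strong convexity $\delta\mu^2$. Here I would invoke the $1/\delta$-smoothness of $\phi_i^*$ in the form of co-coercivity, $\inner{\nabla\phi_i^*(\yitp)-\nabla\phi_i^*(\yiopt)}{\yitp-\yiopt}\geq\delta\,|\nabla\phi_i^*(\yitp)-\nabla\phi_i^*(\yiopt)|^2$, and substitute $\nabla\phi_i^*(\yiopt)=a_i^T\xopt$ together with the prox optimality expression for $\nabla\phi_i^*(\yitp)$ in terms of $a_i^T\xtldt$ and $\tfrac{1}{\sigma}(\yitp-\yit)$. After completing the square to absorb that dual-increment cross term (which is where the denominator $\sigma+4\delta$ in $\theta_x$ emerges), summing the resulting $|a_i^T(\cdot-\xopt)|^2$ terms over $i$ and using $\tfrac1n\sum_i a_i a_i^T=\tfrac1n A^TA\succeq\tfrac{\mu^2}{n}I$ converts them into a $\tfrac{\delta\mu^2}{n}\nbr{\cdot-\xopt}^2$ contribution strengthening the primal contraction. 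I expect this extraction to be the main obstacle: the cross term between the dual increment and the primal deviation must be controlled without spoiling either the $\gamma$-contraction of the dual or the $\tfrac{1}{1+\tau\lambda}$ factor of the primal, and the specific choices of $\tau,\sigma$ in~\eqref{eqn:SPDC-tau-sigma} are precisely what balance these competing terms.

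Finally, verifying that the stated $\theta=\max\{\theta_x,\theta_y\}$ makes the Lyapunov function contract by $\theta$ at each step gives the distance bound, with the constant $C$ absorbing the initial $\tfrac{1}{2\sigma}+\tfrac{\gamma}{4}$ weight on $\nbr{\yini-\yopt}^2$. The same chain of one-step inequalities, but retaining the primal and dual objective-value gaps $\phi_i^*(\yitp)-\phi_i^*(\yiopt)$ and $g(\xtp)-g(\xopt)$ rather than discarding them, recombines through the bilinear term $y^TAx$ into $\E[\Lagr(\xt,\yopt)-\Lagr(\xopt,\yt)]$, yielding the second bound at the same rate $\theta^t C$.
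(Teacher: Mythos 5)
Your proposal follows essentially the same route as the paper's proof in Appendix~\ref{sec:proof_spdc}: three-point prox inequalities for both updates, the $\tfrac1n$/$\tfrac{n-1}{n}$ expectation split that yields the $((n-1)/n)$ factor in $\theta_y$, extraction of $\delta\mu^2$ by combining the $1/\delta$-smoothness of $\phi_i^*$ with the optimality conditions and $\lambda_{\min}(A^TA)=\mu^2$, a Young/complete-the-square step producing the $\sigma+4\delta$ denominator, and retention of the objective-gap terms to get the Lagrangian bound. The only cosmetic difference is that you phrase the smoothness of $\phi_i^*$ as co-coercivity while the paper uses the equivalent function-value lower bound built into its Lemma~\ref{lem:sc-min-3point} (with $\rho=1/2$); the argument is otherwise the same.
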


\begin{algorithm}[tb]
   \caption{Adaptive SPDC (Ada-SPDC)}
   \label{alg:Ada-SPDC}
\begin{algorithmic}
   \REQUIRE parameters $\sigma$, $\tau$, $\theta>0$, 
          initial point $(\xini,\yini)$,
   \STATE \qquad and adaptation period $T$.
   \vspace{0.5ex}
   \STATE Set $\xtld^{(0)} = \xini$
   \FOR{$t=0,1,2,\ldots$}
   \STATE pick $k\in\{1,\ldots,n\}$ uniformly at random
   \FOR{$i\in\{1,\ldots,n\}$}
   \IF{$i==k$}
   \STATE $\yktp = \prox_{\sigma\phi_k^*}\!\left(\ykt+\sigma a_k^T\xtldt\right)$
   \ELSE
   \STATE $\yitp = \yit$
   \ENDIF
   \ENDFOR
   \vspace{1ex}
   \STATE $\xtp = \prox_{\tau g}\!\left(\xt\!-\tau\bigl(\ut\!+(\yktp\!\!-\!\ykt)a_k\bigr)\right)$\\[0.5ex]
   \STATE $\utp = \ut + \frac{1}{n} (\yktp-\ykt)a_k$\\[0.5ex]
   \STATE $\xtldtp = \xtp + \theta(\xtp-\xt)$
   \vspace{1ex}
   \IF{$\mbox{mod}(t+1,T\cdot n)=0$}
   \STATE $(\tau,\sigma,\theta)=\mbox{SPDC-Adapt}\bigl(\{P^{(t-sn)},D^{(t-sn)}\}_{s=0}^{T}\bigr)$
   \ENDIF
   \ENDFOR
\end{algorithmic}
\end{algorithm}

Below we give a detailed discussion on the expected convergence rate
established in Theorem~\ref{thm:SPDC-convergence}.

\vspace{-1ex}
\paragraph{The cases of $\sigma \mu^2 = 0$ but $\lambda > 0$.}
In this case we have
$\tau = \frac{1}{4R} \sqrt{ \frac{\gamma}{n \lambda } }$ and
$\sigma = \frac{1}{4R} \sqrt{ \frac{n \lambda }{\gamma  }}$, and
\begin{align*}
\theta_x &= \textstyle 
\frac{1}{1+\tau\lambda} = 1 - \frac{1}{1 + 4 R \sqrt{n/(\lambda \gamma)}}, \\
\theta_y &= \textstyle 
\frac{1+((n-1)/n)\sigma\gamma/2}{1+\sigma\gamma/2}
= 1 - \frac{1}{n + 8 R \sqrt{n/(\lambda \gamma)}}.
\end{align*}
Hence $\theta=\theta_y$.
These recover the parameters and convergence rate
of the standard SPDC \citep{ZhangXiao2015SPDC}.

\vspace{-1ex}
\paragraph{The cases of $\sigma \mu^2 > 0$ but $\lambda = 0$.}
In this case we have
$\tau = \frac{1}{4R \mu} \sqrt{ \frac{\gamma}{ \delta } }$
and $\sigma = \frac{\mu}{4R} \sqrt{ \frac{ \delta }{\gamma  }}$, and
\begin{align*}
\theta_x &=
\textstyle
1 - \frac{\tau \sigma \delta \mu^2}{2 n(\sigma + 4 \delta )} 
= 1 - \frac{\gamma \delta \mu^2}{32 n R^2} \cdot \frac{1}{ \sqrt{\gamma\delta}\mu/(4R) + 4  \gamma \delta}. \\
\theta_y &= 
\textstyle
1 - \frac{1}{n + 8nR/(\mu \sqrt{\gamma \delta})}
\approx 1 - \frac{\sqrt{\gamma\delta}\mu}{8nR}\left(1+\frac{\sqrt{\gamma\delta}\mu}{8R}\right)^{-1}.
\end{align*}
Since the objective is $R^2/\gamma$-smooth 
and $\delta \mu^2/n$-strongly convex, 
$\theta_y$ is an accelerated rate if $\frac{\sqrt{\gamma\delta}\mu}{8R}\ll 1$
(otherwise $\theta_y\approx1-\frac{1}{n}$). 
For $\theta_x$, we consider different situations:
\vspace{-1ex}
\begin{itemize}\itemsep 0pt
\item If $\mu \geq R$, then we have 
$
\theta_x \approx 1\! -\! \frac{\sqrt{\gamma \delta } \mu }{ n R },
$ which is an accelerated rate. 
So is $\theta=\max\{\theta_x,\theta_y\}$. 
\item If $\mu < R$ and $\gamma \delta \approx \frac{\mu^2}{ R^2}$, then  
$
\theta_x \approx 1\! -\! \frac{\sqrt{\gamma \delta } \mu }{ n R },
$
which represents accelerated rate.
The iteration complexity of SPDC is 
$\widetilde{O}(\frac{n R}{\mu \sqrt{\gamma \delta}})$, which is better than
that of SVRG in this case, which is
$\widetilde{O}(\frac{ n R^2}{ \gamma \delta \mu^2 })$.
\item If $\mu < R$ and $\gamma \delta \approx \frac{\mu}{ R}$, then we get
$
\theta_x \approx 1 - \frac{\mu^2}{ n R^2}.
$
This is a half-accelerated rate, because in this case SVRG would require 
$\widetilde{O}(\frac{n R^3}{\mu^3})$ iterations, 
while iteration complexity here is 
$ \widetilde{O}(\frac{n R^2}{\mu^2})$.
\item If $\mu < R$ and $\gamma \delta \approx 1$, 
  meaning the $\phi_i$'s are well conditioned, then we get
$
\theta_x \approx 1 - \frac{ \gamma \delta \mu^2 }{ n R^2} \approx 1 - \frac{  \mu^2 }{ n R^2},
$
which is a non-accelerated rate. 
The corresponding iteration complexity is the same as SVRG.
\end{itemize}

\subsection{Parameter adaptation for SPDC}

The SPDC-Adapt procedure called in Algorithm~\ref{alg:Ada-SPDC}
follows the same logics as the batch adaption schemes in
Algorithms~\ref{alg:BPD-Adapt-simple} and~\ref{alg:BPD-Adapt-robust},
and we omit the details here.
One thing we emphasize here is that the adaptation period $T$ is in terms of
epochs, or number of passes over the data.
In addition, we only compute the primal and dual objective values after
each pass or every few passes, 
because computing them exactly usually need to take a full pass of the data.

Another important issue is that, unlike the batch case where the duality gap
usually decreases monotonically, the duality gap for randomized algorithms
can fluctuate wildly. 
So instead of using only the two end values $P^{(t-Tn)}-D^{(t-Tn)}$ and
$P^{(t)}-D^{(t)}$, we can use more points to 
estimate the convergence rate through a linear regression.
Suppose the primal-dual values at the end of each past $T+1$ passes are
\[
  \{P(0),D(0)\}, \{P(1),D(1)\}, \ldots, \{P(T), D(T)\},
\]
and we need to estimate $\rho$ (rate per pass) such that
\[
  P(t)-D(t) \approx \rho^t \bigl(P(0)-D(0)\bigr),\quad
  t=1,\ldots,T.
\]
We can turn it into a linear regression problem after taking logarithm
and obtain the estimate $\hat{\rho}$ through
\[
  \textstyle
  \log(\hat{\rho}) = \frac{1}{1^2+2^2+\cdots +T^2}
  \sum_{t=1}^T  t \log\frac{P(t)-D(t)}{P(0)-D(0)} .
\]
The rest of the adaption procedure can follow the robust scheme in 
Algorithm~\ref{alg:BPD-Adapt-robust}.
In practice, we can compute the primal-dual values more sporadically, 
say every few passes,
and modify the regression accordingly.

\section{Dual-free Primal-dual algorithms}
\label{sec:DF-primal-dual}

Compared with primal algorithms,
one major disadvantage of primal-dual algorithms is the requirement of
computing the proximal mapping of the dual function $f^*$ or $\phi_i^*$,
which may not admit closed-formed solution or efficient computation.
This is especially the case for logistic regression, 
one of the most popular loss functions used in classification.

\citet{LanZhou2015RPDG} developed ``dual-free'' variants of primal-dual 
algorithms that avoid computing the dual proximal mapping.
Their main technique is to replace the Euclidean distance in the dual proximal
mapping with a Bregman divergence defined over the dual loss function itself.
We show how to apply this approach to solve the structured ERM problems
considered in this paper.
They can also exploit strong convexity from data
if the algorithmic parameters are set appropriately or adapted automatically.

\subsection{Dual-free BPD algorithm}
\label{sec:DF-BPD}

\begin{algorithm}[tb]
   \caption{Dual-Free BPD Algorithm}
   \label{alg:DF-BPD}
\begin{algorithmic}
   \REQUIRE parameters $\sigma$, $\tau$, $\theta>0$, 
          initial point $(\xini,\yini)$ 
   \vspace{0.5ex}
   \STATE Set $\xtld^{(0)} = \xini$ and $\vini=(f^*)'(\yini)$
   \FOR{$t=0,1,2,\ldots$}
   \STATE $\vtp = \frac{\vt+\sigma A\xtldt}{1+\sigma}$, \quad $\ytp = f'(\vtp)$
   \STATE $\xtp = \prox_{\tau g}\left(\xt-\tau A^T \ytp\right)$
   \STATE $\xtldtp = \xtp + \theta(\xtp-\xt)$
   \ENDFOR
\end{algorithmic}
\end{algorithm}

First, we consider the batch setting.
We replace the dual proximal mapping 
(computing $\ytp$) in Algorithm~\ref{alg:BPD} with
\begin{equation}\label{eqn:BPD-Bregman-y-update}
  \ytp\! =\! \argmin_{y} 
  \textstyle
\!\left\{ f^*(y)\!-\! y^T A\xtldt \!+\! \frac{1}{\sigma} \Bdiv(y,\yt) \right\},
\end{equation}
where $\Bdiv$ is the Bregman divergence of a strictly convex kernel 
function~$h$, defined as
\[
      \Bdiv_h(y,\yt) = h(y) - h(\yt) - \langle \nabla h(\yt), y-\yt\rangle. 
\]
Algorithm~\ref{alg:BPD} is obtained in the Euclidean setting with
$h(y)=\frac{1}{2}\|y\|^2$ and $\Bdiv(y,\yt)=\frac{1}{2}\|y-\yt\|^2$.
While our convergence results would apply for arbitrary
Bregman divergence, we only focus on the case of using $f^*$ itself as the
kernel, because this allows us to compute $\ytp$ 
in~\eqref{eqn:BPD-Bregman-y-update} very efficiently.
The following lemma explains the details
\citep[Cf.][Lemma~1]{LanZhou2015RPDG}.

\begin{lemma}\label{lem:batch-DF}
Let the kernel $h\equiv f^*$ in the Bregman divergence $\Bdiv$.
If we construct a sequence of vectors $\{\vt\}$ such that
$\vini = (f^*)'(\yini)$ and for all $t\geq0$,
\begin{equation}\label{eqn:DF-v-update}
  \textstyle
  \vtp = \frac{\vt+\sigma A\xtldt}{1+\sigma},
\end{equation}
then the solution to problem~\eqref{eqn:BPD-Bregman-y-update} is
$
  \ytp = f'(\vtp).
$
\end{lemma}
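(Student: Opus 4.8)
The plan is to show that the Bregman-regularized update in~\eqref{eqn:BPD-Bregman-y-update}, when we choose the kernel $h\equiv f^*$, collapses to a simple closed-form expression involving only the gradients $f'$ and $(f^*)'$. The key fact I would exploit is the standard conjugate duality identity: since $f$ is $1/\gamma$-smooth and $\delta$-strongly convex, $f^*$ is differentiable and strictly convex, and the maps $(f^*)'$ and $f'$ are inverse to each other, i.e. $f'\bigl((f^*)'(y)\bigr)=y$ and $(f^*)'\bigl(f'(v)\bigr)=v$. The vector $\vt$ introduced in the lemma is meant to track $(f^*)'(\yt)$, so that $\ytp=f'(\vtp)$ is precisely the inverse relation.

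**First** I would write down the first-order optimality condition for the minimization in~\eqref{eqn:BPD-Bregman-y-update}. With $\Bdiv(y,\yt)=f^*(y)-f^*(\yt)-\langle(f^*)'(\yt),\,y-\yt\rangle$, the objective is $f^*(y)-y^TA\xtldt+\frac{1}{\sigma}\Bdiv(y,\yt)$. Differentiating in $y$ and setting the gradient to zero gives
\begin{equation*}
(f^*)'(\ytp)-A\xtldt+\tfrac{1}{\sigma}\bigl((f^*)'(\ytp)-(f^*)'(\yt)\bigr)=0.
\end{equation*}
Crucially, both the kernel term and the leading $f^*(y)$ term contribute a $(f^*)'(\ytp)$, which is the reason the kernel choice $h\equiv f^*$ is so convenient: the $f^*$ gradients combine cleanly. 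Collecting the coefficients of $(f^*)'(\ytp)$ yields $\frac{1+\sigma}{\sigma}(f^*)'(\ytp)=A\xtldt+\frac{1}{\sigma}(f^*)'(\yt)$.

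**Next** I would introduce $\vt\eqdef(f^*)'(\yt)$, so the optimality condition reads $(1+\sigma)(f^*)'(\ytp)=\sigma A\xtldt+\vt$, i.e. $(f^*)'(\ytp)=\frac{\vt+\sigma A\xtldt}{1+\sigma}$. Defining $\vtp$ by exactly the recursion~\eqref{eqn:DF-v-update}, this says $(f^*)'(\ytp)=\vtp$. Applying $f'$ to both sides and using the inverse-map identity $f'\bigl((f^*)'(\ytp)\bigr)=\ytp$ gives $\ytp=f'(\vtp)$, which is the claimed formula. It then remains only to confirm by induction that the auxiliary sequence $\{\vt\}$ genuinely equals $\{(f^*)'(\yt)\}$: the base case is the stated initialization $\vini=(f^*)'(\yini)$, and the inductive step is exactly the identity $(f^*)'(\ytp)=\vtp$ just derived.

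**The main obstacle** is not any hard estimate but rather justifying the gradient calculus cleanly: I must verify that $f^*$ is indeed differentiable (so that $\Bdiv$ and the optimality condition are well-defined), and that $(f^*)'$ and $f'$ are mutual inverses. Both follow from Assumption~\ref{asmp:batch-PD}, since $1/\gamma$-smoothness of $f$ makes $f^*$ $\gamma$-strongly convex (hence strictly convex with a single-valued subgradient), and $\delta$-strong convexity of $f$ (when $\delta>0$) makes $f^*$ smooth; when $\delta=0$ one argues via the general conjugate-subgradient correspondence $y\in\partial f(v)\iff v\in\partial f^*(y)$, which holds for any closed convex $f$. Once differentiability is secured, the remaining steps are purely algebraic, so the proof is short.
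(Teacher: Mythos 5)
Your proposal is correct and follows essentially the same route as the paper: both rely on the induction hypothesis $\vt=(f^*)'(\yt)$, reduce the update to the relation $(f^*)'(\ytp)=\frac{\vt+\sigma A\xtldt}{1+\sigma}=\vtp$, and invert via $f'=((f^*)')^{-1}$. The only cosmetic difference is that you write out the stationarity condition explicitly, whereas the paper rewrites the argmin as $\argmax_y\{\vtp^Ty-f^*(y)\}$ and invokes the conjugate-gradient characterization, which is the same first-order condition in disguise.
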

\begin{proof}
Suppose $\vt=(f^*)'(\yt)$ (true for $t=0$), then
\[
  \Bdiv(y,\yt) = f^*(y)-f^*(\yt)-\vt^T(y-\yt).
\]
The solution to~\eqref{eqn:BPD-Bregman-y-update} can be written as
\begin{align*}
\ytp 
\!&= \argmin_y \textstyle \Bigl\{f^*(y)\!-\!y^TA\xtldt\!+\!\frac{1}{\sigma}\bigl(f^*(y)\!-\!\vt^T y\bigr)\!\Bigr\}\\
\!&= \argmin_y \textstyle \Bigl\{\left(1+\frac{1}{\sigma}\right)\!f^*(y)-\left(A\xtldt+\frac{1}{\sigma}\vt\right)^T\! y\Bigr\}\\
\!&= \argmax_y \textstyle \Bigl\{\left(\frac{\vt+\sigma A\xtldt}{1+\sigma}\right)^T y - f^*(y)\Bigr\}\\
\!&= \argmax_y \textstyle \left\{\vtp^T y - f^*(y)\right\} = f'(\vtp),
\end{align*}
where in the last equality we used the property of conjugate function
when~$f$ is strongly convex and smooth.
Moreover, 
\[
  \vtp = (f')^{-1}(\ytp) = (f^*)'(\ytp),
\]
which completes the proof.
\end{proof}
According to Lemma~\ref{lem:batch-DF}, we only need to provide initial 
points such that $\vini=(f^*)'(\yini)$ is easy to compute. 
We do not need to compute $(f^*)'(\yt)$ directly for any $t>0$, because
it is can be updated as $\vt$ in~\eqref{eqn:DF-v-update}.
Consequently, we can update $\yt$ in the BPD algorithm using the
gradient $f'(\vt)$, without the need of dual proximal mapping.
The resulting dual-free algorithm is given in Algorithm~\ref{alg:DF-BPD}.
 
\citet{LanZhou2015RPDG} considered a general setting which 
does not possess the linear predictor structure we focus on in this paper,
and assumed that only the regularization~$g$ is strongly convex. 
Our following result shows that dual-free primal-dual algorithms
can also exploit strong convexity from data
with appropriate algorithmic parameters.

\begin{theorem}\label{thm:DF-BPD-convergence}
  Suppose Assumption~\ref{asmp:batch-PD} holds and let $(\xopt,\yopt)$ be
  the unique saddle point of~$\Lagr$ defined in~\eqref{eqn:batch-saddle}.
  If we set the parameters in Algorithm~\ref{alg:DF-BPD} as
\begin{equation}\label{eqn:Bregman-BPD-tau-sigma}
\textstyle 
\tau=\frac{1}{L}\sqrt{\frac{\gamma}{\lambda+\delta\mu^2}}, \quad
\sigma=\frac{1}{L}\sqrt{\gamma(\lambda+\delta\mu^2)}, 
\end{equation}
and $\theta=\max\{\theta_x, \theta_y\}$ where
\begin{equation}\label{eqn:Bregman-BPD-theta}
\textstyle
  \theta_x = \left(1-\frac{\tau\sigma\delta\mu^2}{(4+2\sigma)}\right)
             \frac{1}{1+\tau\lambda}, \quad
  \theta_y = \frac{1}{1+\sigma/2},
\end{equation}
then we have
\begin{align*}
\textstyle
\left(\frac{1}{2\tau}+\frac{\lambda}{2}\right)\|\xt-\xopt\|^2
  +\frac{1}{2}\Bdiv(\yopt,\yt)  &\leq \theta^t C, \\
\textstyle
  \Lagr(\xt,\yopt) - \Lagr(\xopt,\yt) &\leq \theta^t C,
\end{align*}
where
$
C = \left(\frac{1}{2\tau}\!+\!\frac{\lambda}{2}\right)\|\xini-\xopt\|^2
  +\left(\frac{1}{\sigma}\!+\!\frac{1}{2}\right)\Bdiv(\yopt,\yini).
$
\end{theorem}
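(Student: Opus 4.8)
The plan is to recognize, via Lemma~\ref{lem:batch-DF}, that Algorithm~\ref{alg:DF-BPD} is exactly the Bregman-divergence variant of the BPD algorithm (Algorithm~\ref{alg:BPD}) whose dual step is the argmin~\eqref{eqn:BPD-Bregman-y-update} with kernel $h\equiv f^*$. So I would run the same general primal--dual framework that proves Theorem~\ref{thm:BPD-convergence} (Appendix~\ref{sec:convergence_general}), but carry the dual proximal term as the Bregman divergence $\Bdiv(\cdot,\cdot)$ instead of $\tfrac12\nbr{\cdot}^2$, and specialize to $h\equiv f^*$ only at the end. Two conjugate facts drive everything: because $f$ is $\delta$-strongly convex and $1/\gamma$-smooth, $f^*$ is $\gamma$-strongly convex and $1/\delta$-smooth, so $\tfrac{\gamma}{2}\nbr{y-\yt}^2\le\Bdiv(y,\yt)\le\tfrac{1}{2\delta}\nbr{y-\yt}^2$; and, $f^*$ being its own kernel, it is $1$-strongly convex \emph{relative} to $\Bdiv$ (with equality in the definition of $\Bdiv$). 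This relative modulus $1$ replaces the Euclidean modulus $\gamma$ of Theorem~\ref{thm:BPD-convergence}, which is precisely why the dual factor is now $\theta_y=\tfrac{1}{1+\sigma/2}$ and why the $\sigma$ in~\eqref{eqn:Bregman-BPD-tau-sigma} equals the Euclidean $\sigma$ of~\eqref{eqn:batch-tau-sigma} scaled by $\gamma$.

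For the per-iteration estimate I would write the two variational inequalities produced by the prox steps. The $y$-update gives a Bregman three-point inequality that, after using the relative strong convexity of $f^*$, reads for every $y$,
\[
\inner{-A\xtldt}{\ytp-y}+f^*(\ytp)-f^*(y)
\le \tfrac{1}{\sigma}\bigl(\Bdiv(y,\yt)-\Bdiv(y,\ytp)-\Bdiv(\ytp,\yt)\bigr)-\Bdiv(y,\ytp),
\]
where the extra $-\Bdiv(y,\ytp)$ is the bonus supplying dual contraction. The $x$-update, an ordinary Euclidean prox of the $\lambda$-strongly convex $g$, gives for every $x$,
\[
\inner{A^T\ytp}{\xtp-x}+g(\xtp)-g(x)
\le \tfrac{1}{2\tau}\nbr{x-\xt}^2-\bigl(\tfrac{1}{2\tau}+\tfrac{\lambda}{2}\bigr)\nbr{x-\xtp}^2-\tfrac{1}{2\tau}\nbr{\xtp-\xt}^2.
\]
Setting $x=\xopt$, $y=\yopt$ and adding, the bilinear parts reassemble exactly into $\Lagr(\xtp,\yopt)-\Lagr(\xopt,\ytp)\ge0$ plus a single coupling term $\inner{A(\xtp-\xtldt)}{\ytp-\yopt}$, while the quadratic/divergence parts collect into the Lyapunov function $V^{(t)}=\bigl(\tfrac{1}{2\tau}+\tfrac{\lambda}{2}\bigr)\nbr{\xt-\xopt}^2+\bigl(\tfrac{1}{\sigma}+\tfrac12\bigr)\Bdiv(\yopt,\yt)$, which equals $C$ at $t=0$.

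It then remains to control the coupling and to extract the data-driven contraction. Using the extrapolation $\xtldt=\xt+\theta(\xt-\xtm)$ I would split $\inner{A(\xtp-\xtldt)}{\ytp-\yopt}$ into a term that telescopes across iterations and a residual bounded, via Young's inequality with scale $L=\nbr{A}$, against $\nbr{\xtp-\xt}^2$ and the dual divergence; this is the standard Chambolle--Pock telescoping, now carrying $\Bdiv$ in the dual slot. The genuinely new ingredient is the $\delta\mu^2$ term in $\theta_x$, which I expect to be the main obstacle. I would obtain it from the $1/\delta$-smoothness of $f^*$, equivalently the $\delta$-co-coercivity of $\nabla f^*$: since $\vtp=\nabla f^*(\ytp)$ by Lemma~\ref{lem:batch-DF} and $\nabla f^*(\yopt)=A\xopt$ at the saddle point, co-coercivity produces a term of order $\delta\nbr{\nabla f^*(\ytp)-\nabla f^*(\yopt)}^2$, into which the $v$-recursion~\eqref{eqn:DF-v-update} injects $A(\xtldt-\xopt)$, and then $\nbr{A(\xtldt-\xopt)}^2\ge\mu^2\nbr{\xtldt-\xopt}^2$ (using $\mu^2=\lambdamin(A^TA)$) feeds a $\delta\mu^2$-strength term back into the primal coefficient. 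Balancing this gain against the dual slack $\Bdiv(\ytp,\yt)$ and the coupling residual — which is what fixes the denominator $4+2\sigma$ and the form $\theta_x=\bigl(1-\tfrac{\tau\sigma\delta\mu^2}{4+2\sigma}\bigr)\tfrac{1}{1+\tau\lambda}$ in~\eqref{eqn:Bregman-BPD-theta} — is the delicate computation; the choice~\eqref{eqn:Bregman-BPD-tau-sigma} is exactly the one that cancels the leftover $L^2$ cross terms, yielding $V^{(t+1)}\le\theta V^{(t)}$ with $\theta=\max\{\theta_x,\theta_y\}<1$. Unrolling gives $V^{(t)}\le\theta^tC$; dropping to the weaker coefficient $\tfrac12\Bdiv(\yopt,\yt)$ on the left gives the first claimed bound, and propagating the nonnegative Lagrangian gap through the same recursion gives the second.
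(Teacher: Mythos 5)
Your proposal follows essentially the same route as the paper's own proof (Appendices~\ref{sec:convergence_general} and~\ref{sec:proof_dfbpd}): the Bregman three-point inequality with kernel $h\equiv f^*$, relative strong convexity $\nu=1$, the $1/\delta$-smoothness of $f^*$ combined with $\nabla f^*(\yopt)=A\xopt$ and $\lambdamin(A^TA)=\mu^2$ to inject the $\delta\mu^2$ gain into the primal coefficient, and a Young-inequality balancing inside a telescoping Lyapunov recursion. The one repair needed is in your displayed dual inequality: you cannot keep the full relative-strong-convexity bonus $-\Bdiv(y,\ytp)$ \emph{and} separately extract the co-coercivity term $\tfrac{\delta}{2}\|\nabla f^*(y)-\nabla f^*(\ytp)\|^2$, since both are lower bounds on the same Bregman gap of $\psi$ and must be convex-combined — the paper's Lemma~\ref{lem:sc-min-3point} does this with $\rho=1/2$, which is precisely the origin of the $\sigma/2$ in $\theta_y$ and the $\delta/4$ coefficient, and your stated final rates already reflect that split.
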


Theorem~\ref{thm:DF-BPD-convergence} is proved in 
Appendices~\ref{sec:convergence_general} and~\ref{sec:proof_dfbpd}.
Assuming $\gamma(\lambda+\delta\mu^2)\ll L^2$, we have
\begin{align*}
  \textstyle
  \theta_x \approx 1-\frac{\gamma\delta\mu^2}{16 L^2}
  - \frac{\lambda}{2L}\sqrt{\frac{\gamma}{\lambda+\delta\mu^2}}, \quad
  \theta_y  
  \approx 1 - \frac{\sqrt{\gamma(\lambda+\delta\mu^2)}}{4L}.
\end{align*}
Again, we gain insights by consider the special cases:
\vspace{-1ex}
\begin{itemize} \itemsep 0pt
  \item If $\delta\mu^2 = 0$ and $\lambda >0$, then 
   $ 
      \theta_y \approx 1 - \frac{\sqrt{\gamma\lambda}}{4 L}
   $
   and
   $
      \theta_x \approx 1 - \frac{\sqrt{\gamma\lambda}}{2 L}.
   $ 
   So $\theta=\max\{\theta_x,\theta_y\}$ is an accelerated rate.
  \item If $\delta\mu^2>0$ and $\lambda=0$, then 
   $ 
      \theta_y \approx 1 - \frac{\sqrt{\gamma\delta\mu^2}}{4 L}
   $
   and
   $
      \theta_x \approx 1 - \frac{\gamma\delta\mu^2}{16 L^2}
   $.
   Thus
    $\theta=\max\{\theta_x,\theta_y\}\approx 1-\frac{\gamma\delta\mu^2}{16L^2}$
    is not accelerated.
    Notice that this conclusion does not depends on the relative size of 
    $\gamma\delta$ and $\mu^2/L^2$, 
    and this is the major difference from
    the Euclidean case discussed in Section~\ref{sec:BPD}.
\end{itemize}
If both $\delta\mu^2>0$ and $\lambda>0$, then the extent of acceleration
    depends on their relative size. 
    If $\lambda$ is on the same order as $\delta\mu^2$ or larger, then
    accelerated rate is obtained. 
    If $\lambda$ is much smaller than $\delta\mu^2$, then the theory predicts
    no acceleration.

\subsection{Dual-free SPDC algorithm}
\label{sec:DF-SPDC}

The same approach can be applied to derive an Dual-free SPDC algorithm,
which is described in Algorithm~\ref{alg:ADF-SPDC}.
It also includes a parameter adaption procedure, so we call it the 
adaptive dual-free SPDC (ADF-SPDC) algorithm.
On related work,
\citet{ShalevZhang2016accSDCAinMathProg} and \cite{shalev2016sdca} 
introduced dual-free SDCA.

\begin{algorithm}[tb]
   \caption{Adaptive Dual-Free SPDC (ADF-SPDC)}
   \label{alg:ADF-SPDC}
\begin{algorithmic}
   \REQUIRE parameters $\sigma$, $\tau$, $\theta>0$, 
          initial point $(\xini,\yini)$,
   \STATE \qquad and adaptation period $T$.
   \vspace{0.5ex}
   \STATE Set $\xtld^{(0)} = \xini$ and $\viini=(\phi_i^*)'(\yiini)$ for $i=1,\ldots,n$
   \FOR{$t=0,1,2,\ldots$}
   \STATE pick $k\in\{1,\ldots,n\}$ uniformly at random
   \FOR{$i\in\{1,\ldots,n\}$}
   \IF{$i==k$} 
   \vspace{0.5ex}
   \STATE $\vktp = \frac{\vkt + \sigma a_k^T \xtldt}{1+\sigma}$,
        ~~$\yktp = \phi'_k(\vktp)$
   \ELSE
   \STATE $\vitp = \vit$,
        ~~$\yitp = \yit$
   \ENDIF
   \ENDFOR
   \vspace{1ex}
   \STATE $\xtp = \prox_{\tau g}\!\left(\xt\!-\tau\bigl(\ut\!+(\yktp\!\!-\!\ykt)a_k\bigr)\right)$\\[0.5ex]
   \STATE $\utp = \ut + \frac{1}{n} (\yktp-\ykt)a_k$\\[0.5ex]
   \STATE $\xtldtp = \xtp + \theta(\xtp-\xt)$
   \vspace{1ex}
   \IF{$\mbox{mod}(t+1,T\cdot n)=0$}
   \STATE $(\tau,\sigma,\theta)=\mbox{SPDC-Adapt}\bigl(\{P^{(t-sn)},D^{(t-sn)}\}_{s=0}^{T}\bigr)$
   \ENDIF
   \ENDFOR
\end{algorithmic}
\end{algorithm}

The following theorem characterizes the choice of algorithmic parameters
that can exploit strong convexity from data to achieve linear
convergence (proof given in Appendix~\ref{sec:proof_adf_spdc}).

\begin{theorem}\label{thm:DF-SPDC-convergence}
  Suppose Assumption~\ref{asmp:erm} holds. Let $(\xopt,\yopt)$ be
  the saddle point of~$\Lagr$ defined in~\eqref{eqn:erm-saddle}
  and $R=\max\{\|a_1\|,\ldots,\|a_n\|\}$.
  If we set $T=\infty$ in Algorithm~\ref{alg:ADF-SPDC} (non adaption) 
  and let
\begin{equation}\label{eqn:DF-SPDC-tau-sigma}
\textstyle 
\sigma=\frac{1}{4R}\sqrt{\gamma(n\lambda+\delta\mu^2)}, \quad
\tau=\frac{1}{4R}\sqrt{\frac{\gamma}{n\lambda+\delta\mu^2}}, 
\end{equation}
and $\theta=\max\{\theta_x, \theta_y\}$ where
\begin{equation}\label{eqn:DF-SPDC-theta}
\textstyle
\theta_x = \left(1-\frac{\tau\sigma\delta\mu^2}{n(4   +2\sigma)}\right)
             \frac{1}{1+\tau\lambda}, \quad
\theta_y = \frac{1+((n-1)/n)\sigma/2}{1+\sigma/2},
\end{equation}
then we have
\begin{align*}
\textstyle
\left(\frac{1}{2\tau}+\frac{\lambda}{2}\right)\E\bigl[\|\xt-\xopt\|^2\bigr]
+\frac{\gamma}{4}\E\bigl[\Bdiv(\yopt,\yt)\bigr]  &\leq \theta^t C, \\
\textstyle
\E\left[\Lagr(\xt,\yopt) - \Lagr(\xopt,\yt)\right] &\leq \theta^t C,
\end{align*}
where
$
C = \left(\frac{1}{2\tau}\!+\!\frac{\lambda}{2}\right)\|\xini-\xopt\|^2
  +\left(\frac{1}{\sigma}\!+\!\frac{1}{2}\right) \Bdiv(\yopt,\yini).
$
\end{theorem}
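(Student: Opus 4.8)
The plan is to mirror the analysis of the standard SPDC algorithm (Theorem~\ref{thm:SPDC-convergence}) but to replace the Euclidean dual proximal step with the Bregman proximal step whose kernel is $\phi_k^*$, exactly as justified coordinate-wise by Lemma~\ref{lem:batch-DF}. Concretely, I would first observe that the update of the active coordinate~$k$ in Algorithm~\ref{alg:ADF-SPDC} is equivalent to
\[
  \yktp = \argmin_{y_k}\Bigl\{\phi_k^*(y_k) - y_k\, a_k^T\xtldt + \tfrac{1}{\sigma}\mathcal{D}_k(y_k, \ykt)\Bigr\},
\]
with $\mathcal{D}_k$ the Bregman divergence of $\phi_k^*$, while the remaining $n-1$ coordinates are frozen. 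Writing the first-order optimality condition for this Bregman-prox step and for the primal step $\xtp=\prox_{\tau g}(\cdots)$ yields two variational inequalities. The key structural fact I would exploit is that, because $\phi_k$ is $1/\gamma$-smooth, its conjugate $\phi_k^*$ is $\gamma$-strongly convex, so $\mathcal{D}_k(y_k,y_k')\ge\tfrac{\gamma}{2}\|y_k-y_k'\|^2$; this is what ultimately produces the $\gamma$-weighted dual term in the Lyapunov function and lets the generalized divergence $\Bdiv$ replace the Euclidean distance.

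Next I would combine the two variational inequalities with the saddle-point optimality of $(\xopt,\yopt)$, using the three-point identity for Bregman divergences to telescope the dual terms and the extrapolation step $\xtldtp=\xtp+\theta(\xtp-\xt)$ to cancel the bilinear cross terms $\langle A(\xtp-\xopt),\,\ytp-\yopt\rangle$ across consecutive iterations. At this stage I would take the expectation $\E_k[\cdot]$ over the uniformly chosen index~$k$. Because only one of the $n$ dual coordinates moves per step, the expected dual progress is scaled by $1/n$; this is the origin of both the $(n-1)/n$ factor in $\theta_y$ and the $1/n$ in the $\delta\mu^2$ term of $\theta_x$ in~\eqref{eqn:DF-SPDC-theta}.

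The decisive step is the extraction of strong convexity from the data. Here I would use the $\delta$-strong convexity of each $\phi_i$ together with $\mu^2=\lambdamin(A^TA)$: the curvature of the losses, transferred through the linear map~$A$, lower bounds the relevant quadratic form by a multiple of $\tfrac{\delta\mu^2}{n}\|\xt-\xopt\|^2$, which is precisely what injects $\delta\mu^2$ into $\theta_x$. Assembling everything into the potential $\bigl(\tfrac{1}{2\tau}+\tfrac{\lambda}{2}\bigr)\|\xt-\xopt\|^2+\tfrac{\gamma}{4}\Bdiv(\yopt,\yt)$ and substituting the prescribed $\sigma,\tau$ from~\eqref{eqn:DF-SPDC-tau-sigma}, I would verify that one iteration contracts the potential by the factor $\theta=\max\{\theta_x,\theta_y\}$; recursing gives the stated $\theta^t C$ bound, and retaining (rather than discarding) the $\Lagr(\xt,\yopt)-\Lagr(\xopt,\yt)$ terms in the same inequality yields the second conclusion.

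I expect the main obstacle to be the simultaneous bookkeeping of the randomized coordinate selection and the non-Euclidean (Bregman) dual geometry: in the Euclidean SPDC proof the cross terms are disposed of by completing squares, but with $\mathcal{D}_k$ one must instead balance $\tau$ and $\sigma$ so that the bilinear terms cancel in expectation while the contribution of the frozen coordinates is carried forward correctly. Getting the constant $\tfrac{1}{n(4+2\sigma)}$ in $\theta_x$ to line up exactly — tracking the interplay of the factor from the smoothness of $\phi_k$ against the $2\sigma$ from the Bregman step, and the $1/n$ from coordinate sampling — is the delicate part.
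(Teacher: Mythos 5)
Your plan follows essentially the same route as the paper's proof in Appendix~\ref{sec:proof_adf_spdc}: a coordinate-wise Bregman proximal step with kernel $\phi_i^*$, the three-point (Lemma~\ref{lem:sc-min-3point}) inequalities for both updates, expectation over the sampled index producing the $1/n$ and $(n-1)/n$ factors, injection of $\delta\mu^2$ through the $1/\delta$-smoothness of $\phi_i^*$ combined with the optimality conditions and $\|A(\xopt-\xt)\|^2\ge\mu^2\|\xopt-\xt\|^2$, and a contracting potential built from $\|\xt-\xopt\|^2$, $\Bdiv(\yopt,\yt)$ and the bilinear cross term. The delicate bookkeeping you flag (balancing $\alpha$, $\sigma\tau=\gamma/(16R^2)$, and the lower bound $\Bdiv(\ytp,\yt)\ge\frac{\gamma}{2}\|\ytp-\yt\|^2+\frac{\delta}{2}\|\phi^{*\prime}(\ytp)-\phi^{*\prime}(\yt)\|^2$) is exactly where the paper spends its effort, and your outline contains no step that would fail.
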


Below we discuss the expected convergence rate
established in Theorem~\ref{thm:SPDC-convergence} in two special cases.

\vspace{-1ex}
\paragraph{The cases of $\sigma \mu^2 = 0$ but $\lambda > 0$.}
In this case we have
$\tau = \frac{1}{4R} \sqrt{ \frac{\gamma}{n \lambda } }$ and
$\sigma = \frac{1}{4R} \sqrt{ n \gamma\lambda }$, and
\begin{align*}
  \theta_x &= \textstyle \frac{1}{1+\tau\lambda} 
  = 1 - \frac{1}{1 + 4 R \sqrt{n/(\lambda \gamma)}}, \\
\theta_y &= \textstyle 
\frac{1+((n-1)/n)\sigma/2}{1+\sigma/2}
= 1 - \frac{1}{n + 8 R \sqrt{n/(\lambda \gamma)}}.
\end{align*}
These recover the convergence rate of
the standard SPDC algorithm \citep{ZhangXiao2015SPDC}.

\vspace{-1ex}
\paragraph{The cases of $\sigma \mu^2 > 0$ but $\lambda = 0$.}
In this case we have
\begin{align*}
\textstyle
\tau = \frac{1}{4R \mu} \sqrt{ \frac{\gamma}{ \delta } }, \quad  \sigma = \frac{\mu}{4R} \sqrt{ \delta \gamma },
\end{align*}
and
\vspace{-1ex}
\begin{align*}
\theta_x &= \textstyle
1 - \frac{\tau \sigma \delta \mu^2}{2 n(\sigma + 4  )} 
= 1 - \frac{\gamma \delta \mu^2}{32 n R^2} \cdot \frac{1}{ \sqrt{\gamma\delta}\mu/(4R) + 4 }, \\
\theta_y &= \textstyle 
\frac{1+((n-1)/n)\sigma/2}{1+\sigma/2}
= 1 - \frac{1}{n + 8nR/(\mu \sqrt{\gamma \delta})}.
\end{align*}
We note that the primal function now is $R^2/\gamma$-smooth
and $\delta \mu^2/n$-strongly convex. 
We discuss the following cases:
\vspace{-1ex}
\begin{itemize} \itemsep 0pt
    \item If $\sqrt{\gamma\delta}\mu > R$, 
      then we have $\theta_x \approx 1 - \frac{\sqrt{\gamma\delta}\mu}{8nR}$
      and $\theta_y\approx1-\frac{1}{n}$. 
      Therefore $\theta=\max\{\theta_x,\theta_y\}\approx 1-\frac{1}{n}$.
\item Otherwise, we have 
  $\theta_x \approx 1 - \frac{\gamma \delta \mu^2}{64 n R^2} $
  and $\theta_y$ is of the same order. 
  This is not an accelerated rate, and we have the same iteration complexity
 as SVRG.
\end{itemize}

Finally, we give concrete examples of how to compute the initial points
$\yini$ and $\vini$ such that $\viini = (\phi_i^*)'(\yiini)$.
\vspace{-1ex}
\begin{itemize}\itemsep 0pt
  \item For squared loss, $\phi_i(\alpha)=\frac{1}{2}(\alpha-b_i)^2$ and
    $\phi_i^*(\beta)=\frac{1}{2}\beta^2+b_i\beta$.
    So $\viini=(\phi_i^*)'(\yiini)=\yiini+b_i$.
  \item For logistic regression, we have $b_i\in\{1,-1\}$ and 
    $\phi_i(\alpha)=\log(1+e^{-b_i\alpha})$. The conjugate function is
      $\phi_i^*(\beta) 
      = (-b_i\beta)\log(-b_i\beta) + (1+b_i\beta)\log(1+b_i\beta)$
      if $b_i\beta\in[-1,0]$ and $+\infty$ otherwise.
      We can choose $\yiini\!=\!-\frac{1}{2}b_i$ and $\viini\!=\!0$
      such that $\viini \!=\! (\phi_i^*)'(\yiini)$.
\end{itemize}
For logistic regression, we have $\delta=0$ over the full domain of $\phi_i$.
However, each $\phi_i$ is locally strongly convex in
bounded domain \citep{bach2014adaptivity}: if $z \in [-B,B]$, then we know
$\delta = \min_{z} \phi_i{''}(z) \geq \exp(-B)/4$.
Therefore it is well suitable for an adaptation scheme similar to Algorithm~\ref{alg:BPD-Adapt-robust}
that do not require knowledge of either~$\delta$ or $\mu$.

\section{Preliminary experiments}

\begin{figure*}[t]
\psfrag{Number of passes}[tc][tc]{\footnotesize Number of passes}
\psfrag{Adaptive C-P}[cl][cl]{\scriptsize Ada-BPD}
\psfrag{Optimal C-P}[cl][cl]{\scriptsize Opt-BPD}
\psfrag{C-P}[cl][cl]{\scriptsize BPD}
\psfrag{AGD}[cl][cl]{\scriptsize Primal AG}
\begin{center}
\psfrag{Primal objective suboptimality}[bc][bc]{\footnotesize Primal optimality gap}
\includegraphics[width=0.33 \textwidth]{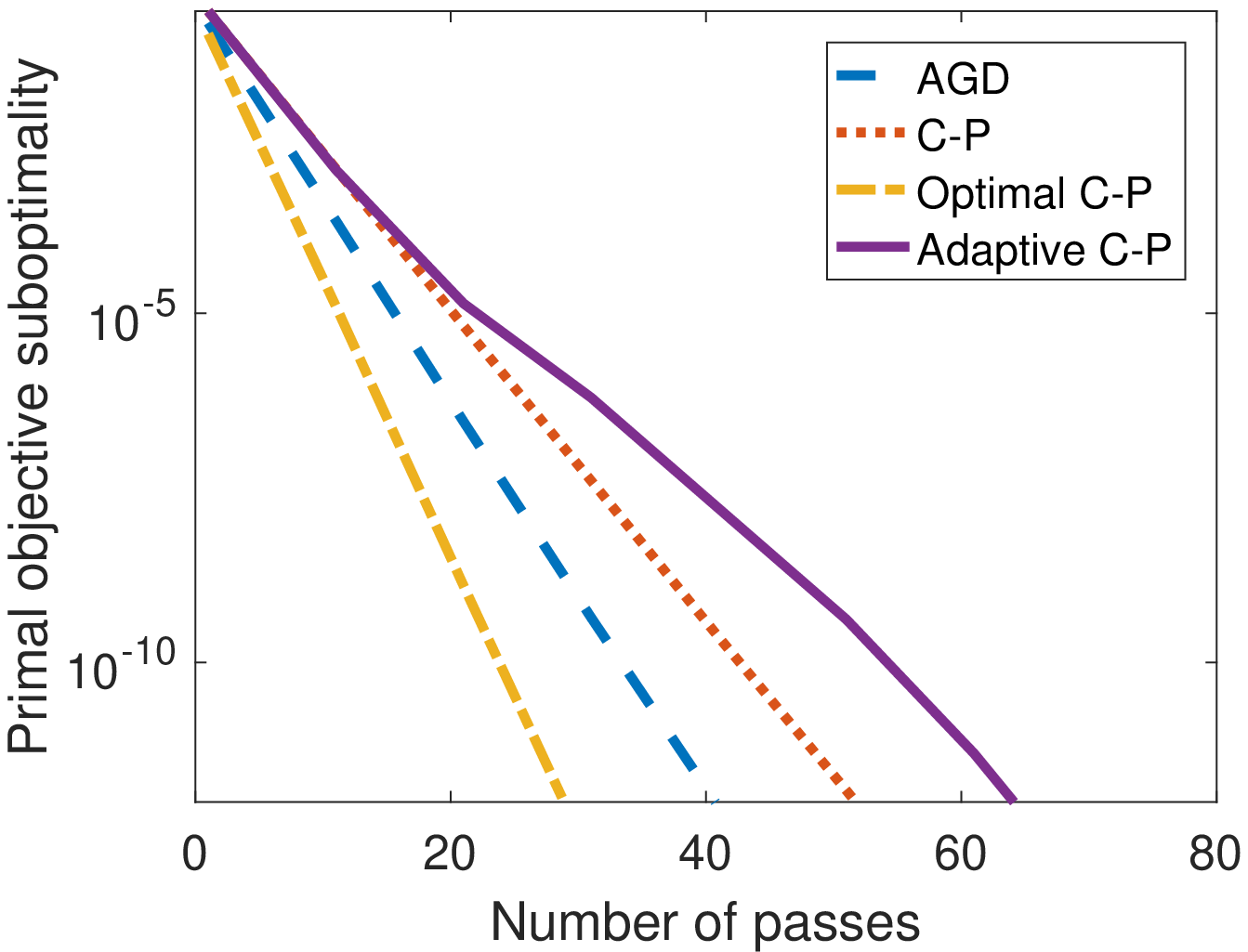}%
\psfrag{Primal objective suboptimality}[bc][bc]{}
\includegraphics[width=0.33 \textwidth]{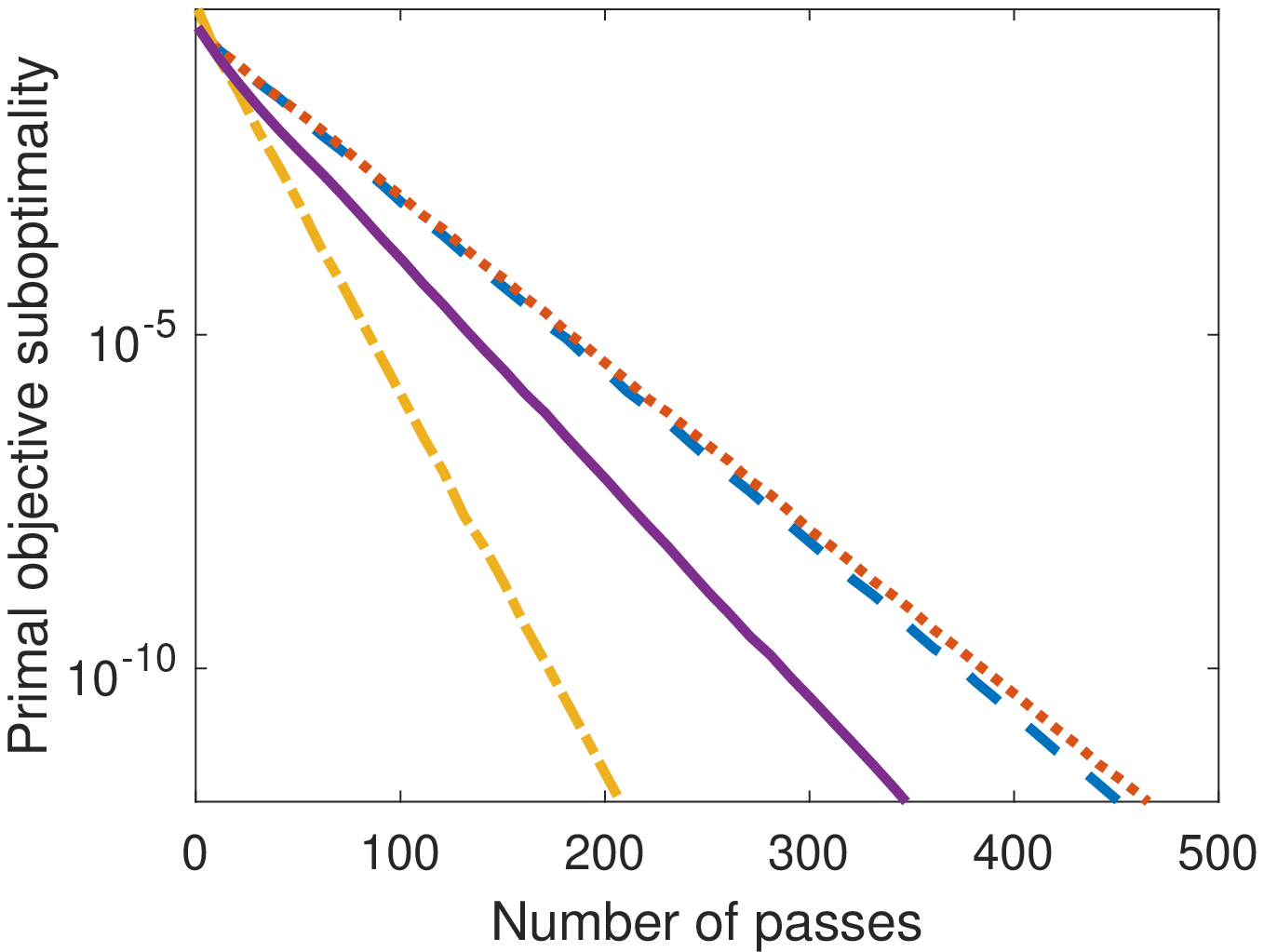}%
\includegraphics[width=0.33 \textwidth]{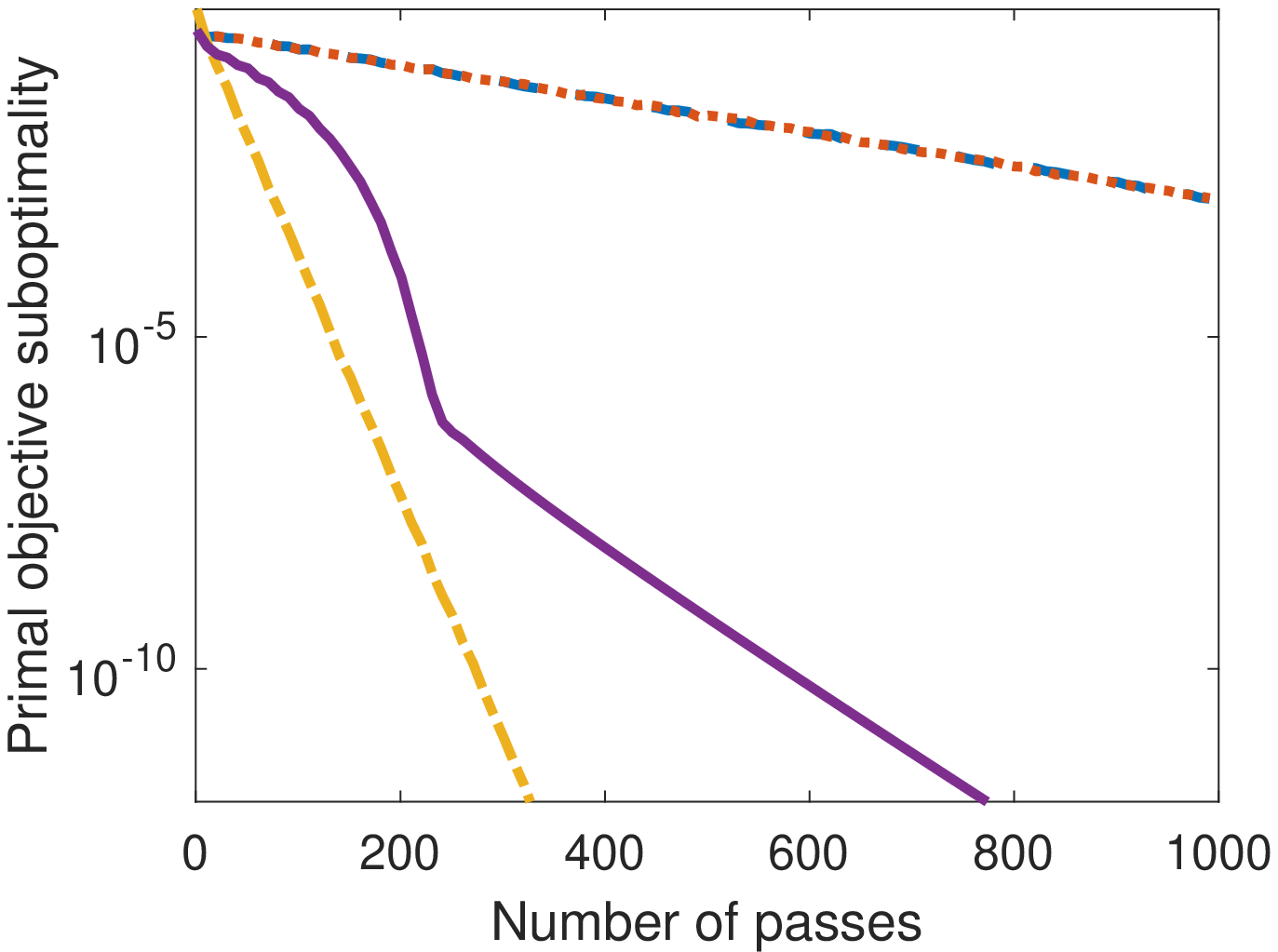}%
\end{center}
\makebox[0.34 \textwidth]{synthetic1, $\lambda = 1/n$}\makebox[0.34 \textwidth]{synthetic1, $\lambda = 10^{-2}/n$}\makebox[0.34 \textwidth]{synthetic1, $\lambda = 10^{-4}/n$}
\caption{Comparison of batch primal-dual algorithms for a
ridge regression problem with $n=5000$ and $d=3000$.}
\label{fig:batch}
\end{figure*}

We present preliminary experiments to demonstrate the effectiveness of our
proposed algorithms. 
First, we consider batch primal-dual algorithms for ridge regression
over a synthetic dataset.
The data matrix~$A$ has sizes $n=5000$ and $d=3000$, and its entries are
sampled from multivariate normal distribution with mean zero and covariance 
matrix $\Sigma_{ij} = 2^{|i-j|/2}$.
We normalize all datasets such that
$a_i = a_i/\left( \max_j \|a_j\| \right)$, 
to ensure the maximum norm of the data points is~$1$. 
We use $\ell_2$-regularization $g(x)=(\lambda/2)\|x\|^2$ with three
choices of parameter $\lambda$: $1/n$, $10^{-2}/n$ and $10^{-4}/n$, 
which represent the strong, medium, and weak levels of regularization, 
respectively.

Figure~\ref{fig:batch} shows the performance of four different algorithms:
the accelerated gradient algorithm for solving the primal minimization problem
(Primal AG) \cite{Nesterov2004book} using~$\lambda$ as strong convexity 
parameter, the BPD algorithm (Algorithm~\ref{alg:BPD}) that uses $\lambda$ as 
the strong convexity parameter (setting $\mu=0$), 
the optimal BPD algorithm (Opt-BPD) that uses $\mu=\sqrt{\lambdamin(A^T A)}$
explicitly computed from data,
and the Ada-BPD algorithm (Algorithm~\ref{alg:Ada-BPD}) with the robust 
adaptation heuristic (Algorithm~\ref{alg:BPD-Adapt-robust}) with
$T=10$, $\underline{c}=0.95$ and $\overline{c}=1.5$.
As expected, the performance of Primal-AG is very similar to BPD with the 
same strong convexity parameter.
The Opt-BPD fully exploits strong convexity from data, thus has the fastest
convergence. 
The Ada-BPD algorithm can partially exploit strong convexity from data
without knowledge of~$\mu$.

Next we compare the DF-SPDC (Algorithm~\ref{alg:Ada-SPDC} without adaption) and 
ADF-SPDC (Algorithm~\ref{alg:ADF-SPDC} with adaption) 
against several state-of-the-art randomized algorithms for ERM:
SVRG \citep{johnson2013accelerating}, 
SAGA \citep{defazio2014saga} 
Katyusha \citep{allen2016katyusha} 
and the standard SPDC method \citep{ZhangXiao2015SPDC}. 
For SVRG and Katyusha (an accelerated variant of SVRG), we choose the variance
reduction period as $m = 2n$.
The step sizes of all algorithms are set as their
original paper suggested. 
For Ada-SPDC and ADF-SPDC, we use the robust adaptation scheme with
$T=10$, $\underline{c}=0.95$ and $\overline{c}=1.5$.
%We set the extrapolation parameter $\theta = 1$.

We first compare these randomized algorithms for ridge regression over 
the same synthetic data described above and 
the \texttt{cpuact} data from
the LibSVM website\footnote{\url{https://www.csie.ntu.edu.tw/~cjlin/libsvm/}}. 
The results are shown in Figure~\ref{fig:ridge}.
With relatively strong regularization $\lambda=1/n$, 
all methods perform similarly as predicted by theory.
For the synthetic dataset 
With $\lambda=10^{-2}/n$, 
the regularization is weaker but still stronger than the hidden strong 
convexity from data, so the accelerated algorithms (all variants of SPDC and
Katyusha) perform better than SVRG and SAGA.
With $\lambda=10^{-4}/n$, it looks that the strong convexity from data dominates
the regularization. 
Since the non-accelerated algorithms (SVRG and SAGA) may automatically exploit
strong convexity from data, they become 
faster than the non-adaptive accelerated methods (Katyusha, SPDC and DF-SPDC).
The adaptive accelerated method, ADF-SPDC, has the fastest convergence.
This shows that our theoretical results (which predict no acceleration in
this case) can be further improved.

Finally we compare these randomized algorithm for logistic regression on
the \texttt{rcv1} dataset (from LibSVM website) and
another synthetic dataset with $n=5000$ and $d=500$, generated similarly
as before but with covariance matrix $\Sigma_{ij} = 2^{|i-j|/100}$.
For the standard SPDC, we solve the dual proximal mapping using a few steps
of Newton's method to high precision.
The dual-free SPDC algorithms only use gradients of the logistic function.
The results are presented in Figure~\ref{fig:logistic}.
for both datasets, the strong convexity from data is very weak (or none),
so the accelerated algorithms performs better. 

\begin{figure*}[t]
\psfrag{Number of passes}[tc][tc]{\footnotesize Number of passes}
\psfrag{SPDC-BD-Adaptive}[cl][cl]{\scriptsize ADF-SPDC}
\psfrag{SPDC-BD}[cl][cl]{\scriptsize DF-SPDC}
\psfrag{SPDC}[cl][cl]{\scriptsize SPDC}
\psfrag{SVRG}[cl][cl]{\scriptsize SVRG}
\psfrag{SAGA}[cl][cl]{\scriptsize SAGA}
\psfrag{Katyusha}[cl][cl]{\scriptsize Katyusha}
\begin{center}
\psfrag{Primal objective suboptimality}[bc][bc]{\footnotesize Primal optimality gap}
\includegraphics[width=0.33 \textwidth]{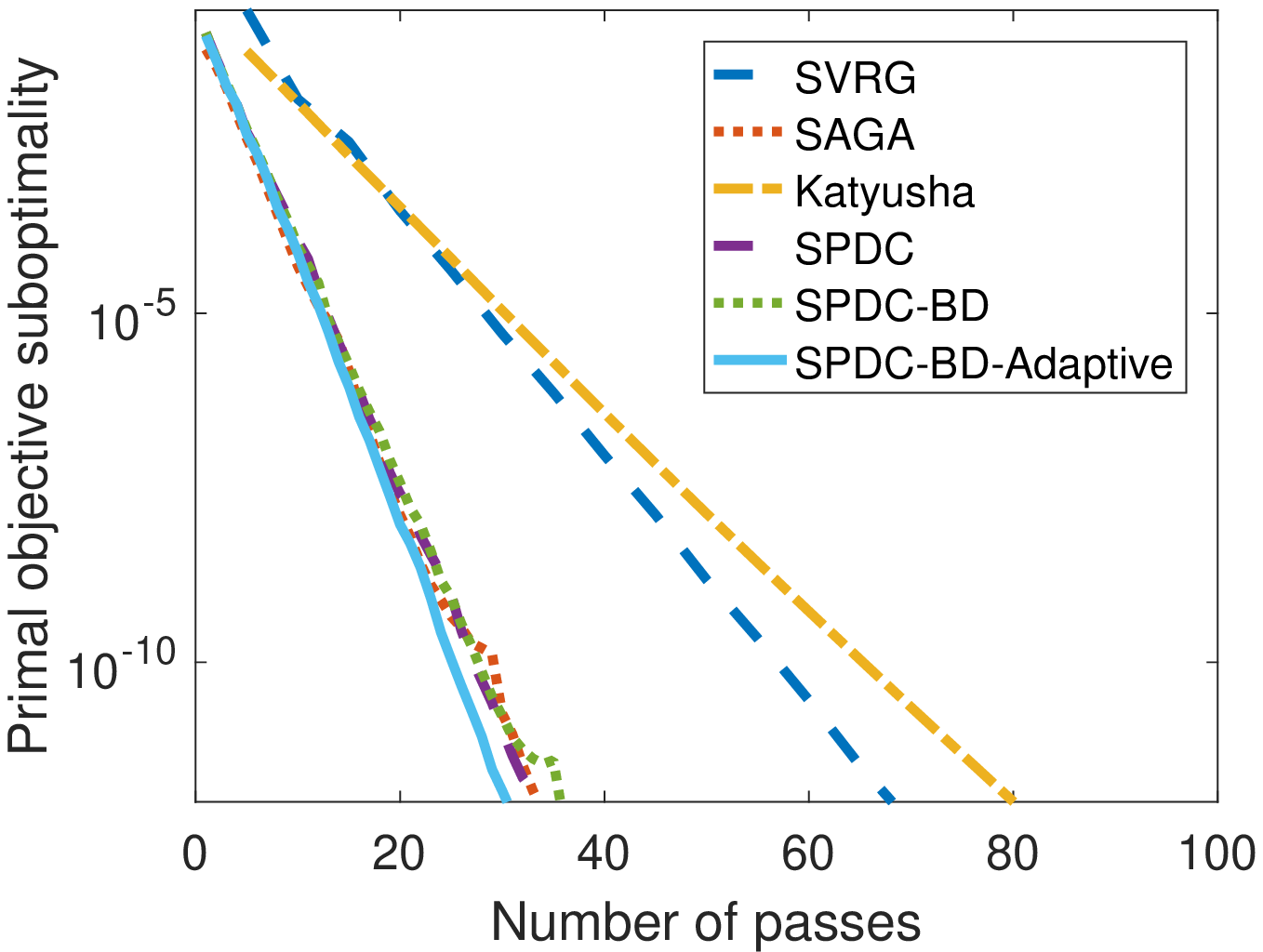}%
\psfrag{Primal objective suboptimality}[bc][bc]{}
\includegraphics[width=0.33 \textwidth]{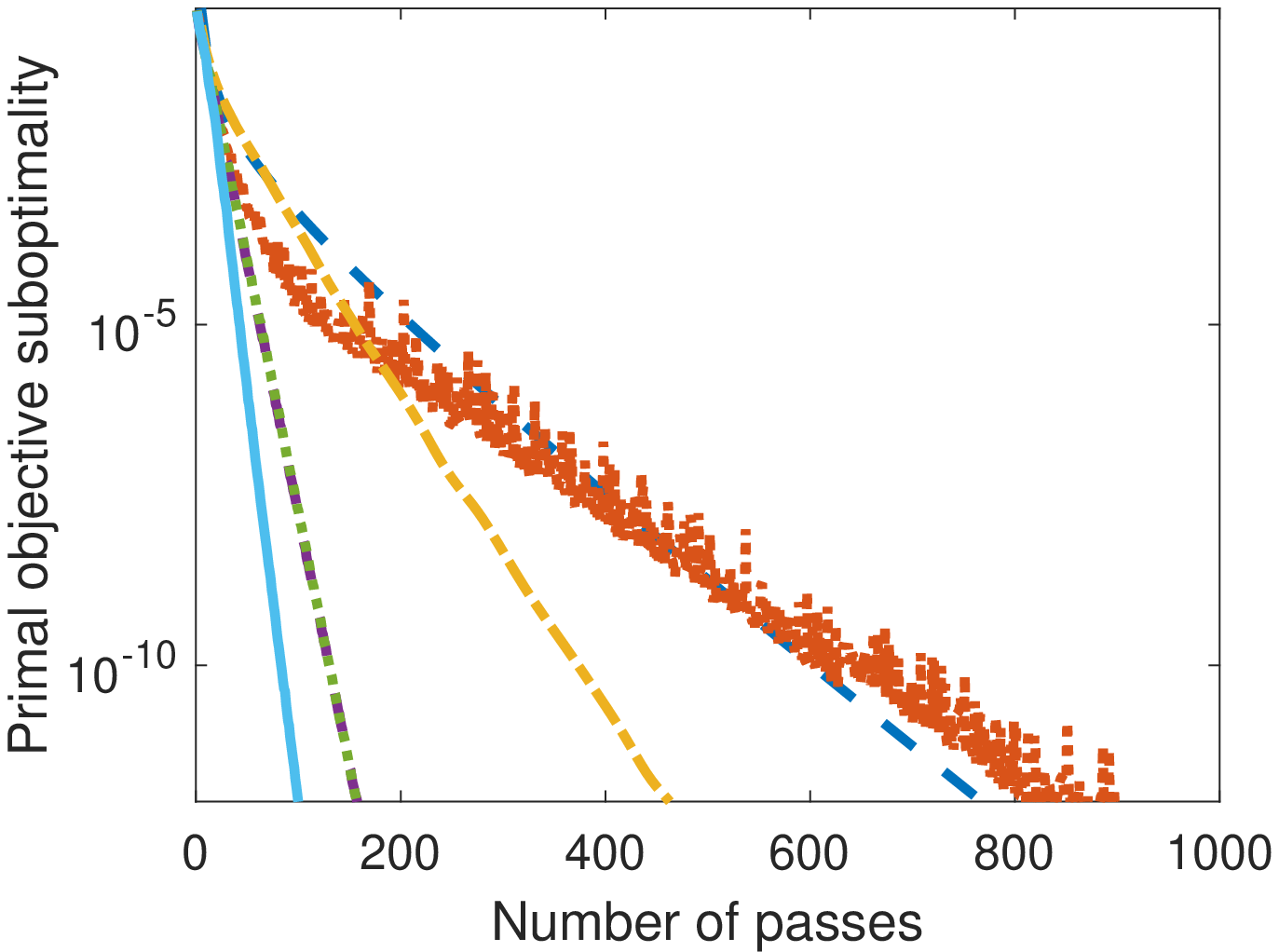}%
\includegraphics[width=0.33 \textwidth]{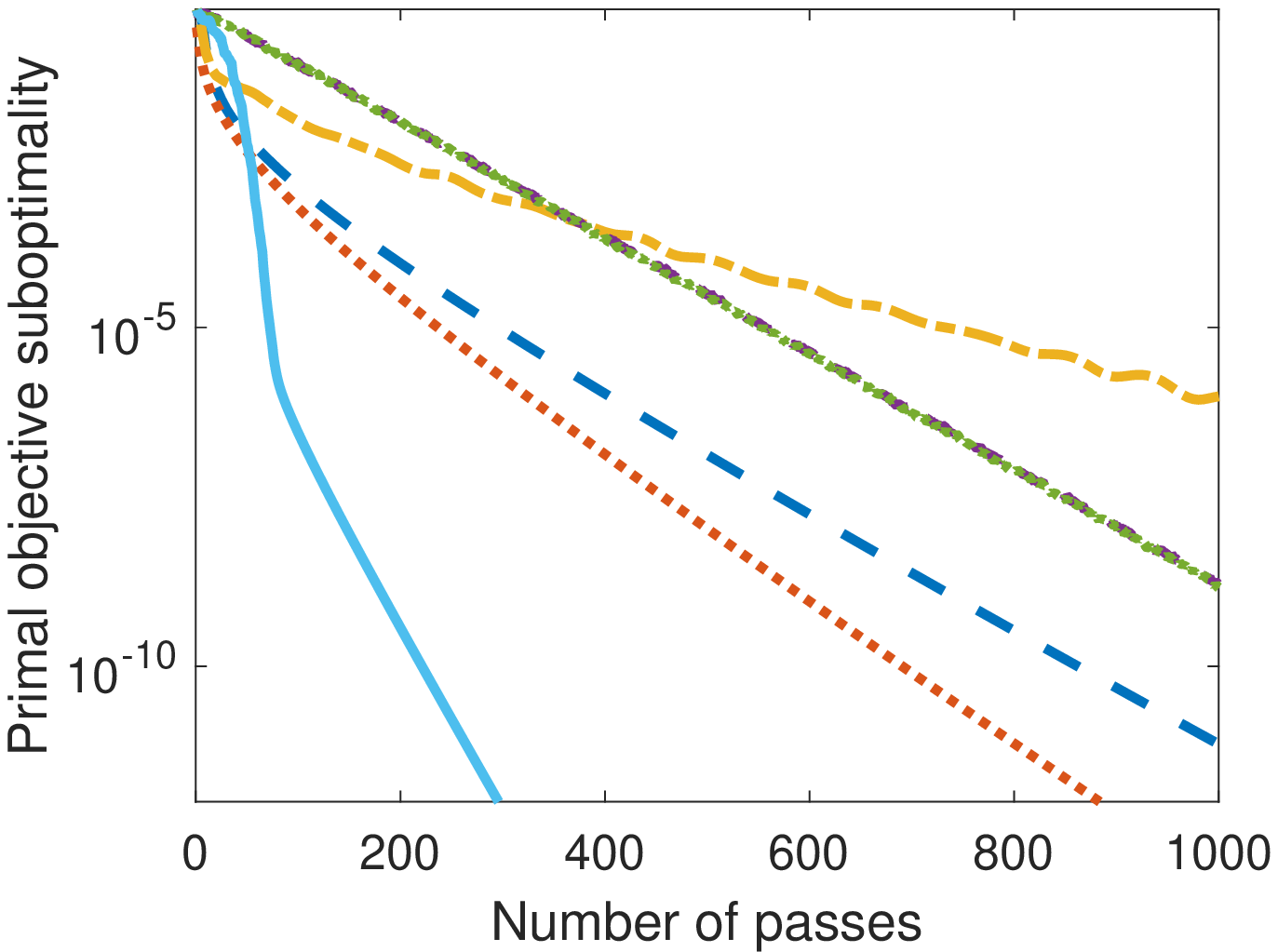}%
\end{center}
\makebox[0.34 \textwidth]{synthetic1, $\lambda = 1/n$}\makebox[0.34 \textwidth]{synthetic1, $\lambda = 10^{-2}/n$}\makebox[0.34 \textwidth]{synthetic1, $\lambda = 10^{-4}/n$}
\begin{center}
\psfrag{Primal objective suboptimality}[bc][bc]{\footnotesize Primal optimality gap}
\includegraphics[width=0.33 \textwidth]{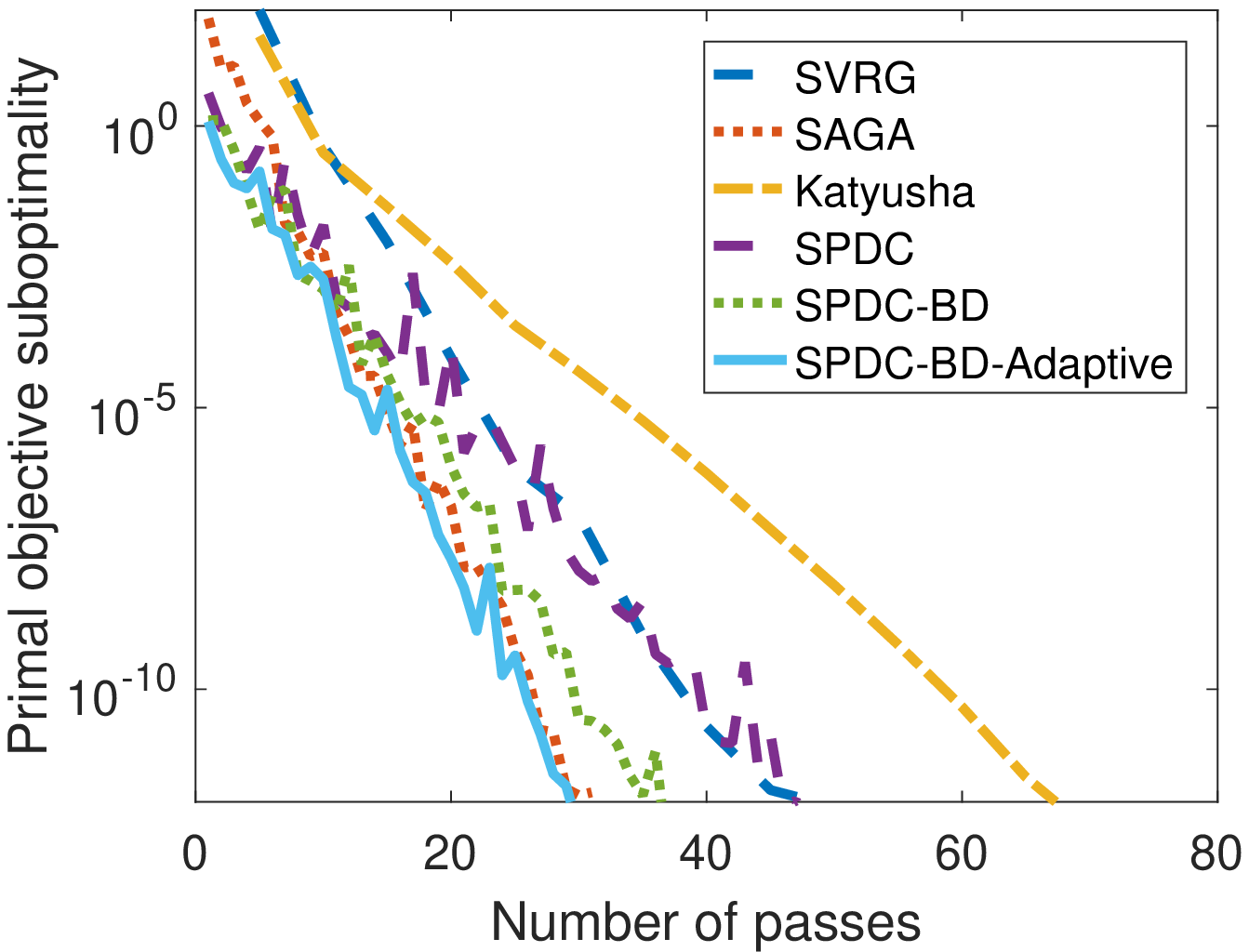}%
\psfrag{Primal objective suboptimality}[bc][bc]{}
\includegraphics[width=0.33 \textwidth]{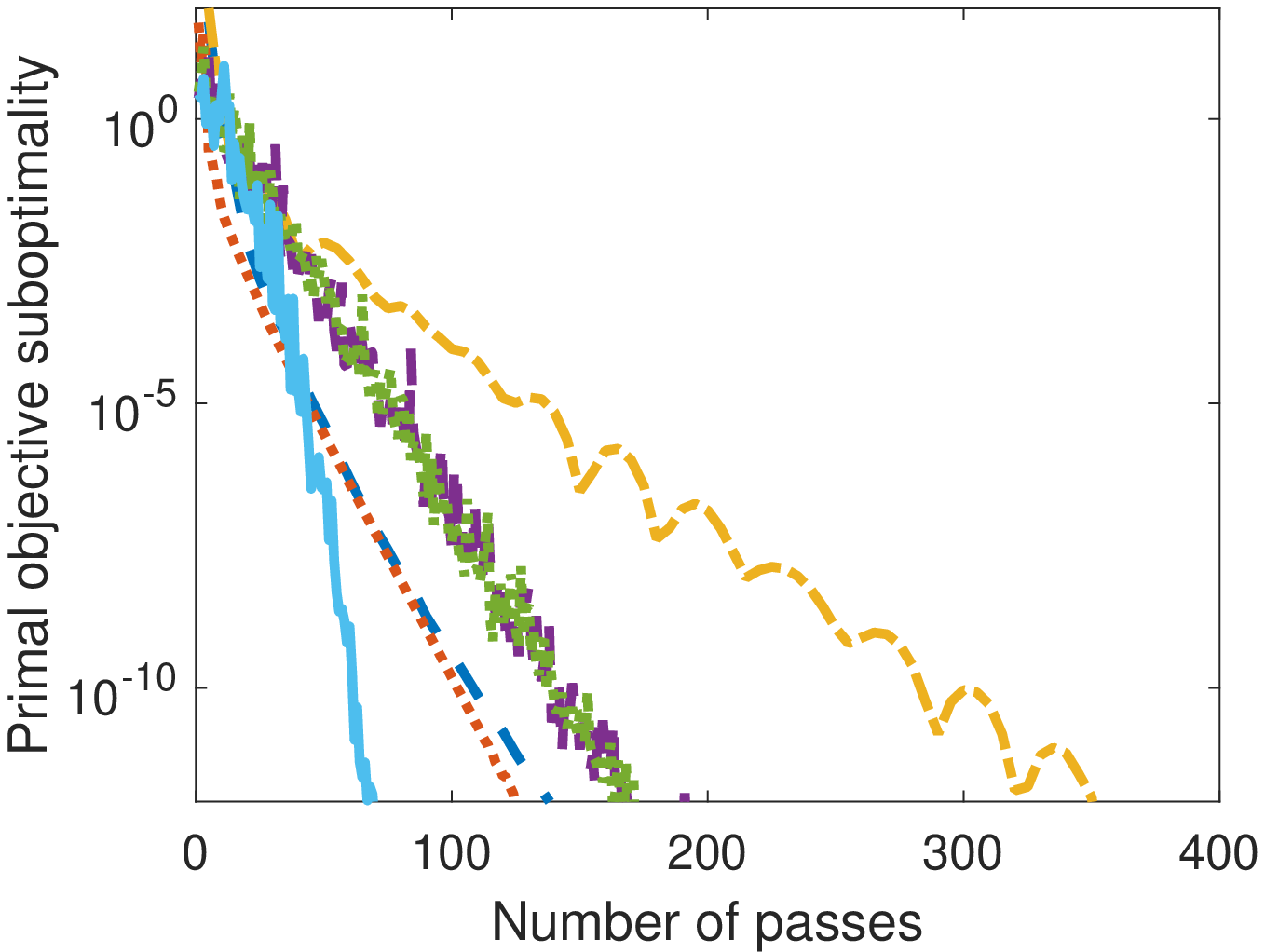}%
\includegraphics[width=0.33 \textwidth]{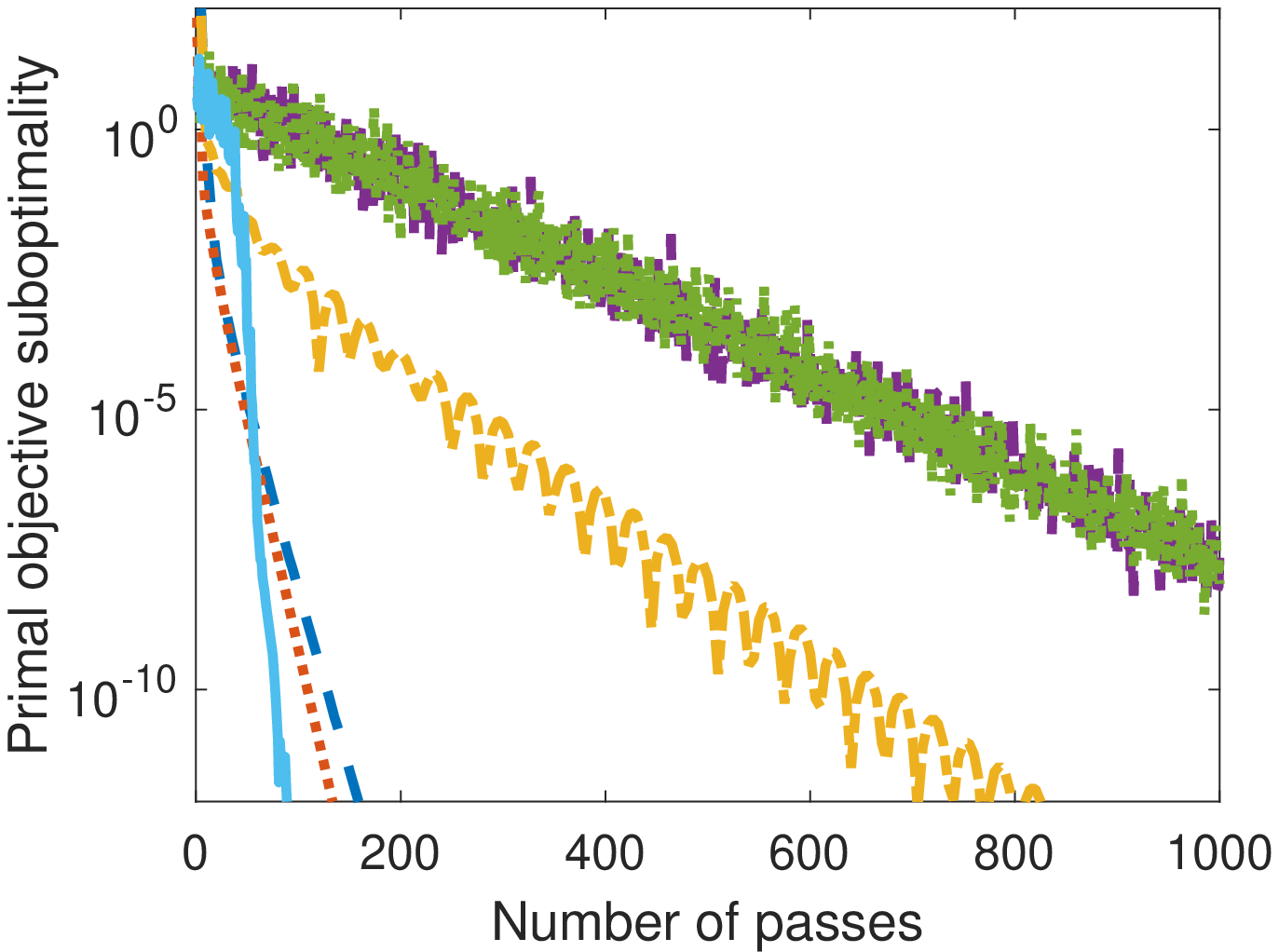}%
\end{center}
\makebox[0.34 \textwidth]{cpuact, $\lambda = 1/n$}\makebox[0.34 \textwidth]{cpuact, $\lambda = 10^{-2}/n$}\makebox[0.34 \textwidth]{cpuact, $\lambda = 10^{-4}/n$}
\caption{Comparison of randomized algorithms for ridge regression problems.}
\label{fig:ridge}
\end{figure*}

\begin{figure*}[t]
\psfrag{Number of passes}[tc][tc]{\footnotesize Number of passes}
\psfrag{SPDC-BD-Adaptive}[cl][cl]{\scriptsize ADF-SPDC}
\psfrag{SPDC-BD}[cl][cl]{\scriptsize DF-SPDC}
\psfrag{SPDC}[cl][cl]{\scriptsize SPDC}
\psfrag{SVRG}[cl][cl]{\scriptsize SVRG}
\psfrag{SAGA}[cl][cl]{\scriptsize SAGA}
\psfrag{Katyusha}[cl][cl]{\scriptsize Katyusha}
\begin{center}
\psfrag{Primal objective suboptimality}[bc][bc]{\footnotesize Primal optimality gap}
\includegraphics[width=0.33 \textwidth]{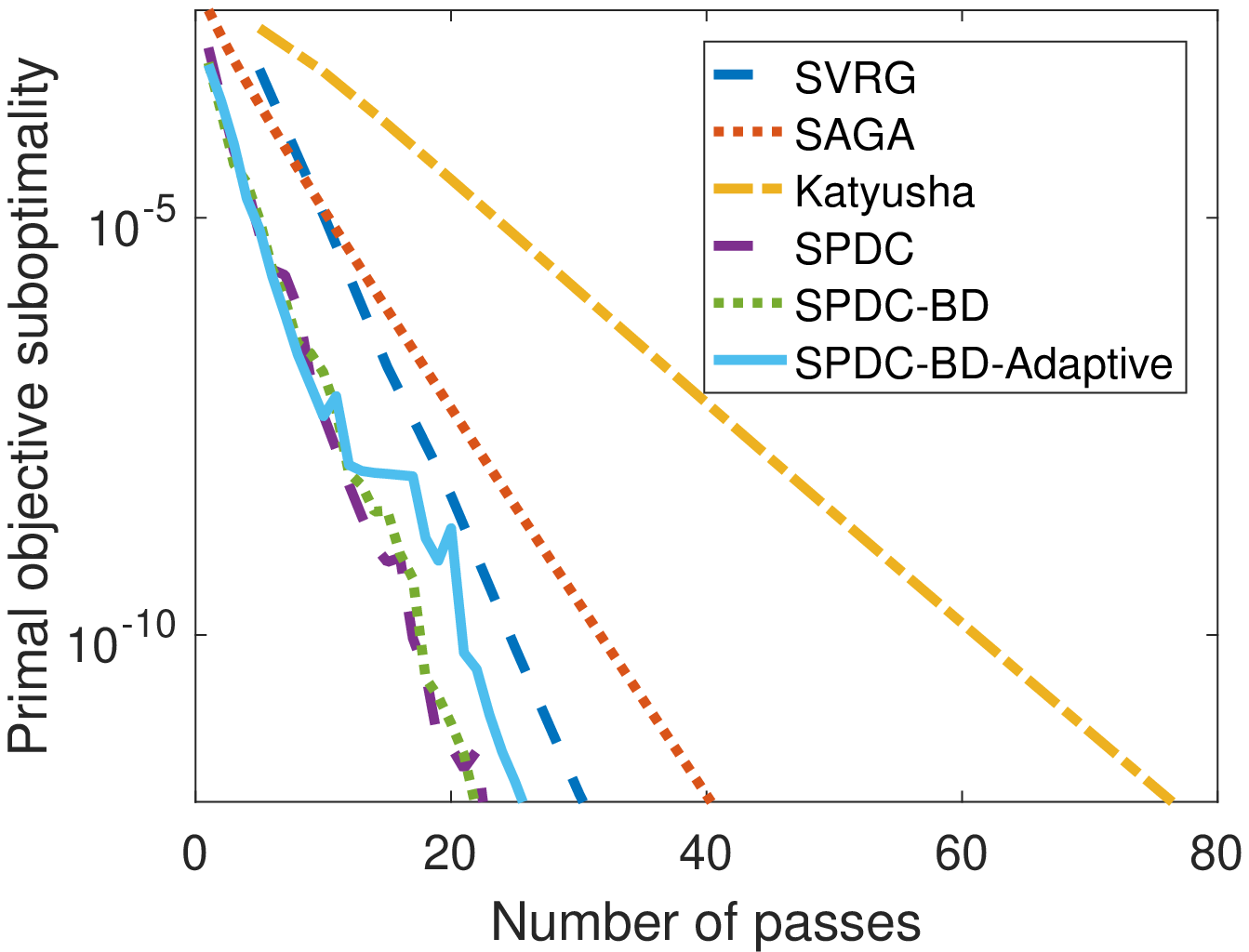}%
\psfrag{Primal objective suboptimality}[bc][bc]{}
\includegraphics[width=0.33 \textwidth]{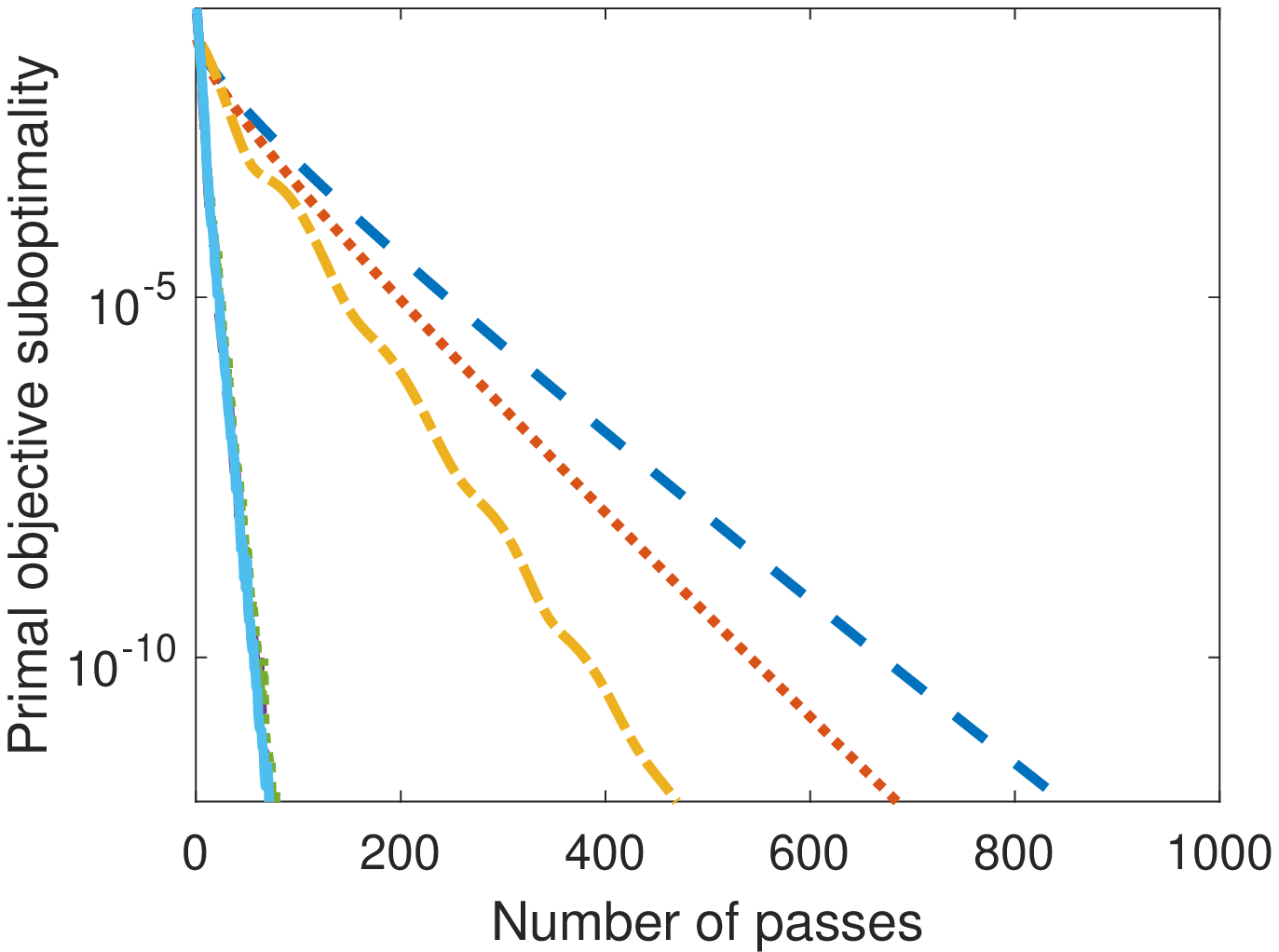}%
\includegraphics[width=0.33 \textwidth]{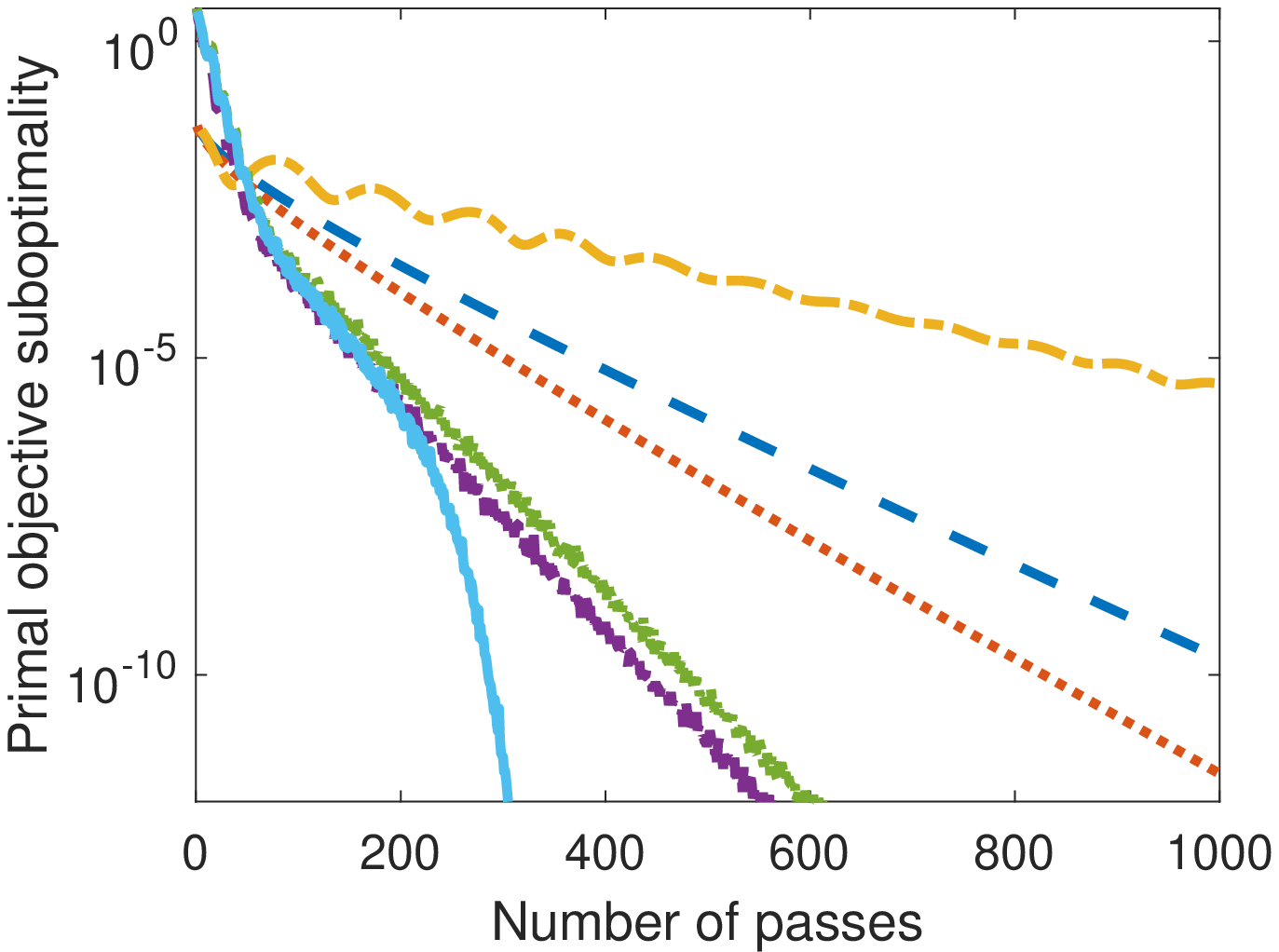}%
\end{center}
\makebox[0.34 \textwidth]{synthetic2, $\lambda = 1/n$}\makebox[0.34 \textwidth]{synthetic2, $\lambda = 10^{-2}/n$}\makebox[0.34 \textwidth]{synthetic2, $\lambda = 10^{-4}/n$}
\begin{center}
\psfrag{Primal objective suboptimality}[bc][bc]{\footnotesize Primal optimality gap}
\includegraphics[width=0.33 \textwidth]{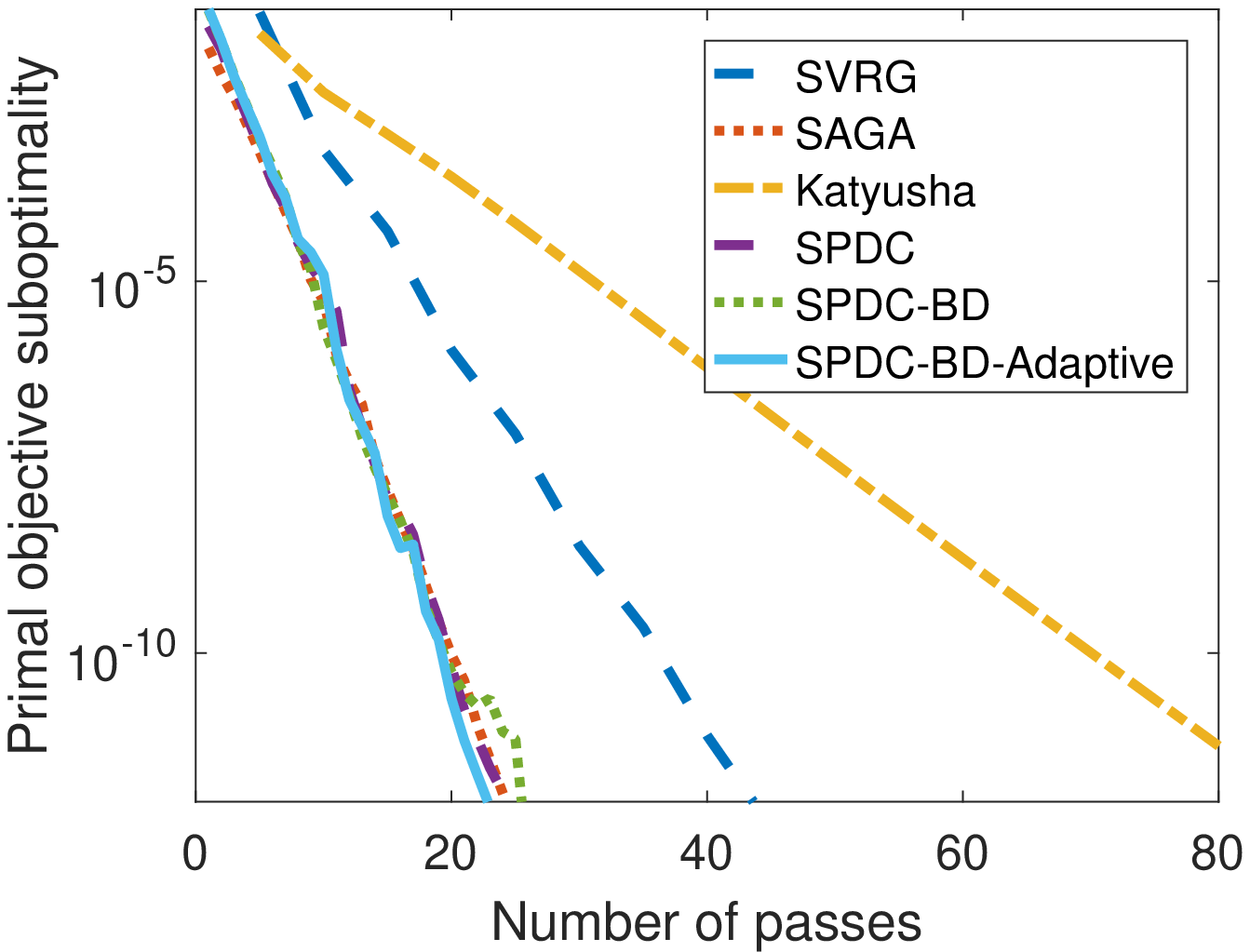}%
\psfrag{Primal objective suboptimality}[bc][bc]{}
\includegraphics[width=0.33 \textwidth]{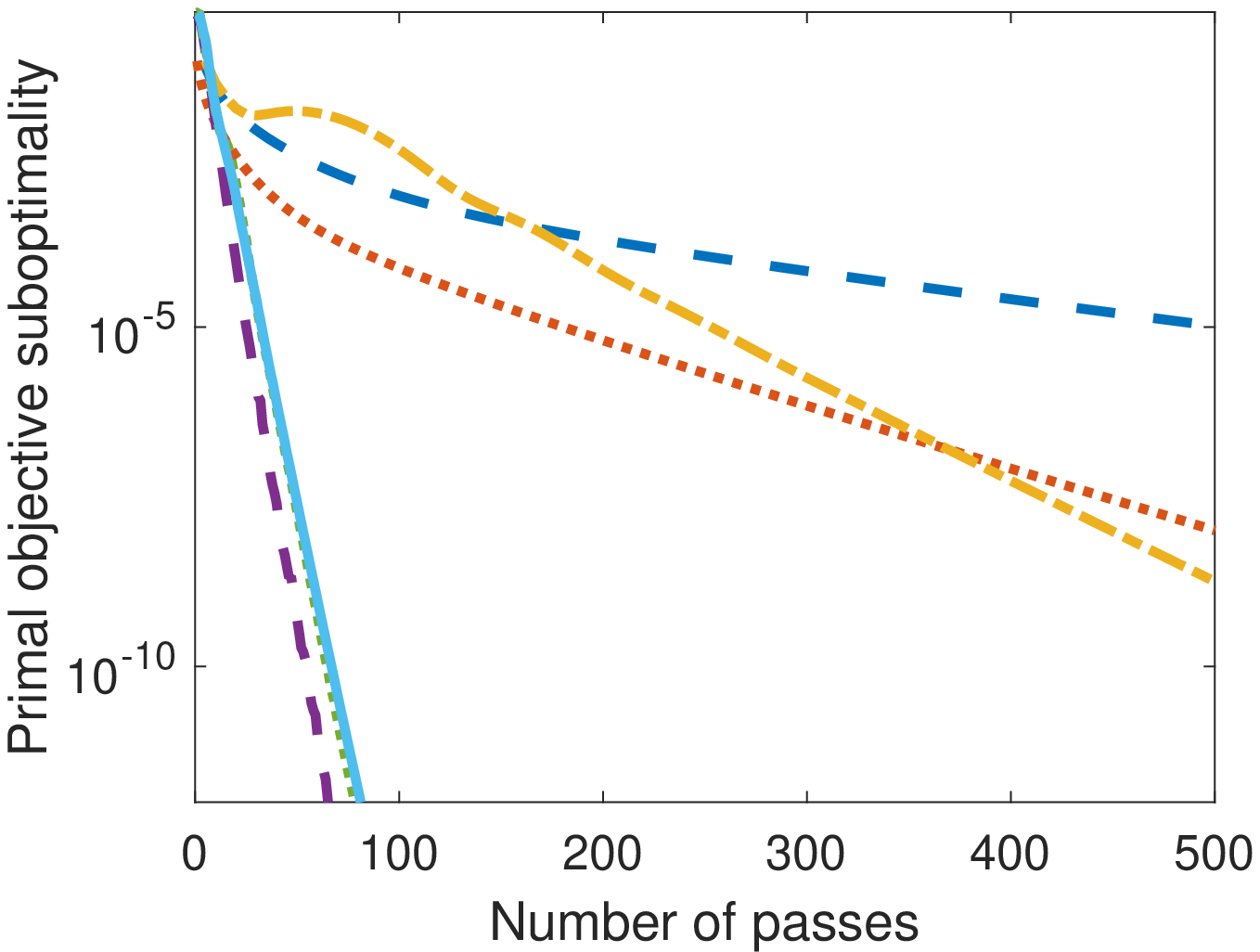}%
\includegraphics[width=0.33 \textwidth]{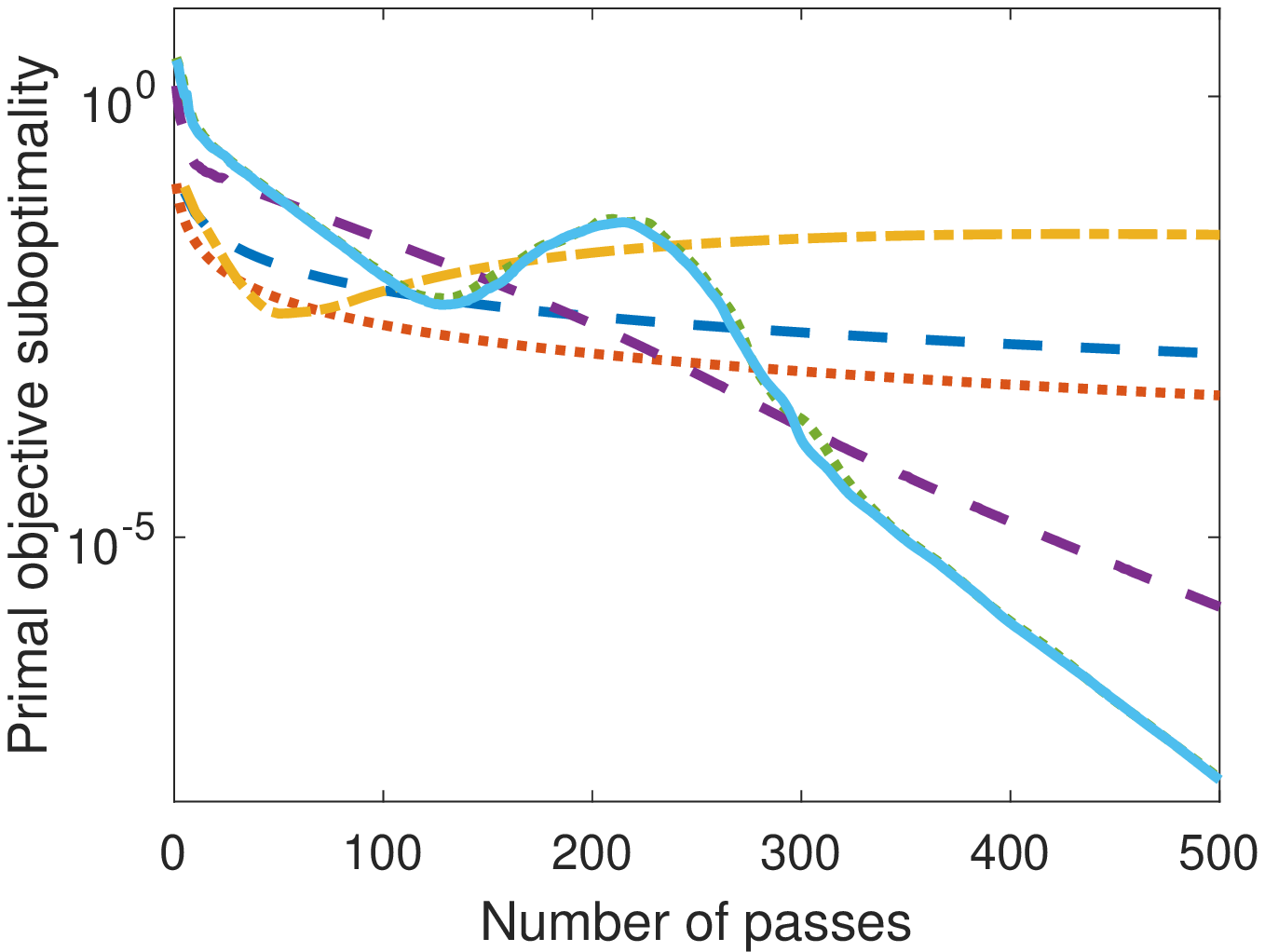}%
\end{center}
\makebox[0.34 \textwidth]{rcv1, $\lambda = 1/n$}\makebox[0.34 \textwidth]{rcv1, $\lambda = 10^{-2}/n$}\makebox[0.34 \textwidth]{rcv1, $\lambda = 10^{-4}/n$}
\caption{Comparison of randomized algorithms for logistic regression problems.}
\label{fig:logistic}
\end{figure*}

\section{Conclusions}
We have shown that primal-dual first-order algorithms are capable of
exploiting strong convexity from data, if the algorithmic parameters are
chosen appropriately. While they may depends on problem dependent constants
that are unknown, we developed heuristics for adapting the parameters 
on the fly and obtained improved performance in experiments.
It looks that our theoretical characterization of the convergence rates
can be further improved, as  our experiments often demonstrate significant
acceleration in cases where our theory does not predict acceleration.

\clearpage
\balance
\bibliography{PDsaddle}
\bibliographystyle{icml2017}

\onecolumn
\appendix

In the following appendices,
we provide detailed proofs of theorems stated in the main paper. 
In Section~\ref{sec:fundamental_ineq} 
we first prove a basic inequality which is useful
throughout the rest of the convergence analysis. 
Section~\ref{sec:convergence_general} contains general analysis of the 
batch primal-dual algorithm that are common for proving both
Theorem~\ref{thm:BPD-convergence} and Theorem~\ref{thm:DF-BPD-convergence}.
Sections~\ref{sec:analysis-Euclidean}, \ref{sec:proof_dfbpd}, 
\ref{sec:proof_spdc} and~\ref{sec:proof_adf_spdc} give proofs for
Theorem~\ref{thm:BPD-convergence}, Theorem~\ref{thm:DF-BPD-convergence}, 
Theorem~\ref{thm:SPDC-convergence} and Theorem~\ref{thm:DF-SPDC-convergence},
respectively.
%We omit the proof of Theorem~\ref{thm:DF-SPDC-convergence} as it 
%can be done by combining the arguments for proving 
%Theorem~\ref{thm:SPDC-convergence} and Theorem~\ref{thm:DF-BPD-convergence}.
%Finally, Section \ref{sec:full_exp} contains more empirical results.

\section{A basic lemma}
\label{sec:fundamental_ineq}

\begin{lemma} \label{lem:sc-min-3point}
  Let $h$ be a strictly convex function and $\Bdiv_h$ be its Bregman divergence.
  Suppose $\psi$ is $\nu$-strongly convex with respect to $\Bdiv_h$ 
  and $1/\delta$-smooth (with respect to the Euclidean norm), and 
  \[
    \yhat = \argmin_{y\in C}~\bigl\{\psi(y) + \eta\Bdiv_h(y,\ybar)\bigr\},
  \]
  where $C$ is a compact convex set that lies within the relative interior
  of the domains of $h$ and $\psi$ 
  (i.e., both $h$ and $\psi$ are differentiable over~$C$). 
  Then for any $y\in C$ and $\rho\in[0,1]$, we have
  \[
    \psi(y)+\eta\Bdiv_h(y,\xbar) \geq \psi(\yhat)+\eta\Bdiv_h(\yhat,\ybar) 
    +\bigl(\eta+(1-\rho)\nu\bigr)\Bdiv_h(y,\yhat)
    +\frac{\rho\delta}{2}\left\|\nabla \psi(y)-\nabla \psi(\yhat)\right\|^2 .
  \]
\end{lemma}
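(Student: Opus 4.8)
The plan is to derive the inequality from the first-order optimality condition for the minimizer $\yhat$, combined with the strong convexity and smoothness hypotheses on $\psi$. First I would write down the optimality condition: since $\yhat$ minimizes $\psi(y)+\eta\Bdiv_h(y,\ybar)$ over the convex set $C$, and everything is differentiable over $C$, we have for every $y \in C$
\[
  \inner{\nabla\psi(\yhat)+\eta\bigl(\nabla h(\yhat)-\nabla h(\ybar)\bigr)}{y-\yhat} \geq 0.
\]
The gradient of $\Bdiv_h(\cdot,\ybar)$ at $\yhat$ is exactly $\nabla h(\yhat)-\nabla h(\ybar)$, which is where this expression comes from.

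Next I would exploit the $\nu$-strong convexity of $\psi$ with respect to $\Bdiv_h$. This gives
\[
  \psi(y) \geq \psi(\yhat)+\inner{\nabla\psi(\yhat)}{y-\yhat}+\nu\,\Bdiv_h(y,\yhat).
\]
The key algebraic device is the three-point identity for Bregman divergences,
\[
  \Bdiv_h(y,\ybar) = \Bdiv_h(\yhat,\ybar)+\Bdiv_h(y,\yhat)
      +\inner{\nabla h(\yhat)-\nabla h(\ybar)}{y-\yhat}.
\]
Adding $\eta$ times this identity to the strong convexity bound, the cross terms involving $\inner{\nabla h(\yhat)-\nabla h(\ybar)}{y-\yhat}$ combine with $\inner{\nabla\psi(\yhat)}{y-\yhat}$ to reproduce the left side of the optimality condition, which is nonnegative. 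After this cancellation I would obtain
\[
  \psi(y)+\eta\Bdiv_h(y,\ybar) \geq \psi(\yhat)+\eta\Bdiv_h(\yhat,\ybar)
      +(\eta+\nu)\Bdiv_h(y,\yhat).
\]
This is the $\rho=0$ version of the claim.

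The remaining step, and the part I expect to be the main obstacle, is recovering the gradient term $\frac{\rho\delta}{2}\nbr{\nabla\psi(y)-\nabla\psi(\yhat)}^2$ at the cost of shrinking the coefficient of $\Bdiv_h(y,\yhat)$ from $\eta+\nu$ to $\eta+(1-\rho)\nu$. The idea is to spend a fraction $\rho$ of the available strong-convexity slack $\nu\,\Bdiv_h(y,\yhat)$ to produce a quadratic-in-gradient lower bound. Specifically, I would not use the full $\nu$-strong-convexity bound but rather split it: keep $(1-\rho)\nu\,\Bdiv_h(y,\yhat)$ as is, and use the remaining portion together with the $1/\delta$-smoothness of $\psi$. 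Here smoothness yields the standard co-coercivity-type estimate relating $\Bdiv_h(y,\yhat)$ (or the Euclidean gap) to $\nbr{\nabla\psi(y)-\nabla\psi(\yhat)}^2$; the precise constant $\delta$ comes from $1/\delta$-smoothness implying $\psi(y)\ge\psi(\yhat)+\inner{\nabla\psi(\yhat)}{y-\yhat}+\frac{\delta}{2}\nbr{\nabla\psi(y)-\nabla\psi(\yhat)}^2$, a known refinement of the descent lemma for smooth convex functions. The delicate bookkeeping is ensuring the two lower bounds for $\psi(y)$ can be convexly combined with weights $1-\rho$ and $\rho$ so that exactly $\frac{\rho\delta}{2}\nbr{\nabla\psi(y)-\nabla\psi(\yhat)}^2$ survives while the Bregman coefficient lands at $\eta+(1-\rho)\nu$; I would track these weights carefully and appeal to convexity of $\psi$ (equivalently, that both inequalities hold simultaneously) to take the $\rho$-weighted combination before inserting the optimality condition.
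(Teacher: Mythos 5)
Your proposal is correct and follows essentially the same route as the paper's proof: the first-order optimality condition, the three-point identity for $\Bdiv_h$, and a $\rho$-weighted convex combination of the strong-convexity lower bound with the smoothness-based lower bound $\psi(y)\ge\psi(\yhat)+\inner{\nabla\psi(\yhat)}{y-\yhat}+\frac{\delta}{2}\nbr{\nabla\psi(y)-\nabla\psi(\yhat)}^2$ (Theorem~2.1.5 of Nesterov). The step you flag as the main obstacle is exactly the paper's argument, so no further work is needed.
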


\begin{proof}
The minimizer $\yhat$ satisfies the following first-order optimality condition:
\[
  \left\langle \nabla \psi(\yhat)+\eta\nabla\Bdiv_h(\yhat,\ybar),\; y-\yhat
  \right\rangle \geq 0, \quad \forall\; y\in C.
\]
Here $\nabla \Bdiv$ denotes partial gradient of the Bregman divergence 
with respect to its first argument, i.e.,
$\nabla\Bdiv(\yhat,\ybar) = \nabla h(\yhat) - \nabla h(\ybar)$.
So the above optimality condition is the same as
\begin{equation}\label{eqn:sc-min-opt-C}
  \bigl\langle \nabla \psi(\yhat)+\eta(\nabla h(\yhat)-\nabla h(\ybar)),\;
  y-\yhat \bigr\rangle \geq 0, \quad \forall\; y\in C.
\end{equation}
Since $\psi$ is $\nu$-strongly convex with respect to $\Bdiv_h$
and $1/\delta$-smooth, we have 
\begin{align*}
\psi(y) & \geq \psi(\yhat) + \langle \nabla\psi(\yhat),y-\xhat \rangle 
+ \nu \Bdiv_h(y,\yhat), \\
\psi(y) & \geq \psi(\yhat) + \langle \nabla\psi(\yhat),y-\yhat \rangle 
+ \frac{\delta}{2} \bigl\|\nabla\psi(y)-\nabla\psi(\yhat)\bigr\|^2.
\end{align*}
For the second inequality, see, e.g., Theorem~2.1.5 in \citet{Nesterov2004book}.
Multiplying the two inequalities above by $(1-\rho)$ and $\rho$ respectively
and adding them together, we have
\begin{align*}
\psi(y) & \geq \psi(\yhat) + \langle \nabla\psi(\yhat),y-\yhat \rangle 
+ (1-\rho)\nu \Bdiv_h(y,\yhat) 
+ \frac{\rho\delta}{2} \bigl\|\nabla\psi(y)-\nabla\psi(\yhat)\bigr\|^2.
\end{align*}
The Bregman divergence $\Bdiv_h$ satisfies the following equality:
\[
  \Bdiv_h(y,\ybar) = \Bdiv_h(y,\yhat) + \Bdiv_h(\yhat,\ybar)
  +\bigl\langle \nabla h(\yhat)-\nabla h(\ybar),\; y-\yhat \bigr\rangle.
\]
We multiply this equality by~$\eta$ and add it to the last inequality to obtain
\begin{eqnarray*}
  \psi(y) + \eta\Bdiv_h(y,\ybar)
&\geq& \psi(\yhat) + \eta \Bdiv_h(y,\yhat) + \bigl(\eta+(1-\rho)\nu\bigr) 
       \Bdiv_h(\yhat,\ybar) 
     + \frac{\rho\delta}{2} \bigl\|\nabla\psi(y)-\nabla\psi(\yhat)\bigr\|^2 \\
&& +\bigl\langle \nabla \psi(\yhat)+\eta(\nabla h(\yhat)-\nabla h(\ybar)),\; 
  y-\yhat \bigr\rangle.
\end{eqnarray*}
Using the optimality condition in~\eqref{eqn:sc-min-opt-C}, the last term
of inner product is nonnegative and thus can be dropped, 
which gives the desired inequality.
\end{proof}

\section{Common Analysis of Batch Primal-Dual Algorithms}
\label{sec:convergence_general}

We consider the general primal-dual update rule as:
\begin{center}
\begin{boxedminipage}{0.99\textwidth}
\vspace{0.2em}
\textbf{Iteration:} 
$(\xhat,\yhat) = \mathrm{PD}_{\tau,\sigma}(\xbar,\ybar,\xtld,\ytld)$
\vspace{-1ex}
\begin{eqnarray}
  \xhat &=& \argmin_{x\in\reals^d} \left\{ g(x) + \ytld^T Ax + \frac{1}{2\tau} \|x-\xbar\|^2 \right\},  \label{eqn:PD-x-update} \\
  \yhat &=& \argmin_{y\in\reals^n} \left\{ f^*(y) - y^T A\xtld + \frac{1}{\sigma} \Bdiv(y,\ybar) \right\}.  \label{eqn:PD-y-update} 
\end{eqnarray}
\end{boxedminipage}
\end{center}

Each iteration of Algorithm~\ref{alg:BPD} is equivalent to the following 
specification of $\mathrm{PD}_{\tau,\sigma}$:
\begin{eqnarray}
 && \xhat = \xtp, \qquad \xbar = \xt, \qquad \xtld=\xt+\theta(\xt-\xtm), 
 \nonumber\\ 
 && \yhat = \ytp, \qquad \ybar = \yt, \qquad \ytld=\ytp .
 \label{eqn:y-first-subs}
\end{eqnarray}

Besides Assumption~\ref{asmp:batch-PD}, we also assume that
$f^*$ is $\nu$-strongly convex with respect to a kernel function $h$, 
    i.e., 
    \[
      f^*(y') - f^*(y) - \langle\nabla f^*(y),y'-y\rangle \geq \nu\Bdiv_h(y',y),
    \]
    where 
    $\Bdiv_h$ is the Bregman divergence defined as
    \[
      \Bdiv_h(y',y) = h(y') - h(y) - \langle \nabla h(y), y'-y\rangle. 
    \]
We assume that $h$ is $\gamma'$-strongly convex and $1/\delta'$-smooth.
Depending on the kernel function~$h$, this assumption on $f^*$ may
impose additional restrictions on~$f$. In this paper, we are 
mostly interested in two special cases: $h(y)=(1/2)\|y\|^2$ 
and $h(y)=f^*(y)$ (for the latter we always have $\nu=1$).
From now on, we will omit the subscript~$h$ and use $\Bdiv$ denote the Bregman divergence.

Under the above assumptions, any solution $(\xopt, \yopt)$ to the 
saddle-point problem~\eqref{eqn:batch-saddle} satisfies the 
optimality condition:
\begin{eqnarray} 
  -A^T y^\star &\in& \partial g(x^\star), \label{eqn:saddle-x-optimality} \\
  A x^\star &=& \nabla f^*(y^\star). \label{eqn:saddle-y-optimality}
\end{eqnarray}
The optimality conditions for the updates described in 
equations~\eqref{eqn:PD-x-update} and~\eqref{eqn:PD-y-update} are
\begin{eqnarray}
  -A^T \ytld + \frac{1}{\tau}(\xbar-\xhat) &\in& \partial g(\xhat),
  \label{eqn:PD-x-optimality} \\
  A \xtld - \frac{1}{\sigma}\bigl(\nabla h(\yhat)-\nabla h(\ybar)\bigr) 
  &=& \nabla f^*(\yhat) .
  \label{eqn:PD-y-optimality}
\end{eqnarray}

Applying Lemma~\ref{lem:sc-min-3point} to the dual minimization step
in~\eqref{eqn:PD-y-update} with $\psi(y)=f^*(y)-y^T A\xtld$, $\eta=1/\sigma$
$y=\yopt$ and $\rho=1/2$, we obtain
\begin{eqnarray}\label{eqn:sc-min-dual}
  f^*(\yopt)-\yopt^T A\xtld+\frac{1}{\sigma}\Bdiv(\yopt,\ybar)
&\geq& f^*(\yhat)-\yhat^T A\xtld+\frac{1}{\sigma}\Bdiv(\yhat,\ybar) \nonumber\\
&& +\Bigl(\frac{1}{\sigma}+\frac{\nu}{2}\Bigr) \Bdiv(\yopt,\yhat)
  + \frac{\delta}{4} \bigl\|\nabla f^*(\yopt)-\nabla f^*(\yhat)\bigr\|^2.
\end{eqnarray}
Similarly, for the primal minimization step in~\eqref{eqn:PD-x-update}, 
we have (setting $\rho=0$)
\begin{equation}\label{eqn:sc-min-primal}
  g(\xopt)+\ytld^T A\xopt+\frac{1}{2\tau}\|\xopt-\xbar\|^2
  \geq
  g(\xhat)+\ytld^T A\xhat+\frac{1}{2\tau}\|\xhat-\xbar\|^2
  + \frac{1}{2}\Bigl(\frac{1}{\tau}+\lambda\Bigr) \|\xopt-\xhat\|^2.
\end{equation}
Combining the two inequalities above with the definition 
$\Lagr(x,y)=g(x)+y^T Ax -f^*(y)$,
we get
\begin{eqnarray*}
\Lagr(\xhat,\yopt) - \Lagr(\xopt,\yhat)
&=& g(\xhat)+\yopt^T A\xhat-f^*(\yopt)-g(\xopt)-\yhat^T A\xopt + f^*(\yhat)\\
&\leq& \frac{1}{2\tau}\|\xopt-\xbar\|^2 + \frac{1}{\sigma}\Bdiv(\yopt,\ybar)
  - \frac{1}{2}\Bigl(\frac{1}{\tau}+\lambda\Bigr) \|\xopt-\xhat\|^2
  - \Bigl(\frac{1}{\sigma}+\frac{\nu}{2}\Bigr) \Bdiv(\yopt,\yhat) \\
&& -\,\frac{1}{2\tau}\|\xhat-\xbar\|^2 - \frac{1}{\sigma}\Bdiv(\yhat,\ybar) 
  - \frac{\delta}{4} \bigl\|\nabla f^*(\yopt)-\nabla f^*(\yhat)\bigr\|^2\\
&& +\, \yopt^T A\xhat - \yhat^T A\xopt + \ytld^T A\xopt 
  - \ytld^T A\xhat - \yopt^T A\xtld + \yhat^T A \xtld .
\end{eqnarray*}
We can simplify the inner product terms as
\[
\yopt^T A\xhat - \yhat^T A\xopt + \ytld^T A\xopt 
- \ytld^T A\xhat - \yopt^T A\xtld + \yhat^T A \xtld
= (\yhat-\ytld)^TA(\xhat-\xopt) - (\yhat-\yopt)^T A(\xhat-\xtld).
\]
Rearranging terms on the two sides of the inequality, we have
\begin{eqnarray*}
\frac{1}{2\tau}\|\xopt-\xbar\|^2 + \frac{1}{\sigma}\Bdiv(\yopt,\ybar)
&\geq& \Lagr(\xhat,\yopt) - \Lagr(\xopt,\yhat) \\
&& +\, \frac{1}{2}\Bigl(\frac{1}{\tau}+\lambda\Bigr) \|\xopt-\xhat\|^2
+ \Bigl(\frac{1}{\sigma}+\frac{\nu}{2}\Bigr) \Bdiv(\yopt,\yhat) \\
&& +\,\frac{1}{2\tau}\|\xhat-\xbar\|^2 + \frac{1}{\sigma}\Bdiv(\yhat,\ybar) 
  + \frac{\delta}{4} \bigl\|\nabla f^*(\yopt)-\nabla f^*(\yhat)\bigr\|^2\\
&& +\, (\yhat-\yopt)^T A(\xhat-\xtld) - (\yhat-\ytld)^TA(\xhat-\xopt).
\end{eqnarray*}
Applying the substitutions in~\eqref{eqn:y-first-subs} yields
\begin{eqnarray}
\frac{1}{2\tau}\|\xopt-\xt\|^2 + \frac{1}{\sigma}\Bdiv(\yopt,\yt)
&\geq& \Lagr(\xtp,\yopt) - \Lagr(\xopt,\ytp) \nonumber \\
&& +\, \frac{1}{2}\Bigl(\frac{1}{\tau}+\lambda\Bigr) \|\xopt-\xtp\|^2
+ \Bigl(\frac{1}{\sigma}+\frac{\nu}{2}\Bigr) \Bdiv(\yopt,\ytp) \nonumber \\
&& +\,\frac{1}{2\tau}\|\xtp-\xt\|^2 + \frac{1}{\sigma}\Bdiv(\ytp,\yt) 
  + \frac{\delta}{4} \bigl\|\nabla f^*(\yopt)-\nabla f^*(\ytp)\bigr\|^2
  \nonumber \\
&& +\, (\ytp-\yopt)^T A\bigl(\xtp-(\xt+\theta(\xt-\xtm)\bigr).
\label{eqn:PD-bound-y-temp}
\end{eqnarray}
We can rearrange the inner product term in~\eqref{eqn:PD-bound-y-temp} as
\begin{eqnarray*}
&& (\ytp-\yopt)^T A\bigl(\xtp-(\xt+\theta(\xt-\xtm)\bigr) \\
&=& (\ytp-\yopt)^T A(\xtp-\xt) - \theta(\yt-\yopt)^T A(\xt-\xtm)
 - \theta(\ytp-\yt)^T A(\xt-\xtm) .
\end{eqnarray*}
Using the optimality conditions in~\eqref{eqn:saddle-y-optimality}
and~\eqref{eqn:PD-y-optimality}, we can also bound 
$\|\nabla f^*(\yopt)-\nabla f^*(\ytp)\|^2$:
\begin{eqnarray}
&& \bigl\|\nabla f^*(\yopt)-\nabla f^*(\ytp) \bigr\|^2 \nonumber\\
&=& \Bigl\| A\xopt - A\bigl(\xt+\theta(\xt-\xtm)\bigr) + \frac{1}{\sigma}
    \bigl(\nabla h(\ytp) - \nabla h(\yt)\bigr) \Bigr\|^2 \nonumber\\ 
&\geq& \Bigl(1-\frac{1}{\alpha}\Bigr) \bigl\|A(\xopt-\xt)\bigr\|^2
 - (\alpha-1) \Bigl\| \theta A (\xt-\xtm)\bigr) - \frac{1}{\sigma}
    \bigl(\nabla h(\ytp) - \nabla h(\yt)\bigr) \Bigr\|^2, \nonumber
\label{eqn:conj-grad-bound}
\end{eqnarray}
where $\alpha>1$. 
With the definition $\mu=\sqrt{\lambdamin(A^T A)}$, we also have
$\|A(\xopt-\xt)\|^2\geq\mu^2\|\xopt-\xt\|^2$.
Combining them with the inequality~\eqref{eqn:PD-bound-y-temp} leads to
\begin{eqnarray}
&& \frac{1}{2\tau}\|\xopt-\xt\|^2 + \frac{1}{\sigma}\Bdiv(\yopt,\yt)
+ \theta(\yt-\yopt)^T A(\xt-\xtm) \nonumber \\
&\geq& \Lagr(\xtp,\yopt) - \Lagr(\xopt,\ytp) \nonumber \\
&& +\, \frac{1}{2}\Bigl(\frac{1}{\tau}+\lambda\Bigr) \|\xopt-\xtp\|^2
+ \Bigl(\frac{1}{\sigma}+\frac{\nu}{2}\Bigr) \Bdiv(\yopt,\ytp) 
  + (\ytp-\yopt)^T A(\xtp-\xt) \nonumber \\
&& +\,\frac{1}{2\tau}\|\xtp-\xt\|^2 + \frac{1}{\sigma}\Bdiv(\ytp,\yt) 
 - \theta(\ytp-\yt)^T A(\xt-\xtm) \nonumber \\
&& + \Bigl(1-\frac{1}{\alpha}\Bigr)\frac{\delta\mu^2}{4}\|\xopt-\xt\|^2
 - (\alpha-1)\frac{\delta}{4}\Bigl\| \theta A(\xt-\xtm)\bigr)-\frac{1}{\sigma}
    \bigl(\nabla h(\ytp) - \nabla h(\yt)\bigr) \Bigr\|^2 .
\label{eqn:PD-bound-y-first}
\end{eqnarray}

\section{Proof of Theorem \ref{thm:BPD-convergence}}
\label{sec:analysis-Euclidean}

Let the kernel function be $h(y)=(1/2)\|y\|^2$. 
In this case, we have $\Bdiv(y',y)=(1/2)\|y'-y\|^2$ and $\nabla h(y)=y$. 
Moreover, $\gamma'=\delta'=1$ and $\nu=\gamma$.
Therefore, the inequality~\eqref{eqn:PD-bound-y-first} becomes
\begin{eqnarray}
&& \frac{1}{2}\biggl(\frac{1}{\tau}-\Bigl(1-\frac{1}{\alpha}\Bigr)
\frac{\delta\mu^2}{2}\biggr) \|\xopt-\xt\|^2 + \frac{1}{2\sigma}\|\yopt-\yt\|^2
  + \theta(\yt-\yopt)^T A(\xt-\xtm) \nonumber\\
&\geq& \Lagr(\xtp,\yopt) - \Lagr(\xopt,\ytp) \nonumber \\
&& +\, \frac{1}{2}\Bigl(\frac{1}{\tau}+\lambda\Bigr) \|\xopt-\xtp\|^2
+ \frac{1}{2}\Bigl(\frac{1}{\sigma}+\frac{\gamma}{2}\Bigr) \|\yopt-\ytp\|^2 
  + (\ytp-\yopt)^T A(\xtp-\xt) \nonumber \\
&& +\,\frac{1}{2\tau}\|\xtp-\xt\|^2 
 + \underline{\frac{1}{2\sigma}\|\ytp-\yt\|^2-\theta(\ytp-\yt)^T A(\xt-\xtm)}
 \nonumber \\
&& -\,(\alpha-1)\frac{\delta}{4}\Bigl\|\theta A(\xt-\xtm)\bigr)
   -\frac{1}{\sigma} (\ytp - \yt) \Bigr\|^2 .
\label{eqn:PD-bound-y-Euclid}
\end{eqnarray}
Next we derive another form of the underlined items above:
\begin{eqnarray*}
&& \frac{1}{2\sigma}\|\ytp-\yt\|^2 - \theta (\ytp-\yt)^T A (\xt-\xtm) \\
&=& \frac{\sigma}{2}\left( \frac{1}{\sigma^2}\|\ytp-\yt\|^2
  - \frac{\theta}{\sigma}(\ytp-\yt)^T A (\xt-\xtm) \right) \\
&=& \frac{\sigma}{2}\left( 
  \Bigl\|\theta A(\xt-\xtm)-\frac{1}{\sigma}(\ytp-\yt)\Bigr\|^2 
  -\theta^2 \|A(\xt-\xtm)\|^2 \right) \\
&\geq& \frac{\sigma}{2} 
  \Bigl\|\theta A(\xt-\xtm)-\frac{1}{\sigma}(\ytp-\yt)\Bigr\|^2 
  -\frac{\sigma\theta^2 L^2}{2} \|\xt-\xtm\|^2 ,
\end{eqnarray*}
where in the last inequality we used $\|A\|\leq L$ and hence
$\|A(\xt-\xtm)\|^2 \leq L^2\|\xt-\xtm\|^2$.
Combining with inequality~\eqref{eqn:PD-bound-y-Euclid}, we have
\begin{eqnarray}
&& \frac{1}{2}\left(\frac{1}{\tau}-\Bigl(1-\frac{1}{\alpha}\Bigr)
  \frac{\delta\mu^2}{2}\right)\|\xt-\xopt\|^2 + \frac{1}{2\sigma}
  \|\yt-\yopt\|^2 + \theta (\yt-\yopt)^T A(\xt-\xtm)  
  +\frac{\sigma\theta^2 L^2}{2} \|\xt-\xtm\|^2  \nonumber \\
&\geq& \Lagr(\xtp,\yopt) - \Lagr(\xopt,\ytp) \nonumber \\
&&+\,\frac{1}{2}\left(\frac{1}{\tau}+\lambda\right) \|\xtp-\xopt\|^2 
  + \frac{1}{2}\left(\frac{1}{\sigma}+\frac{\gamma}{2}\right)\|\ytp-\yopt\|^2
  + (\ytp-\yopt)^T A(\xtp-\xt) + \frac{1}{2\tau}\|\xtp-\xt\|^2  \nonumber \\
&& + \left(\frac{\sigma}{2}-(\alpha-1) \frac{\delta}{4}\right) 
  \Bigl\|\theta A(\xt-\xtm)-\frac{1}{\sigma}(\ytp-\yt)\Bigr\|^2 .
  \label{eqn:PD-complete-squares}
\end{eqnarray}
We can remove the last term in the above inequality as long as
its coefficient is nonnegative, i.e.,  
\[
  \frac{\sigma}{2}-(\alpha-1) \frac{\delta}{4} \geq 0.
\]
In order to maximize $1-1/\alpha$, we take the equality and solve for 
the largest value of~$\alpha$ allowed, which results in
\[
  \alpha = 1 + \frac{2\sigma}{\delta} , \qquad
  1-\frac{1}{\alpha} 
  = \frac{2\sigma}{2\sigma+\delta}.
\]
Applying these values in~\eqref{eqn:PD-complete-squares} gives
\begin{eqnarray}
&& \frac{1}{2}\left(\frac{1}{\tau}-\frac{\sigma\delta\mu^2}{2\sigma+\delta}
  \right) \|\xt-\xopt\|^2 + \frac{1}{2\sigma}
  \|\yt-\yopt\|^2 + \theta (\yt-\yopt)^T A(\xt-\xtm)  
  +\frac{\sigma\theta^2 L^2}{2} \|\xt-\xtm\|^2  \nonumber \\
&\geq& \Lagr(\xtp,\yopt) - \Lagr(\xopt,\ytp) \nonumber \\
&& + \frac{1}{2}\left(\frac{1}{\tau}+\lambda\right) \|\xtp-\xopt\|^2 
  + \frac{1}{2}\left(\frac{1}{\sigma}+\frac{\gamma}{2}\right)\|\ytp-\yopt\|^2
  + (\ytp-\yopt)^T A(\xtp-\xt) + \frac{1}{2\tau}\|\xtp-\xt\|^2. \nonumber \\
  \label{eqn:PD-xy-compare}
\end{eqnarray}

We use $\Dtp$ to denote the last row in~\eqref{eqn:PD-xy-compare}.
Equivalently, we define
\begin{eqnarray*}
\Dt 
&=& \frac{1}{2}\Bigl(\frac{1}{\tau}+\lambda\Bigr) \|\xopt-\xt\|^2
  + \frac{1}{2}\Bigl(\frac{1}{\sigma}+\frac{\gamma}{2}\Bigr) \|\yopt-\yt\|^2
  + (\yt-\yopt)^T A(\xt-\xtm) + \frac{1}{2\tau}\|\xt-\xtm\|^2 \\
&=& \frac{1}{2}\Bigl(\frac{1}{\tau}+\lambda\Bigr) \|\xopt-\xt\|^2
  + \frac{\gamma}{4}\|\yopt-\yt\|^2 
  + \frac{1}{2}\left[\begin{array}{c} \xt-\xtm\\ \yopt-\yt \end{array}\right]^T
  \left[\begin{array}{cc} \frac{1}{\tau}I & -A^T\\ -A & \frac{1}{\sigma}
  \end{array} \right]
  \left[\begin{array}{c} \xt-\xtm \\ \yopt-\yt \end{array}\right].
\end{eqnarray*}
The quadratic form in the last term is nonnegative if the matrix
\[
M = \left[\begin{array}{cc} \frac{1}{\tau}I & -A^T\\ -A & \frac{1}{\sigma}
  \end{array} \right]
\]
is positive semidefinite, for which a sufficient condition is
$\tau\sigma\leq 1/L^2$.
Under this condition,
\begin{equation}\label{eqn:Delta-nonneg-Euclid}
\Dt ~\geq~ \frac{1}{2}\Bigl(\frac{1}{\tau}+\lambda\Bigr) \|\xopt-\xt\|^2
+ \frac{\gamma}{4} \|\yopt-\yt\|^2 ~\geq~ 0.
\end{equation}
If we can to choose $\tau$ and $\sigma$ so that
\begin{equation}\label{eqn:tau-sigma-cond-Euclid}
\frac{1}{\tau}-\frac{\sigma \delta\mu^2}{2\sigma+\delta}
\leq \theta\Bigl(\frac{1}{\tau}+\lambda\Bigr), \qquad
\frac{1}{\sigma} \leq \theta \Bigl(\frac{1}{\sigma}+\frac{\gamma}{2}\Bigr),
\qquad
\frac{\sigma\theta^2 L^2}{2} \leq \theta\frac{1}{2\tau} ,
\end{equation}
then, according to~\eqref{eqn:PD-xy-compare}, we have
\[
  \Dtp + \Lagr(\xtp,\yopt) - \Lagr(\xopt,\ytp) ~\leq~ \theta \Dt .
\]
Because $\Dt\geq0$ and $\Lagr(\xt,\yopt) - \Lagr(\xopt,\yt) \geq 0$ 
for any $t\geq 0$, we have
\[
  \Dtp ~\leq~ \theta \Dt,
\]
which implies
\[
  \Dt ~\leq~ \theta^t \Dini
\]
and
\[
  \Lagr(\xt,\yopt) - \Lagr(\xopt,\yt) ~\leq~ \theta^t \Dini .
\]
Let $\theta_x$ and $\theta_y$ be two contraction factors determined by
the first two inequalities in~\eqref{eqn:tau-sigma-cond-Euclid}, i.e.,
\begin{eqnarray*}
    \theta_x 
    &=& \biggl(\frac{1}{\tau}-\frac{\sigma \delta\mu^2}{2\sigma+\delta}\biggr)
    \bigg/ \Bigl(\frac{1}{\tau}+\lambda\Bigr)
    ~=~ \biggl(1-\frac{\tau\sigma\delta\mu^2}{2\sigma+\delta}\biggr)\frac{1}{1+\tau\lambda}, \\ 
    \theta_y
    &=&\frac{1}{\sigma} \bigg/\Bigl(\frac{1}{\sigma}+\frac{\gamma}{2}\Bigr)
    ~=~ \frac{1}{1+\sigma\gamma/2}.
\end{eqnarray*}
Then we can let $\theta=\max\{\theta_x,\theta_y\}$. 
We note that any $\theta<1$ would satisfy the last condition 
in~\eqref{eqn:tau-sigma-cond-Euclid} provided that
\[
  \tau\sigma ~=~ \frac{1}{L^2} ,
\]
which also makes the matrix~$M$ positive semidefinite and thus
ensures the inequality~\eqref{eqn:Delta-nonneg-Euclid}. 

Among all possible pairs $\tau,\sigma$ that satisfy $\tau\sigma=1/L^2$,
we choose 
\begin{equation}\label{eqn:tau-sigma-Euclid}
  \tau = \frac{1}{L}\sqrt{\frac{\gamma}{\lambda+\delta\mu^2}}, \qquad
  \sigma = \frac{1}{L}\sqrt{\frac{\lambda+\delta\mu^2}{\gamma}},
\end{equation}
which give the desired results of Theorem~\ref{thm:BPD-convergence}.

\section{Proof of Theorem~\ref{thm:DF-BPD-convergence}}
\label{sec:proof_dfbpd}

If we choose $h=f^*$, then 
\begin{itemize}
  \item $h$ is $\gamma$-strongly convex and $1/\delta$-smooth, 
    i.e., $\gamma'=\gamma$ and $\delta'=\delta$;
\item $f^*$ is $1$-strongly convex with respect to~$h$, i.e., $\nu=1$.
\end{itemize}
For convenience, we repeat inequality~\eqref{eqn:PD-bound-y-first} here:
\begin{eqnarray}
&& \frac{1}{2\tau}\|\xopt-\xt\|^2 + \frac{1}{\sigma}\Bdiv(\yopt,\yt)
+ \theta(\yt-\yopt)^T A(\xt-\xtm) \nonumber \\
&\geq& \Lagr(\xtp,\yopt) - \Lagr(\xopt,\ytp) \nonumber \\
&& +\, \frac{1}{2}\Bigl(\frac{1}{\tau}+\lambda\Bigr) \|\xopt-\xtp\|^2
+ \Bigl(\frac{1}{\sigma}+\frac{\nu}{2}\Bigr) \Bdiv(\yopt,\ytp) 
  + (\ytp-\yopt)^T A(\xtp-\xt) \nonumber \\
&& +\,\frac{1}{2\tau}\|\xtp-\xt\|^2 + \frac{1}{\sigma}\Bdiv(\ytp,\yt) 
 - \theta(\ytp-\yt)^T A(\xt-\xtm) \nonumber \\
&& + \Bigl(1-\frac{1}{\alpha}\Bigr)\frac{\delta\mu^2}{4}\|\xopt-\xt\|^2
 - (\alpha-1)\frac{\delta}{4}\Bigl\| \theta A(\xt-\xtm)\bigr)-\frac{1}{\sigma}
    \bigl(\nabla h(\ytp) - \nabla h(\yt)\bigr) \Bigr\|^2 .
\label{eqn:PD-bound-y-repeat}
\end{eqnarray}
We first bound the Bregman divergence $\Bdiv(\ytp,\yt)$ 
using the assumption that the kernel $h$ is $\gamma$-strongly convex and 
$1/\delta$-smooth. Using similar arguments as in the proof of
Lemma~\ref{lem:sc-min-3point}, we have for any $\rho\in[0,1]$, 
\begin{eqnarray}
D(\ytp,\yt) 
&=& h(\ytp)-h(\yt)-\langle \nabla h(\yt), \ytp-\yt \rangle \nonumber \\
&\geq& (1-\rho) \frac{\gamma}{2} \|\ytp-\yt\|^2 
 + \rho\frac{\delta}{2} \bigl\|\nabla h(\ytp) - \nabla h(\yt) \bigr\|^2.
\label{eqn:Bdiv-ytp-yp}
\end{eqnarray}
For any $\beta>0$, we can lower bound the inner product term
\[
 -\theta(\ytp-\yt)^T A(\xt-\xtm) \geq - \frac{\beta}{2}\|\ytp-\yt\|^2 
 - \frac{\theta^2 L^2}{2\beta}\|\xt-\xtm\|^2.
\]
In addition, we have
\[
\Bigl\| \theta A(\xt-\xtm)\bigr)-\frac{1}{\sigma}
    \bigl(\nabla h(\ytp) - \nabla h(\yt)\bigr) \Bigr\|^2 
~\leq~ 2\theta^2 L^2 \|\xt-\xtm\|^2
+ \frac{2}{\sigma^2}\bigl\|\nabla h(\ytp) - \nabla h(\yt) \bigr\|^2.
\]
Combining these bounds with~\eqref{eqn:PD-bound-y-repeat} 
and~\eqref{eqn:Bdiv-ytp-yp} with $\rho=1/2$, we arrive at
\begin{eqnarray}
&& \frac{1}{2}\biggl(\frac{1}{\tau}-\Bigl(1-\frac{1}{\alpha}\Bigr)
\frac{\delta\mu^2}{2}\biggr) \|\xopt-\xt\|^2 + \frac{1}{\sigma}\Bdiv(\yopt,\yt)
  + \theta(\yt-\yopt)^T A(\xt-\xtm) \nonumber\\
&& +\Bigl(\frac{\theta^2 L^2}{2\beta} 
+ (\alpha-1)\frac{\delta\theta^2 L^2}{2}\Bigr) \|\xt-\xtm\|^2 \nonumber \\
&\geq& \Lagr(\xtp,\yopt) - \Lagr(\xopt,\ytp) \nonumber \\
&& +\, \frac{1}{2}\Bigl(\frac{1}{\tau}+\lambda\Bigr) \|\xopt-\xtp\|^2
+ \Bigl(\frac{1}{\sigma}+\frac{1}{2}\Bigr) \Bdiv(\yopt,\ytp) 
  + (\ytp-\yopt)^T A(\xtp-\xt) \nonumber \\
&& + \Bigl(\frac{\gamma}{4\sigma}-\frac{\beta}{2}\Bigr)\|\ytp-\yt\|^2 
 + \Bigl(\frac{\delta}{4\sigma} - \frac{(\alpha-1)\delta}{2\sigma^2}
  \Bigr) \bigl\|\nabla h(\ytp)-\nabla h(\yt)\bigr\|^2 \nonumber\\
&& +\,\frac{1}{2\tau}\|\xtp-\xt\|^2 .
\label{eqn:PD-bound-y-full}
\end{eqnarray}

We choose $\alpha$ and $\beta$ in~\eqref{eqn:PD-bound-y-full} to zero out 
the coefficients of $\|\ytp-\yt\|^2$ and $\|\nabla h(\ytp)-\nabla h(\yt)\|^2$:
\[
  \alpha = 1 + \frac{\sigma}{2}, 
  \qquad
  \beta = \frac{\gamma}{2\sigma}.
\]
Then the inequality~\eqref{eqn:PD-bound-y-full} becomes
\begin{eqnarray}
&& \frac{1}{2}\biggl(\frac{1}{\tau}-\frac{\sigma\delta\mu^2}{4+2\sigma}
  \biggr) \|\xopt-\xt\|^2 + \frac{1}{\sigma}\Bdiv(\yopt,\yt)
  + \theta(\yt-\yopt)^T A(\xt-\xtm) \nonumber\\
&& +\biggl(\frac{\sigma\theta^2 L^2}{\gamma} 
+ \frac{\delta\sigma\theta^2 L^2}{4} \biggr)\|\xt-\xtm\|^2 \nonumber \\
&\geq& \Lagr(\xtp,\yopt) - \Lagr(\xopt,\ytp) \nonumber \\
&& +\, \frac{1}{2}\Bigl(\frac{1}{\tau}+\lambda\Bigr) \|\xopt-\xtp\|^2
+ \biggl(\frac{1}{\sigma}+\frac{1}{2}\biggr) \Bdiv(\yopt,\ytp) 
  + (\ytp-\yopt)^T A(\xtp-\xt) \nonumber \\
&& +\,\frac{1}{2\tau}\|\xtp-\xt\|^2 . \nonumber
\end{eqnarray}
The coefficient of $\|\xt-\xtm\|^2$ can be bounded as
\[
  \frac{\sigma\theta^2 L^2}{\gamma} + \frac{\delta\sigma\theta^2 L^2}{4}
=\Bigl(\frac{1}{\gamma}+\frac{\delta}{4}\Bigr)\sigma\theta^2 L^2
= \frac{4+\gamma\delta}{4\gamma}\sigma\theta^2 L^2 
< \frac{2 \sigma\theta^2 L^2}{\gamma},
\]
where in the inequality we used $\gamma\delta\leq 1$.
Therefore we have
\begin{eqnarray}
&& \frac{1}{2}\biggl(\frac{1}{\tau}-\frac{\sigma\delta\mu^2}{4+2\sigma}
  \biggr) \|\xopt-\xt\|^2 + \frac{1}{\sigma}\Bdiv(\yopt,\yt)
  + \theta(\yt-\yopt)^T A(\xt-\xtm) 
  + \frac{2\sigma\theta^2 L^2}{\gamma} \|\xt-\xtm\|^2 \nonumber \\
&\geq& \Lagr(\xtp,\yopt) - \Lagr(\xopt,\ytp) \nonumber \\
&& +\, \frac{1}{2}\Bigl(\frac{1}{\tau}+\lambda\Bigr) \|\xopt-\xtp\|^2
+ \biggl(\frac{1}{\sigma}+\frac{1}{2}\biggr) \Bdiv(\yopt,\ytp) 
  + (\ytp-\yopt)^T A(\xtp-\xt) 
  + \frac{1}{2\tau}\|\xtp-\xt\|^2.  \nonumber
\end{eqnarray}
We use $\Dtp$ to denote the last row of the above inequality. 
Equivalently, we define
\[
\Dt = 
\frac{1}{2}\Bigl(\frac{1}{\tau}+\lambda\Bigr) \|\xopt-\xt\|^2
+ \biggl(\frac{1}{\sigma}+\frac{1}{2}\biggr) \Bdiv(\yopt,\yt) 
+ (\yt-\yopt)^T A(\xt-\xtm) + \frac{1}{2\tau}\|\xt-\xtm\|^2  .
\]
Since $h$ is $\gamma$-strongly convex, we have 
$\Bdiv(\yopt,\yt)\geq\frac{\gamma}{2}\|\yopt-\yt\|^2$, and thus
\begin{eqnarray*}
\Dt 
&\geq& \frac{1}{2}\Bigl(\frac{1}{\tau}+\lambda\Bigr) \|\xopt-\xt\|^2
  + \frac{1}{2}\Bdiv(\yopt,\yt) + \frac{\gamma}{2\sigma}\|\yt-\yopt\|^2
  + (\yt-\yopt)^T A(\xt-\xtm) + \frac{1}{2\tau}\|\xt-\xtm\|^2 \\ 
&=& \frac{1}{2}\Bigl(\frac{1}{\tau}+\lambda\Bigr) \|\xopt-\xt\|^2
  + \frac{1}{2}\Bdiv(\yopt,\yt) 
  + \frac{1}{2}\left[\begin{array}{c} \xt-\xtm\\ \yopt-\yt \end{array}\right]^T
  \left[\begin{array}{cc} \frac{1}{\tau}I & -A^T\\ -A & \frac{\gamma}{\sigma}
  \end{array} \right]
  \left[\begin{array}{c} \xt-\xtm \\ \yopt-\yt \end{array}\right].
\end{eqnarray*}
The quadratic form in the last term is nonnegative if
$\tau\sigma\leq\gamma/L^2$.
Under this condition,
\begin{equation}\label{eqn:Delta-nonneg}
\Dt ~\geq~ \frac{1}{2}\Bigl(\frac{1}{\tau}+\lambda\Bigr) \|\xopt-\xt\|^2
+ \frac{1}{2} \Bdiv(\yopt,\yt) ~\geq~ 0.
\end{equation}
If we can to choose $\tau$ and $\sigma$ so that
\begin{equation}\label{eqn:tau-sigma-cond-nu1}
\frac{1}{\tau}-\frac{\sigma \delta\mu^2}{4+2\sigma}
\leq \theta\Bigl(\frac{1}{\tau}+\lambda\Bigr), \qquad
\frac{1}{\sigma} \leq \theta \Bigl(\frac{1}{\sigma}+\frac{1}{2}\Bigr),
\qquad
\frac{2\sigma\theta^2 L^2}{\gamma} \leq \theta\frac{1}{2\tau} ,
\end{equation}
then we have
\[
  \Dtp + \Lagr(\xtp,\yopt) - \Lagr(\xopt,\ytp) ~\leq~ \theta \Dt .
\]
Because $\Dt\geq 0$ and $\Lagr(\xt,\yopt) - \Lagr(\xopt,\yt) \geq 0$ 
for any $t\geq 0$, we have
\[
  \Dtp ~\leq~ \theta \Dt,
\]
which implies
\[
  \Dt ~\leq~ \theta^t \Dini
\]
and
\[
  \Lagr(\xt,\yopt) - \Lagr(\xopt,\yt) ~\leq~ \theta^t \Dini .
\]

To satisfy the last condition in~\eqref{eqn:tau-sigma-cond-nu1} and also ensure
the inequality~\eqref{eqn:Delta-nonneg}, it suffices to have
\[
  \tau\sigma ~\leq~ \frac{\gamma}{4 L^2} .
\]
We choose 
\[
  \tau = \frac{1}{2L}\sqrt{\frac{\gamma}{\lambda+\delta\mu^2}}, \qquad
  \sigma = \frac{1}{2L}\sqrt{\gamma(\lambda+\delta\mu^2)}.
\]
With the above choice and assuming $\gamma(\lambda+\delta\mu^2)\ll L^2$, 
we have
\[
  \theta_y = \frac{\frac{1}{\sigma}}{\frac{1}{\sigma}+\frac{1}{2}}
  =\frac{1}{1+\sigma/2}
  =\frac{1}{1+\sqrt{\gamma(\lambda+\delta\mu^2)}/(4L)}
  \approx 1 - \frac{\sqrt{\gamma(\lambda+\delta\mu^2)}}{4L}.
\]
For the contraction factor over the primal variables, we have
\[
  \theta_x = \frac{\frac{1}{\tau}-\frac{\sigma \delta\mu^2}{4+2\sigma}}{
             \frac{1}{\tau}+\lambda}
  = \frac{1-\frac{\tau\sigma \delta\mu^2}{4+2\sigma}}{1+\tau\lambda}
  = \frac{1-\frac{\gamma\delta\mu^2}{4(4+2\sigma)L^2}}{1+\tau\lambda}
  \approx 1-\frac{\gamma\delta\mu^2}{16 L^2}
  - \frac{\lambda}{2L}\sqrt{\frac{\gamma}{\lambda+\delta\mu^2}}.
\]
This finishes the proof of Theorem~\ref{thm:DF-BPD-convergence}.

\section{Proof of Theorem \ref{thm:SPDC-convergence}}
\label{sec:proof_spdc}

\newcommand{\EE}{\mathbb{E}} % Expectation
We consider the SPDC algorithm in the Euclidean case with $h(x)=(1/2)\|x\|^2$.
The corresponding batch case analysis is given in 
Section~\ref{sec:analysis-Euclidean}. 
For each $i = 1,\ldots,n$, let $\yitld$ be
\[
    \yitld = \arg\min_{y} \left\{ \phi_i^*(y) + \frac{(y - \yit)^2}{2 \sigma} - y \langle a_i, \xtldt \rangle \right\}.
\]
Based on the first-order optimality condition, we have
\[
\langle a_i, \xtldt \rangle - \frac{(\yitld - \yit)}{\sigma} \in \phi_i^{*'}(\yitld).
\]
Also, since $\yiopt$ minimizes $\phi_i^*(y) - y \langle a_i, x^* \rangle$, 
we have
\[
\langle a_i, x^* \rangle \in \phi_i^{*'}(\yiopt).
\]
By Lemma \ref{lem:sc-min-3point} with $\rho = 1/2$, we have
\begin{align*}
- \yiopt \langle a_i, \xtldt \rangle + \phi_i^*(\yiopt) + 
\frac{(\yit - \yiopt)^2}{2\sigma } \geq& \left( \frac{1}{\sigma} + \frac{\gamma}{2} \right) \frac{(\yitld - \yiopt)^2 }{2} + \phi^*_i(\yitld) - \yitld \langle a_i, \xtldt \rangle \\
&+\frac{(\yitld - \yit)^2}{2 \sigma} + \frac{\delta}{4} ( \phi_i^{*'}(\yitld) - \phi_i^{*'}(\yiopt) )^2,
\end{align*}
and re-arranging terms, we get
\begin{align}
\frac{(\yit - \yiopt)^2}{2\sigma }  \geq& \left( \frac{1}{\sigma} + \frac{\gamma}{2} \right) \frac{(\yitld - \yiopt)^2 }{2} + \frac{(\yitld - \yit)^2}{2 \sigma}  - (\yitld - \yiopt) \langle a_i, \xtldt \rangle +  ( \phi_i^*(\yitld) -  \phi_i^*(\yiopt) ) \nonumber \\
&+ \frac{\delta}{4} ( \phi_i^{*'}(\yitld) - \phi_i^{*'}(\yiopt) )^2.
\label{eqn:y-main-ineq}
\end{align}
Notice that
\begin{align*}
\EE [ \yitp ] =& \frac{1}{n} \cdot \yitld + \frac{n-1}{n} \cdot \yit, \\
\EE [ (\yitp - \yiopt)^2 ] =& \frac{ (\yitld - \yiopt)^2 }{n} + \frac{ (n-1)(\yit - \yiopt)^2 }{n}, \\
\EE [ (\yitp - \yit)^2 ] =& \frac{( \yitld - \yit)^2}{n} , \\
\EE [ \phi_i^*(\yitp) ] =& \frac{1}{n}\cdot  \phi_i^*(\yitld) + \frac{n-1}{n} \cdot \phi_i^*(\yit).
\end{align*}
Plug the above relations into \eqref{eqn:y-main-ineq} and divide both sides by $n$, we have
\begin{align*}
\left(  \frac{1}{2 \sigma} + \frac{(n-1)\gamma}{4n} \right) (\yit - \yiopt)^2 \geq& 
\left(  \frac{1}{2 \sigma} + \frac{\gamma}{4} \right) \EE [ (\yitp - \yiopt)^2 ] + \frac{1}{2 \sigma} \EE [ (\yitp - \yit)^2 ] \\
&- \left(  \EE [ ( \yitp - \yit) ] + \frac{1}{n} (\yit - \yiopt)  \right) \langle a_i, \xtldt \rangle \\
&+ \EE [ \phi_i^*(\yitp) ] - \phi_i^*(\yit) + \frac{1}{n} ( \phi_i^*(\yit) - \phi_i^*(\yiopt) ) \\
&+ \frac{\delta}{4n} \left( \langle a_i, \xtldt - \xopt \rangle - \frac{(\yitld - \yit)}{\sigma}  \right)^2,
\end{align*}
and summing over $i=1,\ldots,n$, we get
\begin{align*}
\left(  \frac{1}{2 \sigma} + \frac{(n-1)\gamma}{4n} \right) \| \yt - \yopt \|^2 \geq& 
\left(  \frac{1}{2 \sigma} + \frac{\gamma}{4}  \right) \EE [ \| \ytp - \yopt \|^2 ]
+ \frac{\EE [ \| \ytp - \yt \|^2 ]}{2 \sigma}  \\
&+ \phi_k^*(\yktp) - \phi_k^*(\ykt) + \frac{1}{n} \sum_{i=1}^n ( \phi_i^*(\yit) - \phi_i^*(\yiopt) )\\
&- \left\langle  n(\utp - \ut) + ( \ut - u^*), \xtldt \right\rangle \\
&+ \frac{\delta}{4n} \left\| A (x^* - \xtldt ) + \frac{ ( \ytld - \yt ) } {\sigma} \right\|^2,
\end{align*}
where
\[
\ut = \frac{1}{n} \sum_{i=1}^n \yit a_i, \quad \utp = \frac{1}{n} \sum_{i=1}^n \yitp a_i, \quad {\rm and } \quad u^* = \frac{1}{n} \sum_{i=1}^n \yiopt a_i.
\]

On the other hand, since $\xtp$ minimizes the $\frac{1}{\tau} + \lambda$-strongly convex objective
\[
g(x) + \left \langle \ut + n(\utp - \ut), x \right \rangle + \frac{\| x - \xt \|^2}{2 \tau},
\]
we can apply Lemma \ref{lem:sc-min-3point} with $\rho = 0$ to obtain
\begin{align*}
&g(\xopt) + \langle \ut + n( \utp - \ut), \xopt \rangle + \frac{ \| \xt - \xopt \|^2 }{2 \tau} \\
&\geq g(\xtp) + \langle \ut + n( \utp - \ut), \xtp \rangle + \frac{  \| \xtp - \xt \|^2  }{2 \tau} + \left( \frac{1}{2\tau} + \frac{\lambda}{2} \right) \| \xtp - \xopt \|^2 ,
\end{align*}
and re-arranging terms we get
\begin{align*}
\frac{ \| \xt - \xopt \|^2 }{2 \tau} \geq& \left( \frac{1}{2\tau} + \frac{\lambda}{2} \right) \EE [ \| \xtp - \xopt \|^2 ] + \frac{ \EE [ \| \xtp - \xt \|^2 ] }{2 \tau} 
+ \EE [ g(\xtp) - g(\xopt) ] \\
&+ \EE [ \langle \ut + n( \utp - \ut), \xtp - \xopt \rangle ] .
\end{align*}
Also notice that
\begin{align*}
&\Lagr(\xtp,\yopt) - \Lagr(\xopt,\yopt) + n( \Lagr(\xopt,\yopt) - \Lagr(\xopt,\ytp) ) - (n-1) (\Lagr(\xopt,\yopt) - \Lagr(\xopt,\yt) ) \\
&= \frac{1}{n} \sum_{i=1}^n ( \phi_i^*(\yit) - \phi_i^*(\yopt) )
+ (\phi_k^*(\yktp) - \phi_k^*(\ykt) ) + g(\xtp) - g(\xopt) \\
&+ \langle \uopt, \xtp \rangle - \langle \ut, \xopt \rangle + n \langle \ut - \utp, \xopt \rangle.
\end{align*}
Combining everything together, we have
\begin{align*}
&\frac{ \| \xt - \xopt \|^2 }{2 \tau} + \left(  \frac{1}{2 \sigma} + \frac{(n-1)\gamma}{4n} \right) \| \yt - \yopt \|^2  + (n-1) (\Lagr(\xopt,\yopt) - \Lagr(\xopt,\yt) ) \\
&\geq \left( \frac{1}{2\tau} + \frac{\lambda}{2} \right) \EE [ \| \xtp - \xopt \|^2 ]
+ \left(  \frac{1}{2 \sigma} + \frac{\gamma}{4}  \right) \EE [ \| \ytp - \yopt \|^2 ] + 
\frac{ \EE [ \| \xtp - \xt \|^2 ] }{2 \tau}  + \frac{\EE [ \| \ytp - \yt \|^2 ]}{2 \sigma}\\
&+ \EE [ \Lagr(\xtp,\yopt) - \Lagr(\xopt,\yopt) + n( \Lagr(\xopt,\yopt) - \Lagr(\xopt,\ytp) ) ]   \\
&+ \EE [ \langle  \ut - u^* + n( \utp - \ut), \xtp - \bar x^t \rangle] + \frac{\delta}{4n} \left\| A (x^* - \xtldt ) + \frac{ ( \ytld  - \yt ) } {\sigma} \right\|^2 .
\end{align*}
Next we notice that
\begin{align*}
\frac{\delta}{4n} \left\| A (x^* - \xtldt ) + \frac{n ( \EE[ \ytp ] - \yt ) } {\sigma} \right\|^2  =& \frac{\delta}{4n} \left\| A (x^* - \xt) - \theta A ( \xt - \xtm ) + \frac{ ( \ytld - \yt ) } {\sigma} \right\|^2 \\
\geq& \left( 1 - \frac{1}{\alpha} \right) \frac{\delta}{4n} \left\| A (\xopt - \xt )\right\|^2 \\
&- (\alpha - 1) \frac{\delta}{4n} \left\| \theta A ( \xt - \xtm ) + \frac{ ( \ytld - \yt ) } {\sigma} \right\|^2,
\end{align*}
for some $\alpha>1$ and
\[
\left\| A (x^* - \xt )\right\|^2 \geq \mu^2 \| \xopt - \xt \|^2,
\]
and 
\begin{align*}
\left\| \theta A ( \xt - \xtm ) + \frac{ ( \ytld - \yt ) } {\sigma} \right\|^2 
\geq& -2 \theta^2 \| A ( \xt - \xtm ) \|^2 - \frac{2}{\sigma^2} \| \ytld - \yt \|^2  \\
\geq& - 2 \theta^2 L^2 \| \xt - \xtm \|^2 - \frac{2 n}{\sigma^2} \EE [ \| \ytp - \yt \|^2 ] .
\end{align*}
We follow the same reasoning as in the standard SPDC analysis,
\begin{align*}
& \langle  \ut - u^* + n(\utp - \ut), \xtp - \xtldt \rangle = \frac{ (\ytp - \yopt )^T A (\xtp - \xt) }{n} - \frac{ \theta (\yt - \yopt )^T A (\xt - \xtm) }{n} \\
&+ \frac{(n-1)}{n} (\ytp - \yt)^T A (\xtp - \xt) - \theta (\ytp - \yt)^T A (\xt - \xtm),
\end{align*}
and using Cauchy-Schwartz inequality, we have
\begin{align*}
| (\ytp - \yt)^T A (\xt - \xtm) | \leq& 
\frac{ \| (\ytp - \yt)^T A \|^2 }{ 1/(2\tau) } + 
\frac{ \| \xt - \xtm \|^2 }{ 8 \tau} \\
\leq& \frac{ \| \ytp - \yt \|^2 }{ 1/(2\tau R^2) },
\end{align*}
and 
\begin{align*}
| (\ytp - \yt)^T A (\xtp - \xt) | \leq& 
\frac{ \| (\ytp - \yt)^T A \|^2 }{ 1/(2\tau) } + 
\frac{ \| \xtp - \xt \|^2 }{ 8 \tau} \\
\leq& \frac{ \| \ytp - \yt \|^2 }{ 1/(2\tau R^2) }.
\end{align*}
Thus we get
\begin{align*}
& \langle  \ut - u^* + n(\utp - \ut), \xtp - \xtldt \rangle  \geq 
\frac{ (\ytp - \yopt )^T A (\xtp - \xt) }{n} - \frac{ \theta (\yt - \yopt )^T A (\xt - \xtm) }{n} \\
& - \frac{  \| \ytp - \yt \|^2  }{1/(4\tau R^2)} - \frac{ \| \xtp - \xt \|^2 }{8 \tau}
- \frac{ \theta \| \xt - \xtm \|^2 }{ 8 \tau}  .
\end{align*}
Putting everything together, we have 
\begin{align*}
& \left( \frac{1}{2 \tau} - \frac{ (1 - 1/\alpha) \delta \mu^2}{4n}  \right) \| \xt - \xopt \|^2  +  \left(  \frac{1}{2 \sigma} + \frac{(n-1)\gamma}{4n} \right) \| \yt - \yopt \|^2  +  \theta ( \Lagr(\xt,\yopt) - \Lagr(\xopt,\yopt) ) \\
&+ (n-1) (\Lagr(\xopt,\yopt) - \Lagr(\xopt,\yt) ) + \theta \left( \frac{1}{8 \tau} + \frac{(\alpha - 1) \theta \delta L^2}{2n} \right) \| \xt - x^{t-1} \|^2 + \frac{ \theta (\yt - \yopt )^T A (\xt - \xtm) }{n} \\
&\geq \left( \frac{1}{2\tau} + \frac{\lambda}{2} \right) \EE [ \| \xtp - \xopt \|^2 ] 
+ \left(  \frac{1}{2 \sigma} + \frac{\gamma}{4}  \right) \EE [ \| \ytp - \yopt \|^2 ] + 
\frac{ \EE [ (\ytp - \yopt )^T A (\xtp - \xt) ] }{n}
\\
&+ \EE [ \Lagr(\xtp,\yopt) - \Lagr(\xopt,\yopt) + n( \Lagr(\xopt,\yopt) - \Lagr(\xopt,\ytp) ) ]   \\
& + \left( \frac{1}{2\tau} - \frac{1}{8 \tau} \right) \EE [ \| \xtp - \xt \|^2 ]  
 \\
&+ \left( \frac{1}{2 \sigma} -  4 R^2 \tau - \frac{(\alpha - 1) \delta }{2 \sigma^2} \right) \EE [ \| \ytp - \yt \|^2 ] .
\end{align*}
If we choose the parameters as
\[
\alpha = \frac{\sigma}{4 \delta } + 1, \quad \sigma \tau = \frac{1}{16 R^2},
\]
then we know
\[
\frac{1}{2 \sigma} -  4 R^2 \tau - \frac{(\alpha - 1) \delta }{2 \sigma^2} = \frac{1}{2\sigma} - \frac{1}{4 \sigma} - \frac{1}{ 8 \sigma} > 0,
\]
and 
\[
\frac{(\alpha - 1) \theta \delta L^2}{2n} \leq \frac{\sigma L^2}{8 n^2} \leq \frac{\sigma R^2}{8} \leq \frac{1}{256 \tau},
\]
thus
\[
\frac{1}{8 \tau} + \frac{(\alpha - 1) \theta \delta L^2}{2n} \leq  \frac{3}{8 \tau}.
\]
In addition, we have
\[
1 - \frac{1}{\alpha} = \frac{\sigma }{\sigma + 4 \delta}.
\]
Finally we obtain
\begin{align*}
& \left( \frac{1}{2 \tau} - \frac{\sigma \delta \mu^2 }{ 4 n(\sigma + 4 \delta)}  \right) \| \xt - \xopt \|^2  +  \left(  \frac{1}{2 \sigma} + \frac{(n-1)\gamma}{4n} \right) \| \yt - \yopt \|^2  +  \theta ( \Lagr(\xt,\yopt) - \Lagr(\xopt,\yopt) ) \\
&+ (n-1) (\Lagr(\xopt,\yopt) - \Lagr(\xopt,\yt) ) + \theta \cdot \frac{3}{8 \tau} \| \xt - \xtm \|^2 + \frac{ \theta (\yt - \yopt )^T A (\xt - \xtm) }{n} \\
&\geq \left( \frac{1}{2\tau} + \frac{\lambda}{2} \right) \EE [ \| \xtp - \xopt \|^2 ] 
+ \left(  \frac{1}{2 \sigma} + \frac{\gamma}{4}  \right) \EE [ \| \ytp - \yopt \|^2 ] + 
\frac{ \EE [ (\ytp - \yopt )^T A (\xtp - \xt) ] }{n}
\\
&+ \EE [ \Lagr(\xtp,\yopt) - \Lagr(\xopt,\yopt) + n( \Lagr(\xopt,\yopt) - \Lagr(\xopt,\ytp) ) ]   + \frac{3}{8 \tau} \EE [ \| \xtp - \xt \|^2 ] .
\end{align*}
Now we can define $\theta_x$ and $\theta_y$ as the ratios between the coefficients in the $x$-distance and $y$-distance terms, and let 
$\theta=\max\{\theta_x,\theta_y\}$ as before.
Choosing the step-size parameters as
\begin{align*}
\tau = \frac{1}{4R} \sqrt{ \frac{\gamma}{n \lambda + \delta \mu^2} }, \quad  \sigma = \frac{1}{4R} \sqrt{ \frac{n \lambda + \delta \mu^2}{\gamma  }}
\end{align*}
gives the desired result.

\section{Proof of Theorem \ref{thm:DF-SPDC-convergence}}
\label{sec:proof_adf_spdc}

In this setting, for $i$-th coordinate of the dual variables $y$ we choose $h = \phi_i^*$, let
\[
\Bdivi(y_i,y'_i) = \phi_i^*(y_i) - \phi_i^*(y'_i) + 
\langle  (\phi_i^*)'(y'_i), y_i - y'_i \rangle,
\]
and define
\[
\Bdiv(y,y') = \sum_{i=1}^n \Bdivi(y_i,y'_i).
\]
For $i = 1,\ldots,n$, let $\yitld$ be
\[
    \yitld = \arg\min_{y} \left\{ \phi_i^*(y) + \frac{\Bdivi(y,\yit)}{\sigma} - y \langle a_i, \xtldt \rangle \right\}.
\]
Based on the first-order optimality condition, we have
\[
\langle a_i, \xtldt \rangle - \frac{(\phi_i^*)'(\yitld) - (\phi_i^*)'(\yit)}{\sigma} \in (\phi_i^*)'(\yitld).
\]
Also since $\yiopt$ minimizes $\phi_i^*(y) - y \langle a_i, x^* \rangle$, 
we have
\[
\langle a_i, x^* \rangle \in (\phi_i^*)'(\yiopt).
\]
Using Lemma \ref{lem:sc-min-3point} with $\rho = 1/2$, we obtain
\begin{align*}
- \yiopt \langle a_i, \xtldt \rangle + 
\phi^*_i(\yiopt) + \frac{\Bdivi(\yiopt,\yit)}{\sigma}  \geq& \left( \frac{1}{\sigma} + \frac{1}{2} \right) \Bdivi(\yiopt,\yitld) + \phi^*_i(\yitld) - \yitld \langle a_i, \xtldt \rangle \\
&+\frac{\Bdivi(\yitld, \yit) }{ \sigma} + \frac{\delta}{4} ( (\phi_i^*)'(\yitld) - (\phi_i^*)'(\yiopt) )^2,
\end{align*}
and rearranging terms, we get
\begin{align}
\frac{\Bdivi(\yiopt,\yit)}{\sigma}   \geq& \left( \frac{1}{\sigma} + \frac{1}{2} \right) \Bdivi(\yiopt,\yitld) + \frac{\Bdivi(\yitld, \yit) }{ \sigma}  - (\yitld - \yiopt) \langle a_i, \xtldt \rangle +  ( \phi_i^*(\yitld) -  \phi_i^*(\yiopt) ) \nonumber \\
&+ \frac{\delta}{4} ( (\phi_i^*)'(\yitld) - (\phi_i^*)'(\yiopt) )^2.
\label{eqn:y-main-ineq-BD}
\end{align}
With i.i.d.\ random sampling at each iteration, we have the following relations:
\begin{align*}
\EE [ \yitp ] =& \frac{1}{n} \cdot \yitld + \frac{n-1}{n} \cdot \yit, \\
\EE [ \Bdivi(\yitp,\yiopt) ] =& \frac{ \Bdivi(\yitld,\yiopt) }{n} + \frac{ (n-1) \Bdivi(\yit,\yiopt) }{n}, \\
\EE [ \Bdivi(\yitp,\yit) ] =& \frac{\Bdivi(\yitld,\yit)}{n} , \\
\EE [ \phi_i^*(\yitp) ] =& \frac{1}{n}\cdot  \phi_i^*(\yitld) + \frac{n-1}{n} \cdot \phi_i^*(\yit).
\end{align*}
Plugging the above relations into \eqref{eqn:y-main-ineq-BD} and dividing both sides by $n$, we have
\begin{align*}
\left(  \frac{1}{\sigma} + \frac{(n-1)}{2n} \right) \Bdivi(\yit,\yiopt) \geq& 
\left(  \frac{1}{ \sigma} + \frac{1}{2} \right) \Bdivi(\yitp,\yiopt) + \frac{1}{ \sigma} \EE [ \Bdivi(\yitp,\yit) ] \\
&- \left(  \EE [ ( \yitp - \yit) ] + \frac{1}{n} (\yit - \yiopt)  \right) \langle a_i, \xtldt \rangle \\
&+ \EE [ \phi_i^*(\yitp) ] - \phi_i^*(\yit) + \frac{1}{n} ( \phi_i^*(\yit) - \phi_i^*(\yiopt) ) \\
&+ \frac{\delta}{4n} \left( \langle a_i, \xtldt - \xopt \rangle - \frac{((\phi_i^*)'(\yitld) - (\phi_i^*)'(\yit))}{\sigma}  \right)^2,
\end{align*}
and summing over $i=1,\ldots,n$, we get
\begin{align*}
\left(  \frac{1}{\sigma} + \frac{(n-1)}{2n} \right) \Bdiv(\yt,\yopt) \geq& 
\left(  \frac{1}{ \sigma} + \frac{1}{2}  \right) \EE [ \Bdiv(\ytp,\yopt) ]
+ \frac{ \EE [  \Bdiv(\ytp,\yt) ]}{ \sigma}  \\
&+ \phi_k^*(\yktp) - \phi_k^*(\ykt) + \frac{1}{n} \sum_{i=1}^n ( \phi_i^*(\yit) - \phi_i^*(\yiopt) )\\
&- \left\langle  n(\utp - \ut) + ( \ut - u^*), \xtldt \right\rangle \\
&+ \frac{\delta}{4n} \left\| A (x^* - \xtldt ) + \frac{(  \phi^{*'}(\ytld)  - \phi^{*'}(\yt) ) } {\sigma} \right\|^2,
\end{align*}
where $\phi^{*'}(\yt)$ is a $n$-dimensional vector such that the $i$-th coordinate is 
\[
[\phi^{*'}(\yt)]_i = (\phi_i^*)'(\yit),
\]
and
\[
\ut = \frac{1}{n} \sum_{i=1}^n \yit a_i, \quad \utp = \frac{1}{n} \sum_{i=1}^n \yitp a_i, \quad {\rm and } \quad u^* = \frac{1}{n} \sum_{i=1}^n \yiopt a_i.
\]

On the other hand, since $\xtp$ minimizes a $\frac{1}{\tau} + \lambda$-strongly convex objective
\[
g(x) + \left \langle \ut + n(\utp - \ut), x \right \rangle + \frac{\| x - \xt \|^2}{2 \tau},
\]
we can apply Lemma \ref{lem:sc-min-3point} with $\rho = 0$ to obtain 
\begin{align*}
&g(\xopt) + \langle \ut + n( \utp - \ut), \xopt \rangle + \frac{ \| \xt - \xopt \|^2 }{2 \tau} \\
&\geq g(\xtp) + \langle \ut + n( \utp - \ut), \xtp \rangle + \frac{  \| \xtp - \xt \|^2  }{2 \tau} + \left( \frac{1}{2\tau} + \frac{\lambda}{2} \right) \| \xtp - \xopt \|^2 ,
\end{align*}
and rearranging terms, we get
\begin{align*}
\frac{ \| \xt - \xopt \|^2 }{2 \tau} \geq& \left( \frac{1}{2\tau} + \frac{\lambda}{2} \right) \EE [ \| \xtp - \xopt \|^2 ] + \frac{ \EE [ \| \xtp - \xt \|^2 ] }{2 \tau} 
+ \EE [ g(\xtp) - g(\xopt) ] \\
&+ \EE [ \langle \ut + n( \utp - \ut), \xtp - \xopt \rangle ] .
\end{align*}
Notice that
\begin{align*}
&\Lagr(\xtp,\yopt) - \Lagr(\xopt,\yopt) + n( \Lagr(\xopt,\yopt) - \Lagr(\xopt,\ytp) ) - (n-1) (\Lagr(\xopt,\yopt) - \Lagr(\xopt,\yt) ) \\
&= \frac{1}{n} \sum_{i=1}^n ( \phi_i^*(\yit) - \phi_i^*(\yopt) )
+ (\phi_k^*(\yktp) - \phi_k^*(\ykt) ) + g(\xtp) - g(\xopt) \\
&+ \langle \uopt, \xtp \rangle - \langle \ut, \xopt \rangle + n \langle \ut - \utp, \xopt \rangle,
\end{align*}
so
\begin{align*}
&\frac{ \| \xt - \xopt \|^2 }{2 \tau} + \left(  \frac{1}{\sigma} + \frac{(n-1)}{2n} \right) \Bdiv(\yt,\yopt)  + (n-1) (\Lagr(\xopt,\yopt) - \Lagr(\xopt,\yt) ) \\
&\geq \left( \frac{1}{2\tau} + \frac{\lambda}{2} \right) \EE [ \| \xtp - \xopt \|^2 ]
+ \left(  \frac{1}{ \sigma} + \frac{1}{2}  \right) \EE [ \Bdiv(\ytp,\yopt) ] + 
\frac{ \EE [ \| \xtp - \xt \|^2 ] }{2 \tau}  + \frac{ \EE [  \Bdiv(\ytp,\yt) ]}{ \sigma}\\
&+ \EE [ \Lagr(\xtp,\yopt) - \Lagr(\xopt,\yopt) + n( \Lagr(\xopt,\yopt) - \Lagr(\xopt,\ytp) ) ]   \\
&+ \EE [ \langle  \ut - u^* + n( \utp - \ut), \xtp - \bar x^t \rangle] + \frac{\delta}{4n} \left\| A (x^* - \xtldt ) + \frac{ (  \phi^{*'}(\ytld)  - \phi^{*'}(\yt) ) } {\sigma} \right\|^2.
\end{align*}
Next, we have
\begin{align*}
\frac{\delta}{4n} \left\| A (x^* - \xtldt ) + \frac{(  \phi^{*'}(\ytld)  - \phi^{*'}(\yt) ) } {\sigma} \right\|^2  =& \frac{\delta}{4n} \left\| A (x^* - \xt) - \theta A ( \xt - \xtm ) + \frac{ (  \phi^{*'}(\ytld)  - \phi^{*'}(\yt) ) } {\sigma} \right\|^2 \\
\geq& \left( 1 - \frac{1}{\alpha} \right) \frac{\delta}{4n} \left\| A (\xopt - \xt )\right\|^2 \\
&- (\alpha - 1) \frac{\delta}{4n} \left\| \theta A ( \xt - \xtm ) + \frac{ ( \phi^{*'}(\ytld)  - \phi^{*'}(\yt) ) } {\sigma} \right\|^2,
\end{align*}
for any $\alpha>1$ and
\[
\left\| A (x^* - \xt )\right\|^2 \geq \mu^2 \| \xopt - \xt \|^2,
\]
and 
\begin{align*}
\left\| \theta A ( \xt - \xtm ) + \frac{ ( \phi^{*'}(\ytld)  - \phi^{*'}(\yt) ) } {\sigma} \right\|^2 
\geq& -2 \theta^2 \| A ( \xt - \xtm ) \|^2 - \frac{2}{\sigma^2}  \| \phi^{*'}(\ytld) - \phi^{*'}(\yt) \|^2 ] \\
\geq& - 2 \theta^2 L^2 \| \xt - \xtm \|^2 - \frac{2 n}{\sigma^2} \EE [ \| \phi^{*'}(\ytp) - \phi^{*'}(\yt) \|^2 ].
\end{align*}
Following the same reasoning as in the standard SPDC analysis, we have
\begin{align*}
& \langle  \ut - u^* + n(\utp - \ut), \xtp - \xtldt \rangle = \frac{ (\ytp - \yopt )^T A (\xtp - \xt) }{n} - \frac{ \theta (\yt - \yopt )^T A (\xt - \xtm) }{n} \\
&+ \frac{(n-1)}{n} (\ytp - \yt)^T A (\xtp - \xt) - \theta (\ytp - \yt)^T A (\xt - \xtm),
\end{align*}
and using Cauchy-Schwartz inequality, we have
\begin{align*}
| (\ytp - \yt)^T A (\xt - \xtm) | \leq& 
\frac{ \| (\ytp - \yt)^T A \|^2 }{ 1/(2\tau) } + 
\frac{ \| \xt - \xtm \|^2 }{ 8 \tau} \\
\leq& \frac{ \| \ytp - \yt \|^2 }{ 1/(2\tau R^2) },
\end{align*}
and 
\begin{align*}
| (\ytp - \yt)^T A (\xtp - \xt) | \leq& 
\frac{ \| (\ytp - \yt)^T A \|^2 }{ 1/(2\tau) } + 
\frac{ \| \xtp - \xt \|^2 }{ 8 \tau} \\
\leq& \frac{ \| \ytp - \yt \|^2 }{ 1/(2\tau R^2) }.
\end{align*}
Thus we get
\begin{align*}
& \langle  \ut - u^* + n(\utp - \ut), \xtp - \xtldt \rangle  \geq 
\frac{ (\ytp - \yopt )^T A (\xtp - \xt) }{n} - \frac{ \theta (\yt - \yopt )^T A (\xt - \xtm) }{n} \\
& - \frac{  \| \ytp - \yt \|^2  }{1/(4\tau R^2)} - \frac{ \| \xtp - \xt \|^2 }{8 \tau}
- \frac{ \theta \| \xt - \xtm \|^2 }{ 8 \tau}.
\end{align*}
Also we can lower bound the term $\Bdiv(\ytp,\yt)$ using Lemma \ref{lem:sc-min-3point} with $\rho = 1/2$:
\begin{align*}
\Bdiv(\ytp,\yt) =& \sum_{i=1}^n \left( \phi_i^*(\yitp) - \phi_i^*(\yit) - \langle (\phi_i^*)'(\yit) , \yitp - \yit \rangle \right) \\
\geq& \sum_{i=1}^n \left( \frac{\gamma}{2} (\yitp - \yit)^2 + \frac{\delta}{2} ( (\phi_i^*)'(\yitp) - (\phi_i^*)'(\yit) )^2 \right) \\
=& \frac{\gamma}{2} \| \ytp - \yt \|^2 + \frac{\delta}{2} \| \phi^{*'}(\ytp) - \phi^{*'}(\yt) \|^2.
\end{align*}
Combining everything above together, we have 
\begin{align*}
& \left( \frac{1}{2 \tau} - \frac{ (1 - 1/\alpha) \delta \mu^2}{4n}  \right) \| \xt - \xopt \|^2  +  \left(  \frac{1}{ \sigma} + \frac{(n-1)}{2n} \right) \Bdiv(\yt,\yopt)  +  \theta ( \Lagr(\xt,\yopt) - \Lagr(\xopt,\yopt) ) \\
&+ (n-1) (\Lagr(\xopt,\yopt) - \Lagr(\xopt,\yt) ) + \theta \left( \frac{1}{8 \tau} + \frac{(\alpha - 1) \theta \delta L^2}{2n} \right) \| \xt - x^{t-1} \|^2 + \frac{ \theta (\yt - \yopt )^T A (\xt - \xtm) }{n} \\
&\geq \left( \frac{1}{2\tau} + \frac{\lambda}{2} \right) \EE [ \| \xtp - \xopt \|^2 ] 
+ \left(  \frac{1}{ \sigma} + \frac{1}{2}  \right) \EE [ \Bdiv(\ytp,\yopt) ] + 
\frac{ \EE [ (\ytp - \yopt )^T A (\xtp - \xt) ] }{n}
\\
&+ \EE [ \Lagr(\xtp,\yopt) - \Lagr(\xopt,\yopt) + n( \Lagr(\xopt,\yopt) - \Lagr(\xopt,\ytp) ) ]   \\
& + \left( \frac{1}{2\tau} - \frac{1}{8 \tau} \right) \EE [ \| \xtp - \xt \|^2 ]  + \left( \frac{\gamma}{2 \sigma} -  4 R^2 \tau  \right) \EE [ \| \ytp - \yt \|^2 ] \\
& + \left( \frac{\delta}{2 \sigma} - \frac{(\alpha - 1) \delta }{2 \sigma^2} \right)  \EE [ \| \phi^{*'}(\ytp) - \phi^{*'}(\yt) \|^2 ] .
\end{align*}
If we choose the parameters as
\[
\alpha = \frac{\sigma}{4 } + 1, \quad \sigma \tau = \frac{\gamma}{16 R^2},
\]
then we know
\[
\frac{\gamma}{2 \sigma} -  4 R^2 \tau = \frac{\gamma}{2 \sigma} - \frac{\gamma}{4 \sigma}  > 0,
\]
and
\[
\frac{\delta}{2 \sigma} - \frac{(\alpha - 1) \delta }{2 \sigma^2} = \frac{\delta}{2 \sigma} - \frac{\delta}{8 \sigma} > 0
\]
and 
\[
\frac{(\alpha - 1) \theta \delta L^2}{2n} \leq \frac{\sigma \delta L^2}{8 n^2} \leq \frac{\delta \sigma R^2}{8} \leq \frac{\delta \gamma}{256 \tau} \leq \frac{1}{256 \tau},
\]
thus
\[
\frac{1}{8 \tau} + \frac{(\alpha - 1) \theta \delta L^2}{2n} \leq  \frac{3}{8 \tau}.
\]
In addition, we have
\[
1 - \frac{1}{\alpha} = \frac{\sigma }{\sigma + 4}.
\]
Finally we obtain
\begin{align*}
& \left( \frac{1}{2 \tau} - \frac{\sigma \delta \mu^2 }{ 4 n(\sigma + 4)}  \right) \| \xt - \xopt \|^2  +  \left(  \frac{1}{\sigma} + \frac{(n-1)}{2n} \right) \Bdiv(\yt,\yopt)  +  \theta ( \Lagr(\xt,\yopt) - \Lagr(\xopt,\yopt) ) \\
&+ (n-1) (\Lagr(\xopt,\yopt) - \Lagr(\xopt,\yt) ) + \theta \cdot \frac{3}{8 \tau} \| \xt - \xtm \|^2 + \frac{ \theta (\yt - \yopt )^T A (\xt - \xtm) }{n} \\
&\geq \left( \frac{1}{2\tau} + \frac{\lambda}{2} \right) \EE [ \| \xtp - \xopt \|^2 ] 
+ \left(  \frac{1}{\sigma} + \frac{1}{2}  \right) \EE [ \| \ytp - \yopt \|^2 ] + 
\frac{ \EE [ (\ytp - \yopt )^T A (\xtp - \xt) ] }{n}
\\
&+ \EE [ \Lagr(\xtp,\yopt) - \Lagr(\xopt,\yopt) + n( \Lagr(\xopt,\yopt) - \Lagr(\xopt,\ytp) ) ]   + \frac{3}{8 \tau} \EE [ \| \xtp - \xt \|^2 ] .
\end{align*}
As before, we can define $\theta_x$ and $\theta_y$ as the ratios between the coefficients in the $x$-distance and $y$-distance terms, and let
$\theta=\max\{\theta_x,\theta_y\}$.
Then choosing the step-size parameters as
\begin{align*}
\tau = \frac{1}{4R} \sqrt{ \frac{\gamma}{n \lambda + \delta \mu^2} }, \quad  \sigma = \frac{1}{4R} \sqrt{ \gamma(n \lambda + \delta \mu^2)}
\end{align*}
gives the desired result.

\end{document}